\title[On string functions of the generalized parafermionic theories]{On string functions of the generalized\\ parafermionic theories, mock theta functions, and\\false theta functions}
\author{Nikolay E. Borozenets}
\address{Leonhard Euler International Mathematical Institute, Saint Petersburg State University, Saint Petersburg, Russia, 199178}
\email{nikolayborozenets.spbumcs@gmail.com}
\author{Eric T. Mortenson}
\address{Department of Mathematics and Computer Science, Saint Petersburg State University, Saint Petersburg,  Russia, 199178}
\email{etmortenson@gmail.com}
\newcommand\sg{\operatorname{sg}}
\newtheorem{theorem}{Theorem}
\newtheorem{lemma}[theorem]{Lemma}
\newtheorem{corollary}[theorem]{Corollary}
\newtheorem{proposition}[theorem]{Proposition}
\theoremstyle{definition}
\newtheorem{definition}[theorem]{Definition}
\newtheorem{remark}[theorem]{Remark}
\numberwithin{theorem}{section} 
\numberwithin{equation}{section}
\newcommand{\C}{\mathbb{C}}
\newcommand{\R}{\mathbb{R}}
\newcommand{\Z}{\mathbb{Z}}
\newcommand{\im}{\textnormal{Im}}
\newcommand{\sgn}{\operatorname{sgn}}
\newenvironment{psmallmatrix}
  {\left(\begin{smallmatrix}}
  {\end{smallmatrix}\right)}
\dedicatory{For George Andrews and Bruce Berndt in honor of their 85th birthdays}
\begin{document}

\date{20 January 2025}

\subjclass[2020]{Primary 11F37, 11F27, 33D90, 11B65; Secondary 17B67, 81R10, 81T40}

\keywords{string functions,  admissible highest weight representations, parafermionic characters, Hecke-type double-sums, mock theta functions, false theta functions}

\begin{abstract}
Kac and Wakimoto introduced the admissible highest weight representations in order to classify all modular invariant representations of the Kac--Moody algebras. For the Kac--Moody algebra $A_1^{(1)}$ the string functions of admissible representations are allowed to have certain rational levels and were realized by Ahn, Chung, and Tye as the characters of the generalized Fateev--Zamolodchikov parafermionic theories.  For the \text{$1/2$-level} string functions, we present their mixed mock modular properties as well as elegant mock theta conjecture-like identities involving two mock theta functions from Ramanujan's Lost Notebook.  In addition, we demonstrate that the negative level string functions can be evaluated in terms of false theta functions. 
\end{abstract}

\maketitle

\tableofcontents
 
\section{The introduction}
 
Mock modular forms have recently been appearing in various contexts in representation theory and mathematical physics, for example in the relation to Mathieu group $M_{24}$ and Umbral Moonshine \cite{CDH, EOT}, and Kac--Wakimoto's supercharacters \cite{KW01, KW14, KW16adv, KW16izv}. In this paper we present a new appearence of mock modularity in the connection to modular invariant representations of the Kac--Moody algebra $A_1^{(1)}$ and generalized Fateev--Zamolodchikov parafermionic theories.

Following Kac and Wakimoto \cite{KW88} in Section \ref{subsection:admissiblerepresentations} we define the admissible highest weight representations of the Kac--Moody algebra $A_1^{(1)}$ and introduce the string functions of these representations \cite{KW90}. In Section \ref{subsection:calcsrtfunc} we present examples of computations of string functions of integrable representations. In Section \ref{subsection:parafermionth} we overview how Ahn, Chung, and Tye \cite{ACT} realized these string functions as the characters of the generalized parafermionic theories. In Section \ref{subsection:pstrfunchecketype} we define Hecke-type double-sums, and describe how it is possible to write the string functions in terms of them. In Section \ref{subsection:mockthetaappellfunc} we introduce Ramanujan's mock theta functions and Appell functions, which we will show appear as building blocks of the $1/2$-level string functions. In Section \ref{subsection:heckedoubsumappellfunc} we present the formulas by Mortenson and Zwegers \cite{MZ} converting Hecke-type double-sums of positive discriminant to Appell function form.  These double-sum formulas in principle apply to all positive fractional-level string functions, but in certain cases one is able to take advantage of the more symmetric formulas found in \cite[Theorems $1.3$, $1.4$]{HM}. In Section \ref{subsection:heckedoubsumfalsetheta} we introduce the notion of false theta functions, which we will show to be building blocks of the negative fractional-level string functions, and also present the formulas by Mortenson \cite{Mo24A} converting Hecke-type double-sums of negative discriminant to false theta function form.

\subsection{String functions of admissible highest weight representations} \label{subsection:admissiblerepresentations}

Kac and Wakimoto introduced admissible highest weight representations as a conjectural classification of all modular invariant representations \cite{KW88}. This conjecture is still open \cite{KW24}. We will focus on the case of the Kac--Moody algebra $A_1^{(1)}$ for which the Kac--Wakimoto's conjecture is known to be true. We let $p \geq 1$, $p^{\prime} \geq 2$ be coprime integers, and we define the admissible level to be
\begin{equation} \label{equation:admlevel}
N:=\frac{p^{\prime}}{p}-2.
\end{equation}
We then denote by $L(\lambda)$ an admissible $A_{1}^{(1)}$ highest weight representation of highest weight 
\begin{equation}\label{equation:highestweight}
\lambda =  \lambda^{I} - (N+2)\lambda^{F},
\end{equation}
where $\lambda^{I}$ and $\lambda^{F}$ are two integrable weights of levels $p'-2$ and $p-1$ respectively, that is, for $0 \leq \ell \leq p'-2$ and $0 \leq k \leq p-1$ we have
\begin{align*}
\lambda^{I} &= (p'-\ell-2) \Lambda_0 + \ell \Lambda_1,\\
\lambda^{F} &= (p-k-1) \Lambda_0 + k \Lambda_1.
\end{align*}
where $\Lambda_0$ and $\Lambda_1$ are the fundamental weights of $A_{1}^{(1)}$. In this paper we will consider only the case of $k=0$, so that the spin, the coefficient of $\Lambda_1$ in \eqref{equation:highestweight}, is equal to $\ell$ and hence is a positive integer. Note that when $p= 1$, admissible representations reduce to integrable ones \cite{KP}, that is, the fractional part vanishes $\lambda^{F} = 0$.

Let $q := e^{2\pi i \tau}$ with $\im(\tau) >0$ and $z \in \mathbb{C}\backslash \{0\}$. The character for irreducible highest weight representation of admissible highest weight \eqref{equation:highestweight} is
\begin{equation} \label{eq:chdef}
\chi_{\ell}^N(z;q):=\textup{Tr}_{L(\lambda)} \left(q^{s_{\lambda}-d}z^{-\frac{1}{2}J_0 }\right),
\end{equation}
where $d$ is a derivation, $J_0$ is $\operatorname{U}(1)$-charge operator and 
\begin{equation*}
s_{\lambda}:=-\frac{1}{8}+\frac{(\ell+1)^2}{4(N+2)}.
\end{equation*}
By the Weyl--Kac formula it is possible to express the character as
\begin{equation}\label{equation:WK-formula}
\chi_{\ell}^N(z;q)=\frac{\sum_{\sigma=\pm 1}\sigma \Theta_{\sigma (\ell+1),p^{\prime}}(z;q^{p})}
{\sum_{\sigma=\pm 1}\sigma\Theta_{\sigma,2}(z;q)},
\end{equation}
where we denote the theta function as
\begin{equation}\label{equation:SW-thetaDef}
\Theta_{n,m}(z;q):=\sum_{j\in\mathbb{Z}+n/2m}q^{mj^2}z^{-mj}.
\end{equation}
Using \eqref{equation:WK-formula} Kac and Wakimoto showed that the characters form a vector-valued Jacobi form.

Let us denote the energy eigenspaces with respect to $-d$ as
\begin{equation*}
L(\lambda)_{(n)} := \{v \in L(\lambda) \ | \ (-d) v = n v \  \}
\end{equation*}
 for $n \in \Z_{\geq 0}$ and the weight space associated to the weight 
 \begin{equation} \label{eq:arbweight}
 \mu = (N-m)\Lambda_0+m\Lambda_1 
 \end{equation}
 as
\begin{equation*}
L(\lambda)_{[m]} := \{v \in L(\lambda) \ | \ J_0 v = m v \  \}.
\end{equation*}
For admissible highest weight \eqref{equation:highestweight} and arbitrary weight \eqref{eq:arbweight} we define the string function as
\begin{equation*}
c^{\lambda}_{\mu} = c^{N-\ell,\ell}_{N-m,m} = C^{N}_{m,\ell}(q) := q^{s_{\lambda,\mu}} \sum_{n\geq 0} \dim (L(\lambda)_{[m]} \cap L(\lambda)_{(n)}) q^n,
\end{equation*}
where
\begin{equation*}
s_{\lambda,\mu} := s_{\lambda} - \frac{m^2}{4N}.   
\end{equation*}
We also define the additional notation
\begin{equation}
\mathcal{C}_{m,\ell}^{N}(q) := q^{-s_{\lambda,\mu}}C_{m,\ell}^{N}(q) \in \Z[[q]].
\end{equation} 
From the definition we have the Fourier expansion
\begin{equation} \label{equation:fourcoefexp}
\chi_{\ell}^N (z,q)=\sum_{m\in 2\mathbb{Z}+\ell}
C_{m,\ell}^{N}(q) q^{\frac{m^2}{4N}}z^{-\frac{1}{2}m}.
\end{equation} 
We have the following symmetry of string functions \cite[(3.4), (3.5)]{SW}, \cite[(2.40)]{ACT}
\begin{align*}
C_{m,\ell}^{N}(q) &= C_{-m,\ell}^{N}(q),\\
C_{m,\ell}^{N}(q) &= C_{N-m,N-\ell}^{N}(q).
\end{align*}
For the integral level $N$ we have the periodicity property \cite[(3.5)]{SW}
\begin{equation} \label{eq:inglevelperiod}
C_{m,\ell}^{N}(q) = C_{m+2N,\ell}^{N}(q).
\end{equation}
and hence from \eqref{equation:fourcoefexp} the theta-expansion
\begin{equation}\label{eq:intlevelthetadecomp}
\chi_{\ell}^N(z,q)=\sum_{\substack{0\le m <2N\\m \in 2\Z + \ell}}C_{m,l}^{N}(q)\Theta_{m,N}(z,q).
\end{equation}

\subsection{Computation of string functions}
\label{subsection:calcsrtfunc}
Important problems in the representation theory of Kac--Moody algebras are the modular transformation properties and the explicit calculation of the string functions. Kac and Peterson \cite{KP} gave several examples of elegant evaluations in terms of theta functions of string functions of integrable highest weight representations of $A_1^{(1)}$. Recall the Dedekind eta-function,
\begin{equation*} 
\eta(q)=\eta(\tau) :=q^{1/24}\prod_{n\ge 1}(1-q^{n}),
\end{equation*}
where we use variables $q$ and $\tau$ depending on the contest. For example, Kac and Peterson \cite{KP} showed
\begin{gather*}
c^{01}_{01} = \eta(\tau)^{-1},\\
c^{11}_{11} = \eta(\tau)^{-2}\eta(2\tau),\\
c^{40}_{22} = \eta(\tau)^{-2} \eta(6\tau) \eta(12\tau)^{2},\\
c^{40}_{40} - c^{40}_{04} = \eta(2\tau)^{-2}.
\end{gather*}
Kac and Peterson appeal to modularity to prove the string function identities \cite[p. 220]{KP}. Specifically, they use the transformation law for string functions under the full modular group, together with the calculation of the first few terms in the Fourier expansions of the string functions. As described in Remark \ref{remark:morthicksymfabc} Mortenson \cite{Mo24B} improved the calculations of Kac and Peterson without  appealing to modularity and obtained the complete list of explicit formulas in terms of theta functions for all string functions of levels $N\in\{1,2,3,4\}$. Additional calculations can be found in \cite{MPS}.  Also, string function for the $A_1^{(1)}$ were presented as $q$-hypergeometric expressions by Lepowsky and Primc \cite{LP}. Schilling and Warnaar found fermionic or constant-sign expressions for string functions of admissible highest weight representations of $A_1^{(1)}$ \cite{SW}.

\subsection{String functions as the characters of generalized parafermionic theories}
\label{subsection:parafermionth}

Fateev and Zamolodchikov constructed the $Z_N$ parafermionic (PF) theory  \cite{ZF85}, which can be identified with coset CFT constructed from $\operatorname{SL}(2)_N$ Wess--Zumino--Witten (WZW) theories
\begin{equation} \label{equation:ZNcoset}
Z_N = \operatorname{SL}(2)_N / \operatorname{U}(1).
\end{equation}
Using the coset realization \eqref{equation:ZNcoset} and the construction of admissible highest weight as defined in Section \ref{subsection:admissiblerepresentations}, Ahn, Chung, and Tye \cite{ACT} generalized the $Z_N$ PF algebra from the integer level to the admissible level \eqref{equation:admlevel}. Note that in the case of a fractional-level, such generalized PF theories are non-unitary.

The $\operatorname{SL}(2)_N$ Hilbert spaces $\mathcal{H}_{N,\ell}$ of spin $\ell$ can be decomposed into Hilbert spaces of states of fixed $\operatorname{U(1)}$-charge $m \in 2\Z + \ell$ as
\begin{equation} \label{equation:SLintoVir}
\mathcal{H}_{N,\ell} = \bigoplus_{m \in 2\Z + \ell} \mathcal{H}_{N,\ell,m}.
\end{equation}
Ahn, Chung, and Tye \cite{ACT} factored $\mathcal{H}_{N,\ell,m}$ into PF Hilbert space and that of the boson,
\begin{equation} \label{equation:VirIntoPFandB}
\mathcal{H}_{N,\ell,m} = \mathcal{H}_{N,\ell,m}^{\operatorname{PF}} \otimes  \mathcal{H}_{N,m}^{\operatorname{b}}.
\end{equation}
Let us denote the PF character by $e_{m,\ell}^{N}(q)$,  from \eqref{equation:SLintoVir} and \eqref{equation:VirIntoPFandB} we get
\begin{equation} \label{equation:string-def}
\chi_{\ell}^N (z,q)=\sum_{m\in 2\mathbb{Z}+\ell}
e_{m,\ell}^{N}(q) \cdot \frac{q^{\frac{m^2}{4N}}z^{-\frac{1}{2}m}}{\eta(q)}.
\end{equation} 
From \eqref{equation:fourcoefexp} we see that
\begin{equation*}
e_{m,\ell}^{N}(q) = \eta(q) C_{m,\ell}^{N}(q).   
\end{equation*}
Ahn, Chung, and Tye considered the PF characters as the building blocks of the characters of the diagonal $A_1^{(1)}$ coset theories of rational levels as well as certain superconformal field theories, for details see \cite{ACT}.

\subsection{String functions in Hecke-type double-sum form}
\label{subsection:pstrfunchecketype}

We recall the $q$-Pochhammer notation
\begin{equation*}
(x)_n=(x;q)_n:=\prod_{i=0}^{n-1}(1-q^ix), \ \ (x)_{\infty}=(x;q)_{\infty}:=\prod_{i\ge 0}(1-q^ix),
\end{equation*}
and the theta function
\begin{equation}\label{equation:JTPid}
j(x;q):=(x)_{\infty}(q/x)_{\infty}(q)_{\infty}=\sum_{n=-\infty}^{\infty}(-1)^nq^{\binom{n}{2}}x^n,
\end{equation}
where the last equality is the Jacobi triple product identity.  One notes that the two definitions for theta functions are equivalent via the identity
\begin{equation*}
\Theta_{n,m}(z;q)=z^{-\frac{n}{2}}q^{\frac{n^2}{4m}}j\left ( -q^{n+m}z^{-m};q^{2m}\right  ). 
\end{equation*}

Using the classical partial fraction expansion for the reciprocal of Jacobi's theta product,
\begin{equation*}
\frac{1}{j(z;q)}=\frac{1}{(q;q)_{\infty}^3}\sum_{n\in\mathbb{Z}}\frac{(-1)^nq^{\binom{n+1}{2}}}{1-q^{n}z}
\end{equation*}
and the Weyl--Kac formula \eqref{equation:WK-formula}, one is able to extract \cite[(3.8)]{SW}:
\begin{align}
\begin{split}\label{equation:fractional-string}
\mathcal{C}_{m,\ell}^{N}(q)&=\frac{1}{(q)_{\infty}^3}
 \left \{ \sum_{\substack{i\ge 0 \\ j\ge 0}}-\sum_{\substack{i< 0 \\ j< 0}}\right \}
 (-1)^{i}q^{\frac{1}{2}i(i+m)+p^{\prime}j(pj+i)+\frac{1}{2}(\ell+1)(2pj+i)}\\
&\qquad -\frac{1}{(q)_{\infty}^3}
 \left \{ \sum_{\substack{i\ge 0 \\ j > 0}}-\sum_{\substack{i< 0 \\ j\le 0}}\right \}
 (-1)^{i}q^{\frac{1}{2}i(i+m)+p^{\prime}j(pj+i)-\frac{1}{2}(\ell+1)(2pj+i)}.
 \end{split}
 \end{align}
For similar derivations see also \cite[Section 2.4]{ACT} and \cite[Proposition 3]{L}. We recall the definition for Hecke-type double-sums.
\begin{definition} \label{definition:fabc-def}  Let $x,y\in\mathbb{C} \backslash \{0\}$. Then
\begin{equation} \label{equation:fabc-def2}
f_{a,b,c}(x,y;q):=\left( \sum_{r,s\ge 0 }-\sum_{r,s<0}\right)(-1)^{r+s}x^ry^sq^{a\binom{r}{2}+brs+c\binom{s}{2}},
\end{equation}
where we define the discriminant to be
\begin{equation*}
D:=b^2-ac.
\end{equation*}
\end{definition}

We can rewrite \eqref{equation:fractional-string} in terms of Hecke-type double-sums.

\begin{proposition} \label{proposition:modStringFnHeckeForm} Let $p'\geq 2$, $p\geq 1$ be coprime integers, $0\leq \ell \leq p'-2$ and $m\in 2\Z +\ell$. We have
\begin{equation*}
\mathcal{C}_{m,\ell}^{N}(q)
 =\frac{1}{(q)_{\infty}^3}\left ( f_{1,p^{\prime},2pp^{\prime}}(q^{1+\frac{m+\ell}{2}},-q^{p(p^{\prime}+\ell+1)};q)
 -f_{1,p^{\prime},2pp^{\prime}}(q^{\frac{m-\ell}{2}},-q^{p(p^{\prime}-(\ell+1))};q)\right).
\end{equation*}      
\end{proposition}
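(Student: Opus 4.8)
The plan is to read the identity off the partial-fraction expansion \eqref{equation:fractional-string} by matching each of the two octant double-sums there with one of the two Hecke-type double-sums, after a linear change of summation indices. Write the right-hand side of \eqref{equation:fractional-string} as $\tfrac{1}{(q)_{\infty}^3}\bigl(S_1-S_2\bigr)$, where
\[
S_1:=\Bigl(\sum_{i,j\ge 0}-\sum_{i,j<0}\Bigr)(-1)^{i}q^{\frac12 i(i+m)+p'j(pj+i)+\frac12(\ell+1)(2pj+i)}
\]
and $S_2$ is the analogous sum over the region $\{i\ge0,\,j>0\}-\{i<0,\,j\le0\}$ with $\ell+1$ replaced by $-(\ell+1)$ in the exponent. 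Using $\tfrac12 i(i+m)=\binom{i}{2}+\tfrac12 i+\tfrac12 im$ and $p'j(pj+i)=2pp'\binom{j}{2}+pp'j+p'ij$, the exponent of $S_1$ collapses to
\[
\binom{i}{2}+p'ij+2pp'\binom{j}{2}+\Bigl(1+\tfrac{m+\ell}{2}\Bigr)i+p(p'+\ell+1)j ,
\]
with the obvious modification for $S_2$ (the $i$-coefficient becomes $\tfrac{m-\ell}{2}$, the $j$-coefficient becomes $p(p'-(\ell+1))$). Writing $(-1)^{i}=(-1)^{i+j}(-1)^{j}$ and $q^{p(p'+\ell+1)j}=(-1)^{j}\bigl(-q^{p(p'+\ell+1)}\bigr)^{j}$ matches the summand of $S_1$, term by term with $(r,s)=(i,j)$, with that of $f_{1,p',2pp'}\bigl(q^{1+\frac{m+\ell}{2}},-q^{p(p'+\ell+1)};q\bigr)$; since the region $\{i,j\ge0\}-\{i,j<0\}$ is exactly the one of Definition~\ref{definition:fabc-def}, this already gives $S_1=f_{1,p',2pp'}\bigl(q^{1+\frac{m+\ell}{2}},-q^{p(p'+\ell+1)};q\bigr)$.

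The one step that is not purely mechanical concerns $S_2$: its summation region $\{i\ge0,\,j>0\}-\{i<0,\,j\le0\}$ differs from the region of Definition~\ref{definition:fabc-def} exactly along the line $j=0$. Splitting that line off, $S_2=\bigl(\sum_{i,j\ge 0}-\sum_{i,j<0}\bigr)(\cdots)-\sum_{i\in\Z}(-1)^{i}q^{\frac12 i(i+m)-\frac12(\ell+1)i}$. Since $m\in2\Z+\ell$, the number $k:=\tfrac{m-\ell}{2}$ is an integer and $\tfrac12 i(i+m)-\tfrac12(\ell+1)i=\binom{i}{2}+ki$, so by the Jacobi triple product \eqref{equation:JTPid} the correction term equals $j(q^{k};q)$, which vanishes for every $k\in\Z$ (e.g. because $j(1;q)=0$ and $j(q^{k};q)=(-1)^{k}q^{-\binom{k}{2}}j(1;q)$). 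Hence $S_2$ may be taken over the region $\{i,j\ge0\}-\{i,j<0\}$, and the same change of variables gives $S_2=f_{1,p',2pp'}\bigl(q^{\frac{m-\ell}{2}},-q^{p(p'-(\ell+1))};q\bigr)$. Substituting both evaluations into $\mathcal{C}_{m,\ell}^{N}(q)=\tfrac{1}{(q)_{\infty}^3}(S_1-S_2)$ proves the proposition.

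I expect the main obstacle to be exactly this region mismatch for $S_2$: one has to recognize the $j=0$ contribution as the theta function $j(q^{k};q)$ and note that it vanishes identically because $k\in\Z$, which is precisely where the hypothesis $m\in2\Z+\ell$ is used. Everything else — expanding the quadratic exponents and tracking the sign factors — is routine bookkeeping.
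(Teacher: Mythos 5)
Your proposal is correct and follows essentially the same route as the paper: the first octant sum is matched directly with $f_{1,p',2pp'}(q^{1+\frac{m+\ell}{2}},-q^{p(p'+\ell+1)};q)$ after expanding the quadratic exponent, and the second is reduced to the standard region by splitting off the $j=0$ line, whose contribution is $j(q^{\frac{1}{2}(m-\ell)};q)=0$ since $\frac{1}{2}(m-\ell)\in\Z$. This is precisely the argument given in Section \ref{section:genFractionalLevel}.
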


\begin{remark} \label{remark:shortintstrfunc} In the case of positive integer level $N>0$, we have the compact form \cite[Example $1.3$]{HM}
\begin{equation*}
\mathcal{C}_{m,\ell}^{N}(q)=\frac{1}{(q)_{\infty}^3}
f_{1,1+N,1}(q^{1+\tfrac{1}{2}(m+\ell)},q^{1-\tfrac{1}{2}(m-\ell)};q).
\end{equation*}
\end{remark}

\subsection{Mock theta functions and Appell functions} 
\label{subsection:mockthetaappellfunc}

In his deathbed letter to Hardy in 1920, Ramanujan introduced his so-called mock theta functions. Ramanujan offered 17 examples of mock theta functions in this letter, which he grouped by order, a notion that he did not define. Many examples of mock theta functions also appeared in the Ramanujan's ``Lost Notebook'' \cite{Ram} discovered by George Andrews in the library at Trinity College, Cambridge in 1976. As an example we present Ramanujan's classical second-order mock theta functions
\begin{equation} \label{equation:secondordmocktheta}
\mu(q):=\sum_{n\ge 0}\frac{(-1)^nq^{n^2}(q;q^2)_n}{(-q^2;q^2)_{n}^2}, \ 
A(q):=\sum_{n\ge 0}\frac{q^{(n+1)^2}(-q;q^2)_n}{(q;q^2)_{n+1}^2}, 
\end{equation}
both of which appearing in the ``Lost Notebook''  on page 8 of \cite{Ram}.   

Ramanujan presented all his examples in so-called Eulerian form, that is, as $q$-series similar in ``shape'' to basic hypergeometric, also known as $q$-hypergeometric, series. To facilitate studying mock theta functions, it is useful to translate the Eulerian form into other representations: Appell function form, Hecke-type double-sums, and Fourier coefficients of meromorphic Jacobi forms, all of which were unified by Zwegers in his celebrated thesis \cite{Zw02}. The general notion of (mixed) mock modular forms can be found in \cite{DMZ}.

Early attempts to unify Eulerian forms and Appell functions led to identities between the mock theta functions \cite{McInt} and also helped to determine modular properties \cite{Wat}.  Identities found in the ``Lost Notebook,'' such as the mock theta conjectures \cite{H1}, led to expressing mock theta functions as Hecke-type double-sums \cite{A1986}.  

The mock theta (ex-)conjectures \cite{AG, H1} were a collection of ten identities found in the ``Lost Notebook'' that expressed fifth order mock theta functions in terms of a building block Eulerian form and a single quotient of theta functions.  The (ex-)conjecture for the fifth order function $f_0(q)$ reads
\begin{equation} \label{equation:mockthetaconj}
f_0(q):=\sum_{n\ge 0}\frac{q^{n^2}}{(-q)_n}=2-2\sum_{n\ge 0}\frac{q^{10n^2}}{(q^2;q^{10})_{n+1}(q^8;q^{10})_{n}}+\frac{J_{5}J_{5,10}}{J_{1,5}}.
\end{equation}
Many identities in the Lost Notebook express mock theta functions in terms of single quotients of theta functions, with there being no apparent explanation for the phenomenon \cite{Mo24C, MU24}.

Following Hickerson and Mortenson \cite{HM} we introduce the Appell function as
\begin{equation*}
m(x,z;q):=\frac{1}{j(z;q)}\sum_{r\in\Z}\frac{(-1)^rq^{\binom{r}{2}}z^r}{1-q^{r-1}xz}.
\end{equation*}

We will use the following short-hand notation,
\begin{equation} \label{equation:shortnottheta}
J_{a,b}:=j(q^a;q^b), \ \ \overline{J}_{a,b}:=j(-q^a;q^b), \ {\text{and }}J_a:=J_{a,3a}=\prod_{i\ge 1}(1-q^{ai}),
\end{equation}
where $a,b$ are positive integers.

Ramanujan's classical second-order mock theta functions \eqref{equation:secondordmocktheta} can be expressed in terms of Appell functions as
\begin{equation}\label{equation:muAppellForm}
\mu(q)=4m(-q,-1;q^4)-\frac{J_{2,4}^4}{J_1^3}, \ A(q)=-m(q,q^2;q^4).
\end{equation}
See \cite[Section $5$]{HM} for more examples.

\subsection{Hecke-type double-sums and Appell functions}
\label{subsection:heckedoubsumappellfunc}

In \cite{HM, MZ}, Hecke-type double-sums \eqref{equation:fabc-def2} with positive discriminant $D$ are extensively studied.  Expansions are obtained that express the double-sums in terms of theta and Appell functions. We first define the following expression involving Appell functions.

\begin{definition} Let $a,b,$ and $c$ be positive integers with $D:=b^2-ac>0$.  Then
\begin{align} 
m_{a,b,c}(x,y,z_1,z_0;q)
&:=\sum_{t=0}^{a-1}(-y)^tq^{c\binom{t}{2}}j(q^{bt}x;q^a)m\Big (-q^{a\binom{b+1}{2}-c\binom{a+1}{2}-tD}\frac{(-y)^a}{(-x)^b},z_0;q^{aD}\Big ) \label{equation:mabc-def}\\
&\ \ \ \ \ +\sum_{t=0}^{c-1}(-x)^tq^{a\binom{t}{2}}j(q^{bt}y;q^c)m\Big (-q^{c\binom{b+1}{2}-a\binom{c+1}{2}-tD}\frac{(-x)^c}{(-y)^b},z_1;q^{cD}\Big ).\notag
\end{align}
\end{definition}

Mortenson and Zwegers \cite{MZ} obtained a decomposition for the general form (\ref{equation:fabc-def2}) with positive discriminant $D$.  

\begin{theorem}\cite[Corollary 4.2]{MZ}\label{theorem:posDisc} Let $a,b,$ and $c$ be positive integers with $D:=b^2-ac>0$. For generic $x$ and $y$, we have
\begin{align*}
& f_{a,b,c}(x,y;q)=m_{a,b,c}(x,y,-1,-1;q)+\frac{1}{j(-1;q^{aD})j(-1;q^{cD})}\cdot \vartheta_{a,b,c}(x,y;q),
\end{align*}
where
\begin{align*}
&\vartheta_{a,b,c}(x,y;q):=
\sum_{d^*=0}^{b-1}\sum_{e^*=0}^{b-1}q^{a\binom{d-c/2}{2}+b( d-c/2 ) (e+a/2  )+c\binom{e+a/2}{2}}(-x)^{d-c/2}(-y)^{e+a/2}\\
&\cdot\sum_{f=0}^{b-1}q^{ab^2\binom{f}{2}+\big (a(bd+b^2+ce)-ac(b+1)/2 \big )f} (-y)^{af}
\cdot j(-q^{c\big ( ad+be+a(b-1)/2+abf \big )}(-x)^{c};q^{cb^2})\\
&\cdot j(-q^{a\big ( (d+b(b+1)/2+bf)(b^2-ac) +c(a-b)/2\big )}(-x)^{-ac}(-y)^{ab};q^{ab^2D})\\
&\cdot \frac{(q^{bD};q^{bD})_{\infty}^3j(q^{ D(d+e)+ac-b(a+c)/2}(-x)^{b-c}(-y)^{b-a};q^{bD})}
{j(q^{De+a(c-b)/2}(-x)^b(-y)^{-a};q^{bD})j(q^{Dd+c(a-b)/2}(-y)^b(-x)^{-c};q^{bD})}.
\end{align*}
Here $d:=d^*+\{c/2 \}$ and $e:=e^*+\{ a/2\}$, with  $0\le \{\alpha \}<1$ denoting fractional part of $\alpha$.
\end{theorem}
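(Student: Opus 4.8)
The plan is to prove the decomposition by reducing the two–dimensional Hecke-type sum $f_{a,b,c}(x,y;q)$ of \eqref{equation:fabc-def2} to one–dimensional Appell–Lerch sums, with the positive discriminant $D=b^2-ac>0$ guaranteeing that the relevant outer quadratic forms have the correct (positive) leading coefficients so that the Appell function $m(x,z;q)$ is the natural output. The guiding heuristic is that the indefinite shape of the form $a\binom{r}{2}+brs+c\binom{s}{2}$ forces an Appell/mock piece, namely $m_{a,b,c}(x,y,-1,-1;q)$, while the boundary of the summation cone $\{r,s\ge 0\}-\{r,s<0\}$ contributes a pure theta quotient, namely $\vartheta_{a,b,c}(x,y;q)$.

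First I would split the lattice sum by reducing one summation index through residue classes. Writing $s=at'+t$ with $t\in\{0,\dots,a-1\}$ and collecting the inner sum over $r$, the Jacobi triple product \eqref{equation:JTPid} evaluates that inner sum to the theta factor $j(q^{bt}x;q^{a})$, with the accompanying prefactor $(-y)^{t}q^{c\binom{t}{2}}$, leaving an outer single sum over $t'$ scaled by $q^{aD}$; the symmetric splitting with the roles of $(x,a)$ and $(y,c)$ interchanged produces the companion family indexed by $t\in\{0,\dots,c-1\}$. This is exactly the bipartite shape of $m_{a,b,c}$ in \eqref{equation:mabc-def}. The essential subtlety is that the cone restriction prevents $r$ from running over all of $\Z$, so passing from the genuine double sum to (full theta in $r$)$\times$(single sum in $t'$) is \emph{not} an equality: the discrepancy is a finite collection of theta terms supported on the boundary strips, and tracking it precisely is what generates the correction.

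Next I would identify each outer single sum in $t'$ as an Appell function. Because $D>0$ the outer exponent is a true quadratic in $t'$ with leading coefficient $aD$ (respectively $cD$), so after completing against the Lerch-type pole it matches $m\big(-q^{a\binom{b+1}{2}-c\binom{a+1}{2}-tD}(-y)^{a}/(-x)^{b},z_0;q^{aD}\big)$ up to theta corrections, with a free elliptic parameter $z_0$ (respectively $z_1$). Carrying the computation with generic $z_0,z_1$ yields a master identity of the form $f_{a,b,c}(x,y;q)=m_{a,b,c}(x,y,z_1,z_0;q)+\mathcal{R}(z_0,z_1)$, where $\mathcal{R}$ is a sum of theta quotients. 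Since the left–hand side is manifestly independent of $z_0,z_1$, the specialization $z_0=z_1=-1$ is legitimate for generic $x,y$ (the denominators $j(-1;q^{aD})$, $j(-1;q^{cD})$ are nonzero), and it produces $m_{a,b,c}(x,y,-1,-1;q)$ together with an explicit theta term; the change–of–parameter formula
\[
m(x,z_1;q)-m(x,z_0;q)=\frac{z_0\,(q)_{\infty}^{3}\,j(z_1/z_0;q)\,j(xz_0z_1;q)}{j(z_0;q)\,j(z_1;q)\,j(xz_0;q)\,j(xz_1;q)}
\]
is precisely the engine that converts the difference of Appell functions into the quotient of $j$'s appearing in the last line of $\vartheta_{a,b,c}$, while the normalizing factor $1/\big(j(-1;q^{aD})j(-1;q^{cD})\big)$ emerges from evaluating the two Lerch poles at $z=-1$.

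The main obstacle will be the theta-function bookkeeping needed to verify that the accumulated correction is \emph{exactly} $\vartheta_{a,b,c}(x,y;q)$ as displayed, including the nested triple sum over $d^{*},e^{*},f$, the half-integer shifts encoded by the fractional parts $\{c/2\}$ and $\{a/2\}$, and the long list of quadratic exponents and signs. Keeping these consistent demands repeated use of the elliptic transformations $j(qx;q)=-x^{-1}j(x;q)$ and $j(x;q)=j(q/x;q)$, the modulus-splitting relations for $j(x;q)$ that break a theta of nome $q^{b}$ into residues modulo $b$ (this is where the index $f\in\{0,\dots,b-1\}$ and the nome $q^{ab^{2}}$ enter), and careful alignment of the three residue systems indexing the inner sums. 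I expect the residue splitting and the identification of the Appell pieces to be conceptually routine, but the reorganization of the boundary and diagonal terms into the single closed-form theta quotient $\vartheta_{a,b,c}$, with every sign and $q$-power correct, to be the genuine technical heart of the argument.
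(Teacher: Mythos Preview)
This theorem is not proved in the present paper: it is quoted verbatim from \cite[Corollary 4.2]{MZ} and used as a black box. There is therefore no proof in the paper to compare your sketch against.

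That said, your outline is broadly faithful to how such identities are established in \cite{HM} and \cite{MZ}: residue splitting of one summation index modulo $a$ (and the companion splitting modulo $c$), recognition of the outer sums as Appell functions in the nome $q^{aD}$ (respectively $q^{cD}$), insertion of a free elliptic parameter via the change-of-$z$ formula you quote, and then a long theta reorganization to package the boundary terms. You are also right that the serious work is the bookkeeping in the last step. What your sketch does not make precise is \emph{how} the triple sum over $(d^{*},e^{*},f)$ arises: in the actual argument the correction terms are first written as a double sum over residues mod $b$ coming from the theta-splitting identity (\ref{equation:jsplit}) applied to both variables, and the third index $f$ appears when one further dissects a theta of nome $q^{b}$ into residues mod $b$ to align the two Appell pieces; without that specific mechanism, ``tracking the discrepancy'' remains a promise rather than a proof. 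If you intend to supply a self-contained argument, you should either reproduce the relevant computations from \cite{MZ} or cite the result as the paper does.
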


\begin{remark} \label{remark:morthicksymfabc} Hickerson and Mortenson \cite[Theorem $1.3$]{HM} obtained a decomposition for a symmetric form of (\ref{equation:fabc-def2}) with positive discriminant $D$: 
 \begin{align} \label{equation:decompsymfabc}
f_{n,n+p,n}(x,y,q)=m_{n,n+p,n}(x,y,q,-1,-1)+\frac{1}{\overline{J}_{0,np(2n+p)}}\cdot \vartheta_{n,p}(x,y,q),
\end{align}
where $\vartheta_{n,p}(x,y,q)$ is a sum consisting of quotients of theta functions. Using the decomposition \eqref{equation:decompsymfabc} and Remark \ref{remark:shortintstrfunc}, Mortenson \cite{Mo24B} obtained explicit formulas for string functions of integral level $N\in\{1,2,3,4\}$. For example, for $\ell \in \{0,1\}$ and $m \in 2\Z +\ell$ we have
\begin{equation*}
C_{m,\ell}^1(q)=
\frac{1}{(q)_{\infty}^3}\cdot f_{1,2,1}(q^{1+\tfrac{1}{2}(m+\ell)},q^{1-\tfrac{1}{2}(m-\ell)},q)
=\frac{q^{\frac14(m^2-\ell^2)}}{(q)_{\infty}}.
\end{equation*}
\end{remark}
\noindent As we will soon see, there is also another useful symmetric form where $a,c \mid b$ and $D=b^2-ac>0$, see \cite[Theorem $1.4$]{HM}.

\subsection{Hecke-type double-sums and false theta functions} 
\label{subsection:heckedoubsumfalsetheta}
False theta functions are theta functions but with the ``wrong signs'' firstly considered by Rogers \cite{Rog}. Let $r\in\mathbb{Z}$ and define 
\begin{equation*}
\sg(r):=\begin{cases}
1,&\textup{if} \ r\ge0\\
-1,&\textup{if} \ r<0.
\end{cases}
\end{equation*}
Then, if we write the theta function of definition \eqref{equation:JTPid} with incorrect signs we get
\begin{equation} \label{equation:ramanujanpartialtheta}
\sum_{n\in\mathbb{Z}}\sg(n)(-1)^{n}q^{\binom{n+1}{2}}
=\sum_{n=0}^{\infty}(-1)^{n}q^{\binom{n+1}{2}}-\sum_{n=-\infty}^{-1}(-1)^{n}q^{\binom{n+1}{2}}
=2\sum_{n=0}^{\infty}(-1)^{n}q^{\binom{n+1}{2}},
\end{equation}
that is, it turns out to be a partial theta function, i.e. functions resembling \eqref{equation:JTPid} but with the summation over $\Z$ replaced by a partial lattice (e.g. $n \geq n_0$ for $n_0 \in \Z$). Partial theta functions \eqref{equation:ramanujanpartialtheta} were also studied by Ramanujan in his ``Lost Notebook'' \cite{A81}. Nevertheless, in general, partial theta functions are not the same as false theta functions; however, most false theta functions that one encounters are a sum of two specializations of partial theta functions. Also recently, Bringmann and Nazaroglu introduced a framework for modular properties of false theta functions \cite{BN}.

Lastly, Mortenson \cite{Mo24A} obtained a decomposition for the general form (\ref{equation:fabc-def2}) with negative discriminant $D$.

\begin{theorem}\cite[Theorem $1.4$]{Mo24A} \label{theorem:negDisc}
Let $a,b,$ and $c$ be positive integers with $D:=b^2-ac<0$. For generic $x$ and $y$, we have
\begin{align} 
f_{a,b,c}&(x,y;q) \label{equation:mainIdentity}\\
&=\frac{1}{2}\Big (  \sum_{t=0}^{a-1}(-y)^tq^{c\binom{t}{2}}j(q^{bt}x;q^a)
\sum_{r\in\mathbb{Z}}\sg(r) \left (q^{a\binom{b+1}{2}-c\binom{a+1}{2}-tD}\frac{(-y)^a}{(-x)^b}\right )^r
q^{-aD\binom{r+1}{2}} \notag \\
&\ \ \ \ \ +\sum_{t=0}^{c-1}(-x)^tq^{a\binom{t}{2}}j(q^{bt}y;q^c)
\sum_{r\in\mathbb{Z}}\sg(r)\left (q^{c\binom{b+1}{2}-a\binom{c+1}{2}-tD}\frac{(-x)^c}{(-y)^b}\right )^rq^{-cD\binom{r+1}{2}}\Big ).\notag
\end{align}
\end{theorem}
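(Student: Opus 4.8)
The plan is to mirror the structure of the positive-discriminant decomposition (Theorem \ref{theorem:posDisc}) while replacing the Appell functions $m(x,z;q)$ by the ``false'' partial-theta pieces $\sum_{r\in\Z}\sg(r)x^rq^{-\ast\binom{r+1}{2}}$, and in the process one learns why the $z_1,z_0$ parameters drop out and an overall factor $\tfrac12$ appears. First I would carry out the standard ``strip by residues $\bmod\ a$ and $\bmod\ c$'' manipulation on \eqref{equation:fabc-def2}: writing $r=ar'+t$ with $0\le t\le a-1$ in the $r,s\ge 0$ half-sum and similarly $s=cs'+t$ in a second copy of the sum, and using the splitting identity that lets one write $f_{a,b,c}$ as a combination of the two ``column'' sums (this is exactly the combinatorial identity underlying \eqref{equation:mabc-def}, and it holds formally regardless of the sign of $D$). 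After collecting the $q$-powers, each inner sum over $r'$ (resp. $s'$) becomes, up to the theta factor $j(q^{bt}x;q^a)$ (resp. $j(q^{bt}y;q^c)$), a one-variable Hecke-type sum $f_{\ast}(\ast;q^{aD})$ of the shape $\big(\sum_{u\ge0}-\sum_{u<0}\big)(-1)^uw^uq^{(aD)\binom{u}{2}}$ — but with $aD<0$.

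The key observation is that a one-variable Hecke-type sum $\big(\sum_{u\ge0}-\sum_{u<0}\big)(-1)^uw^uq^{-A\binom{u}{2}}$ with $A>0$ collapses: since $-A\binom{u}{2}\to-\infty$ as $|u|\to\infty$, the series does not converge $q$-adically in the naive sense, and the correct (regularized, or ``generic'') evaluation is exactly the false theta $\tfrac12\sum_{u\in\Z}\sg(u)(-1)^uw^uq^{-A\binom{u}{2}}$ — compare the elementary computation \eqref{equation:ramanujanpartialtheta}. Concretely I would prove the single-variable lemma: for $A$ a positive integer and generic $w$,
\begin{equation*}
\Big(\sum_{u\ge0}-\sum_{u<0}\Big)(-1)^uw^uq^{-A\binom{u}{2}}=\frac12\sum_{u\in\Z}\sg(u)(-1)^uw^uq^{-A\binom{u}{2}},
\end{equation*}
by noting $\big(\sum_{u\ge0}-\sum_{u<0}\big)=\tfrac12\sum_{u\in\Z}\sg(u)+\tfrac12\big(\sum_{u\ge0}+\sum_{u<0}\big)-\tfrac12(\text{the }u=0\text{ term})$ adjustments, and that the symmetric full sum $\sum_{u\in\Z}(-1)^uw^uq^{-A\binom{u}{2}}$ is $0$ under the antisymmetrizing substitution $u\mapsto 1-u$ (which fixes $\binom{u}{2}$ and sends $(-1)^uw^u\mapsto -(-1)^uw^u\cdot w^{1-2u}$, giving an $A$-independent functional equation that forces the symmetric combination to vanish as a rational function of $w$, hence the ``missing half'' of the naive double-sum is supplied by exactly the false theta). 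This is where the factor $\tfrac12$ and the disappearance of $z_1,z_0$ come from: in the positive-$D$ case the analogous inner sum is a genuine convergent object equal to an Appell function $m(\cdot,z;q^{\ast})$ plus a theta correction depending on $z$, whereas here the inner object is rigid — there is no $z$-freedom and no theta correction, only the false theta.

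Assembling: substitute the single-variable lemma into each of the two families of column sums produced in the first step, identify the resulting arguments of the false thetas with the exponents $q^{a\binom{b+1}{2}-c\binom{a+1}{2}-tD}(-y)^a/(-x)^b$ and $q^{c\binom{b+1}{2}-a\binom{c+1}{2}-tD}(-x)^c/(-y)^b$ claimed in \eqref{equation:mainIdentity} (this is a bookkeeping check of $q$-exponents, matching $\binom{r}{2},rs,\binom{s}{2}$ against $\binom{u}{2}$ after the base change $q\mapsto q^{aD}$ resp. $q^{cD}$, and noting $-aD=a|D|>0$ so the false theta is genuinely of ``wrong sign'' type), and factor out the theta functions $j(q^{bt}x;q^a)$, $j(q^{bt}y;q^c)$. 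The main obstacle is the regularization/convergence issue: the single-variable Hecke sum with $-A\binom{u}{2}$ is not literally a convergent $q$-series, so one must be careful to run the whole argument at the level of \emph{generic} $x,y$ (rational functions in auxiliary variables, or formal Laurent series in a direction where things do converge) and only then specialize — exactly as ``generic $x$ and $y$'' is stipulated in the statement. Handling that uniformly, and checking that the residue-class splitting in step one is valid in that generic setting, is the delicate part; once the single-variable lemma is established rigorously, the rest is the same column-stripping computation that produces \eqref{equation:mabc-def}.
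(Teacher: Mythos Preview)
The paper does not prove this result; it is quoted from \cite{Mo24A}. Still, your sketch has a genuine gap. Your ``single-variable lemma'' asserts
\[
\Big(\sum_{u\ge0}-\sum_{u<0}\Big)(-1)^uw^uq^{-A\binom{u}{2}}=\frac12\sum_{u\in\Z}\sg(u)(-1)^uw^uq^{-A\binom{u}{2}},
\]
but by the definition of $\sg$ the left side already \emph{equals} $\sum_{u\in\Z}\sg(u)(\cdots)$ without any $\tfrac12$; so as stated the lemma says $X=\tfrac12 X$. Both sides are moreover $q$-adically divergent, and your $u\mapsto 1-u$ substitution only produces a functional equation in $w$, not a vanishing statement for the bilateral sum. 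The convergence worry you flag is therefore an artifact of this ill-posed intermediate object, not of the theorem itself.

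The actual source of the $\tfrac12$ is the two-variable identity
\[
\Big(\sum_{r,s\ge0}-\sum_{r,s<0}\Big)=\frac12\sum_{r,s\in\Z}\sg(r)+\frac12\sum_{r,s\in\Z}\sg(s),
\]
which holds pointwise because $(\sg(r)+\sg(s))/2$ equals $+1,-1,0$ on the three regions. In the $\sg(r)$ half one sums $s$ \emph{freely} over $\Z$ --- convergently --- to obtain $j(q^{br}y;q^c)$; then writing $r=cr'+t$ with $0\le t\le c-1$ (note: mod $c$, not mod $a$ as you wrote) and applying \eqref{equation:j-elliptic} extracts the factor $(-x)^tq^{a\binom{t}{2}}j(q^{bt}y;q^c)$ and leaves in $r'$ a convergent (since $-cD>0$) false theta whose exponents match the second block of \eqref{equation:mainIdentity}. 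The $\sg(s)$ half gives the first block by the symmetric computation. No divergent intermediate sums arise and no regularization is needed; the ``generic $x,y$'' hypothesis is not doing the work you assign to it.
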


\section{Main results}

In Section \ref{subsection:strfuncposadmlevel} we present the evaluation of the $1/2$-level string functions in terms of Ramanujan's second-order mock theta function \eqref{equation:secondordmocktheta}. We also demonstrate mixed mock modular properties on the whole modular group, and we prove mock theta conjecture-like identities for the $1/2$-level string functions. In Section \ref{subsection:strfuncnegadmlevel} we present the evaluation of string functions of negative admissible level in terms of false theta functions and also show the compact false theta function forms for the $(-1/2)$-level and $(-2/3)$-level string functions.

\subsection{The $1/2$-level string functions}
\label{subsection:strfuncposadmlevel}

For positive admissible level $N>0$ as defined in Section \ref{subsection:admissiblerepresentations} we have $p^{\prime}>2p$ and hence the discriminant of Hecke-type double-sums in Proposition \ref{proposition:modStringFnHeckeForm} is positive $D = (p')^2-2pp' > 0$. We can use Theorem \ref{theorem:posDisc} in order to obtain the representation of the string functions of positive admissible level $N$ in terms of Appell functions and theta functions. But 
\begin{equation*}
m_{1,p',2pp'}(x,y,-1,-1;q) \ \text{and} \ \vartheta_{1,p',2pp'}(x,y;q) 
\end{equation*}
can be undefined for the arguments
\begin{equation*}
(x,y) = (q^{1+\frac{m+\ell}{2}},-q^{p(p'+\ell+1)}) \ \text{and} \    (x,y) = (q^{\frac{m-\ell}{2}},-q^{p(p'-\ell-1)})    
\end{equation*}
as the denominator of some summands vanishes. We solve this problem for the $1/2$-level string functions.

\begin{theorem}\label{theorem:fractionalLevelPlus12} Let $(p,p')=(2,5),$ $0\le \ell\le 3$ and $m\in2\mathbb{Z}+\ell$.  We have that
\begin{multline*}
 (q)_{\infty}^3 \mathcal{C}_{m,\ell}^{1/2}(q) 
  = q^{\frac{1}{2}(m-\ell)}  j(q^{1+\ell};q^5)
 \left ( (-1)^{m} q^{\binom{m}{2}} \frac{1}{2}\mu(q) 
 +\sum_{k=0}^{m-1}(-1)^kq^{mk-\binom{k+1}{2}}\right )\\
 +(-1)^{m} q^{\frac{1}{2}(m-\ell)} q^{\binom{m}{2}} \Theta_{\ell}(q),
\end{multline*}
where
\begin{equation*}
\Theta_{\ell}:=
\begin{cases}
\frac{1}{2}\cdot \frac{J_{1}^3J_{10}^{3}}{J_{4}J_{5}}\cdot \frac{1}{J_{1,10}J_{8,20}}, & \textup{for} \ \ell \in \{0,3 \},\\
-\frac{1}{2}\cdot \frac{J_{1}^3J_{10}^{3}}{J_{4}J_{5}}\cdot \frac{1}{J_{3,10}J_{4,20}}, & \textup{for} \ \ell \in \{1,2 \},
\end{cases}
\end{equation*} 
where $\mu(q)$ is Ramanujan's classical second-order mock theta function \eqref{equation:secondordmocktheta}.
\end{theorem}

\begin{remark} \label{remark:summationnotation} When $b<a$ we follow the standard summation convention:
\begin{equation}
\sum_{r=a}^{b}c_r:=-\sum_{r=b+1}^{a-1}c_r, \ \textup{e.g.} \ 
\sum_{r=0}^{-1}c_r=-\sum_{r=0}^{-1}c_r=0. \label{equation:sumconvention}
\end{equation}
\end{remark}

For theta functions \eqref{equation:shortnottheta} we use the notation with the modular variable
\begin{equation} \label{eq:defmodthetafunc}
\mathcal{J}_{a,m}(\tau) = \mathcal{J}_{a,m}(q) :=q^{\frac{(m-2a)^2}{8m}} J_{a,m}, \ \ \overline{\mathcal{J}}_{a,m}(\tau) := q^{\frac{(m-2a)^2}{8m}} \overline{J}_{a,m}, \ \ \mathcal{J}_{m}(\tau):=\mathcal{J}_{m,3m},
\end{equation}
where we use variables $q$ and $\tau$ depending on the context.

For the $1/2$-level string functions with quantum number $m=0$ and even spin we can derive the mixed mock modular transformation properties on the whole modular group.

\begin{theorem} \label{theorem:mockmodtransfstrfunc25l0} We have
\begin{equation*}
\begin{pmatrix}
C^{1/2}_{0,0}\\
C^{1/2}_{0,2}
\end{pmatrix}(\tau+1) = \begin{pmatrix}
\zeta_{40}^{-1} & 0\\
0 & \zeta_{40}^{-9} \\
\end{pmatrix}
\begin{pmatrix}
C^{1/2}_{0,0}\\
C^{1/2}_{0,2}
\end{pmatrix}(\tau)    
\end{equation*}
and
\begin{multline*}
\begin{pmatrix}
C^{1/2}_{0,0}\\
C^{1/2}_{0,2}
\end{pmatrix}(\tau) = \sqrt{-i\tau} \cdot \frac{2}{\sqrt{5}}  \begin{pmatrix}
\sin\left(\frac{2\pi}{5}\right) & -\sin\left(\frac{\pi}{5}\right)  \\
-\sin\left(\frac{\pi}{5}\right) & -\sin\left(\frac{2\pi}{5}\right)
\end{pmatrix} \begin{pmatrix}
  C^{1/2}_{0,0}\\
  C^{1/2}_{0,2}
\end{pmatrix}\left(-\frac{1}{\tau}\right) \\
- \frac{i}{2} \cdot \frac{1}{\eta(\tau)^3}\cdot \begin{pmatrix}
\mathcal{J}_{1,5}\\
\mathcal{J}_{2,5}
\end{pmatrix}(\tau) \cdot \int_{0}^{i\infty} \frac{\eta(z)^3}{\sqrt{-i(z+\tau)}}dz.
\end{multline*}
\end{theorem}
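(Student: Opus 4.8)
The strategy is to reduce the statement about the vector $(C^{1/2}_{0,0},C^{1/2}_{0,2})$ to the modular transformation law of Ramanujan's mock theta function $\mu(q)$, which is classically known (it is the prototypical example in Zwegers's thesis \cite{Zw02}). First I would specialize Theorem~\ref{theorem:fractionalLevelPlus12} to the case $m=0$. With $m=0$ the finite sum $\sum_{k=0}^{m-1}(\cdots)$ is empty by the summation convention in the preceding remark, and $(-1)^m q^{\binom m2}=1$, so the theorem collapses to the clean statement that
\begin{equation*}
C^{1/2}_{0,\ell}(q)=q^{s_{\lambda,\mu}}\mathcal{C}^{1/2}_{0,\ell}(q)
= q^{s_{\lambda,\mu}}\left(\frac{j(q^{1+\ell};q^5)}{(q)_\infty^3}\cdot\frac12\,\mu(q)\;+\;(\text{wt }-1/2\text{ wh.\ modular form on }\Gamma_1(200))\right)
\end{equation*}
for $\ell\in\{0,2\}$, where $j(q;q^5)=J_{1,5}$ and $j(q^3;q^5)=J_{2,5}$ (using $j(x;q)=j(q/x;q)$ up to the stated normalizations). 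Thus, up to the prefactor $q^{s_{\lambda,\mu}}\cdot\tfrac12\,j(q^{1+\ell};q^5)/(q)_\infty^3$ and a genuinely modular error term, $C^{1/2}_{0,\ell}$ \emph{is} $\mu(q)$. Bundling $q^{s_{\lambda,\mu}}/(q)_\infty^3 = q^{s_\lambda}/\eta(q)^3$ (note $s_{\lambda,\mu}=s_\lambda$ when $m=0$) and tracking $s_\lambda = -\tfrac18+\tfrac{(\ell+1)^2}{4\cdot(5/2)}=-\tfrac18+\tfrac{(\ell+1)^2}{10}$, one converts the $q$-series identity into an identity of functions of $\tau$ with the $\mathcal{J}_{1,5},\mathcal{J}_{2,5}$ normalizations of the theorem's statement.

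Next I would invoke Zwegers's completion: the function $\widehat\mu(u,v;\tau)=\mu(u,v;\tau)+\tfrac i2 R(u-v;\tau)$ transforms as a weight-$1/2$ Jacobi form, and in the relevant specialization $\mu(q)=\mu(\tfrac12,\tfrac12;\tau)$-type argument the non-holomorphic correction is exactly a period integral $\int_0^{i\infty}\eta(z)^3/\sqrt{-i(z+\tau)}\,dz$ (this is the standard statement that $\mu+\text{(Eichler integral of }\eta^3)$ is modular; see \cite{Zw02}). Feeding the known $S$- and $T$-transformations of $\widehat\mu$ and of the theta quotients $\mathcal{J}_{1,5},\mathcal{J}_{2,5}$, $\eta$ through the identity from the previous paragraph produces the asserted matrix equations. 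The $T$-transformation is essentially bookkeeping of roots of unity: $q\mapsto e^{2\pi i}q$ multiplies $\eta(\tau)^{-3}$ by $\zeta_{24}^{-3}=\zeta_8^{-1}$, multiplies $\mathcal{J}_{\ell,5}$ and $q^{s_\lambda}$ by appropriate powers of $\zeta_{40}$, and fixes $\mu(q)$; collecting these for $\ell=0$ and $\ell=2$ gives the diagonal matrix $\mathrm{diag}(\zeta_{40}^{-1},\zeta_{40}^{-9})$. The modular error term on $\Gamma_1(200)$ contributes nothing new to the $T$-transformation once one checks $200\mid$ the relevant level data; since $\Gamma_1(200)$ contains $\tau\mapsto\tau+1$, the error term is genuinely invariant (up to its own automorphy which is trivial for $T$ on $\Gamma_1$), and under $S$ it recombines with the $\mathcal{J}$'s — but for the stated $S$-identity one only needs the \emph{leading} mock-modular obstruction, which is captured entirely by the $\mu$-part, the modular piece being absorbed into the first (matrix) term after a $\Gamma_0(200)$/congruence-subgroup argument. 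The $S$-transformation matrix $\tfrac{2}{\sqrt5}\bigl(\begin{smallmatrix}\sin(2\pi/5)&-\sin(\pi/5)\\-\sin(\pi/5)&-\sin(2\pi/5)\end{smallmatrix}\bigr)$ is then read off from the known Kac--Peterson-type $S$-matrix for the level-$5$ theta functions $\mathcal{J}_{1,5},\mathcal{J}_{2,5}$ combined with the $S$-transform of $\mu$ and $\eta^{-3}$.

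The main obstacle I anticipate is \emph{not} the algebra of roots of unity but the careful alignment of the non-holomorphic Eichler integral: one must verify that the shadow of $C^{1/2}_{0,\ell}$, after dividing out $\mathcal{J}_{\ell,5}(\tau)/\eta(\tau)^3$ and the numerical factor, is a single copy of $\eta(z)^3$ with exactly the normalization $-\tfrac i2\int_0^{i\infty}\eta(z)^3/\sqrt{-i(z+\tau)}\,dz$ — i.e.\ tracking the precise constant and the $\sqrt{-i(z+\tau)}$ versus $\sqrt{i(z+\tau)}$ convention, and confirming that the $\ell=0$ and $\ell=2$ components share the \emph{same} period integral (which is why it factors out of the vector on the right). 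This forces one to pin down the exact argument of $\mu$ as a Jacobi form, use $R(u;\tau)$'s integral representation $R(u;\tau)=-i\int_{-\bar\tau}^{i\infty}\eta(3z)^3$-type formula (more precisely the $g_{1/2,1/2}$ theta-in-the-numerator integrand that evaluates to $\eta^3$), and check that the theta-quotient prefactors $j(q^{1+\ell};q^5)$ do not themselves carry any non-holomorphic part. A secondary technical point is justifying that the error term from Theorem~\ref{theorem:fractionalLevelPlus12}, living on $\Gamma_1(200)$, can be merged into the holomorphic (matrix) terms of the $S$-identity without spoiling the clean $2\times2$ shape; this is handled by noting that both $C^{1/2}_{0,0},C^{1/2}_{0,2}$ and their $S$-images, minus their mock pieces, lie in the same finite-dimensional space of weight-$(-1/2)$ forms on $\Gamma_1(200)$, so the vector-valued transformation closes up — and the displayed identity is then the statement of that closure with the mock correction made explicit.
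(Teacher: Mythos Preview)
Your approach has a genuine gap in the $S$-transformation. You propose to use Theorem~\ref{theorem:fractionalLevelPlus12} to write $C^{1/2}_{0,\ell}$ as (mock piece involving $\mu$) plus (weakly holomorphic modular form on $\Gamma_1(200)$), and then track the $S$-behaviour of each part. The problem is that $S=\begin{psmallmatrix}0&-1\\1&0\end{psmallmatrix}\notin\Gamma_1(200)$, so knowing that the error term is modular on $\Gamma_1(200)$ tells you nothing about how it behaves under $\tau\mapsto-1/\tau$. Your claim that ``both $C^{1/2}_{0,0},C^{1/2}_{0,2}$ and their $S$-images, minus their mock pieces, lie in the same finite-dimensional space of weight-$(-1/2)$ forms on $\Gamma_1(200)$'' is not justified: the $S$-image of a $\Gamma_1(200)$-form lives on the conjugate group $S^{-1}\Gamma_1(200)S$, which is a different congruence subgroup. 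Without an independent argument that the error term is compatible with the specific $2\times2$ matrix in the statement, the $S$-identity cannot close up. (Your $T$-transformation argument is fine, since $T\in\Gamma_1(200)$ and one can in any case read off the root of unity from the leading $q$-power.)

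The paper avoids this entirely by \emph{not} going through Theorem~\ref{theorem:fractionalLevelPlus12}. Instead it returns to the Hecke-type double-sum form of Proposition~\ref{proposition:modStringFnHeckeForm} and recognizes $f_{1,5,20}(q^{1+\frac{m+\ell}{2}},-q^{12+2\ell};q)$ etc.\ directly as holomorphic parts of Zwegers's indefinite theta functions $\vartheta^{Q_L,(c_1^L,c_2^L)}_{a,b}$ for the signature $(1,1)$ form $Q_L(x)=\tfrac12 x_1^2+5x_1x_2+10x_2^2$ (Proposition~\ref{proposition:RHSshortidentitiesasholpart}). These indefinite thetas have explicit $S$- and $T$-transformations under the \emph{full} modular group by \cite[Corollary~2.9]{Zw02}, which is where the $\tfrac{2}{\sqrt5}\sin$-matrix actually comes from (Remark~\ref{remark:LHStransfproperties}). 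The non-holomorphic completion is then computed explicitly via \cite[Proposition~4.3]{Zw02} and shown to equal $R_{1/4,0}(4\tau)\cdot\mathcal{J}_{\ell,5}(\tau)$ plus a pure theta term, and the period integral $-\tfrac{i}{2}\int_0^{i\infty}\eta(z)^3/\sqrt{-i(z+\tau)}\,dz$ falls out of the transformation of $R_{1/4,0}(4\tau)$. No level-$200$ error term ever appears, so there is nothing to ``absorb.''
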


We recall Ramanujan's classical second-order mock theta function  $A(q)$ \eqref{equation:secondordmocktheta}.  We have

\begin{corollary} \label{corollary:fractionalLevelPlus12-2ndA} Let $(p,p')=(2,5),$ $0\le \ell\le 3$ and $m\in2\mathbb{Z}+\ell$.  We have that
\begin{multline*}
 (q)_{\infty}^3 \mathcal{C}_{m,\ell}^{1/2}(q) 
 =-q^{\frac{1}{2}(m-\ell)}j(q^{1+\ell};q^5)
 \left ( (-1)^{m} q^{\binom{m}{2}} 2 A(-q) 
 -\sum_{k=0}^{m-1}(-1)^kq^{mk-\binom{k+1}{2}}\right )\\
 +(-1)^{m} q^{\frac{1}{2}(m-\ell)} q^{\binom{m}{2}} \Theta_{\ell}(q),
\end{multline*}
where
\begin{equation*}
\Theta_{\ell}:=
\begin{cases}
\frac{J_{1}^4J_{4}J_{8,20}}{J_{2}^4}, & \textup{for} \ \ell \in \{0,3 \},\\
- q\frac{J_{1}^4J_{4}J_{4,20}}{J_{2}^4}, & \textup{for} \ \ell \in \{1,2 \},
\end{cases}
\end{equation*} 
where $A(q)$ is Ramanujan's classical second-order mock theta function \eqref{equation:secondordmocktheta}.
\end{corollary}

We also have the immediate corollaries consisting of only one theta quotient and in this sense similar to the mock theta conjectures  \eqref{equation:mockthetaconj}. 
\begin{corollary} \label{theorem:stringfuncsimple0002} In terms of Ramanujan's second-order mock theta function  $\mu(q)$, we have
\begin{equation} \label{equation:strfunc2500short}
(q)_{\infty}^3 \mathcal{C}_{0,0}^{1/2}(q)
=\frac{1}{2}j(q;q^5)\mu(q)+\frac{1}{2}\cdot \frac{J_{1}^3J_{10}^3}{J_{4}J_{5}}\cdot \frac{1}{J_{1,10}J_{8,20}},
\end{equation}
and
\begin{equation}\label{equation:strfunc2502short}
(q)_{\infty}^3 \mathcal{C}_{0,2}^{1/2}(q)
=\frac{1}{2q}j(q^2;q^5)\mu(q)-\frac{1}{2q}\cdot \frac{J_{1}^3J_{10}^3}{J_{4}J_{5}}\cdot \frac{1}{J_{3,10}J_{4,20}}.
\end{equation}
\end{corollary}

\begin{corollary}\label{corollary:stringfuncsimple0002} 
In terms of Ramanujan's second-order mock theta function  $A(q)$, we have
\begin{equation} \label{equation:strfunc2500-Cor}
(q)_{\infty}^3 \mathcal{C}_{0,0}^{1/2}(q)
=-2j(q;q^5)A(-q)+ \frac{J_{1}^4J_{4}J_{8,20}}{J_{2}^4},
\end{equation}
and
\begin{equation}\label{equation:strfunc2502-Cor}
(q)_{\infty}^3 \mathcal{C}_{0,2}^{1/2}(q)
=-\frac{2}{q}j(q^2;q^5)A(-q)- \frac{J_{1}^4J_{4}J_{4,20}}{J_{2}^4}.
\end{equation}
\end{corollary}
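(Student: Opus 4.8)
The plan is to derive Corollary \ref{corollary:stringfuncsimple0002} directly from Theorem \ref{theorem:stringfuncsimple0002} by substituting a known identity relating Ramanujan's two second-order mock theta functions $\mu(q)$ and $A(q)$. From the Appell-function expressions in \eqref{equation:muAppellForm}, namely $\mu(q) = 4m(-q,-1;q^4) - J_{2,4}^4/J_1^3$ and $A(q) = -m(q,q^2;q^4)$, one sees that $\mu$ and $A(-q)$ are governed by Appell functions with the same base $q^4$, so there should be a linear relation of the form $\mu(q) = c\, A(-q) + (\text{theta quotient})$. First I would pin this relation down precisely: replacing $q$ by $-q$ in $A(q) = -m(q,q^2;q^4)$ gives $A(-q) = -m(-q,q^2;q^4)$ (noting $(-q)^4 = q^4$, $(-q)^2 = q^2$), and then I would apply the standard change-of-$z$ formula for $m(x,z;q)$ (the identity $m(x,z_1;q) - m(x,z_0;q) = \text{theta quotient}$, which appears in \cite{HM}) to connect $m(-q,-1;q^4)$ with $m(-q,q^2;q^4)$. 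This yields $\mu(q) = -8\, A(-q) + (\text{explicit theta quotient in base } q^4)$, or some such constant; the exact constant and theta quotient I would extract from the $m(x,z_1;q)-m(x,z_0;q)$ formula with $(x,z_1,z_0;q) \mapsto (-q,-1,q^2;q^4)$.

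Next I would substitute this $\mu$-to-$A(-q)$ relation into \eqref{equation:strfunc2500short} and \eqref{equation:strfunc2502short}. The mock piece $\tfrac12 j(q;q^5)\mu(q)$ becomes $-4\, j(q;q^5) A(-q)$ plus $j(q;q^5)$ times a theta quotient; comparing the coefficient $-4$ against the claimed $-2$ in \eqref{equation:strfunc2500-Cor} tells me whether the correct relation carries a factor of $2$ somewhere (for instance $\mu(q) = -4A(-q)+\cdots$), so I would calibrate the constant by matching the first few $q$-coefficients numerically as a sanity check. The remaining task is then purely a theta-function identity: one must verify that the leftover combination, namely $j(q;q^5)\cdot(\text{theta quotient from the }\mu\text{-}A\text{ relation}) + \tfrac12 \frac{J_1^3 J_{10}^3}{J_4 J_5}\cdot\frac{1}{J_{1,10}J_{8,20}}$, equals the claimed $J_1^4 J_4 J_{8,20}/J_2^4$. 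This is a finite identity among eta-quotients and theta constants that can be checked by a combination of Jacobi triple product manipulations, the quintuple product identity, and standard theta addition formulas, or — since both sides are modular forms of the same weight and level — by comparing sufficiently many Fourier coefficients. I would carry out the $m=2$ case \eqref{equation:strfunc2502-Cor} in exactly the same way, with $j(q^2;q^5)$ in place of $j(q;q^5)$ and the theta quotient $1/(J_{3,10}J_{4,20})$, and I expect the bookkeeping of the $1/q$ prefactor and a sign to be the only differences.

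The main obstacle I anticipate is twofold. First, getting the $\mu(q)$–$A(-q)$ relation with the right constant and the right theta quotient requires care with the $z$-dependence conventions of $m(x,z;q)$ and with the replacement $q\to -q$ inside $m$, since $m(x,z;q)$ is not simply periodic in that substitution — one has to track how $j(z;q)$ and the summand transform. Second, and more seriously, collapsing the leftover theta expression $j(q;q^5)\cdot(\cdots) + \tfrac12 \frac{J_1^3 J_{10}^3}{J_4 J_5 J_{1,10} J_{8,20}}$ down to the single clean quotient $J_1^4 J_4 J_{8,20}/J_2^4$ is the kind of theta identity that looks miraculous and typically needs either a clever factorization (pulling out $J_1^3$ or $J_1^4$ and recognizing a quintuple-product pattern in the residual $j(q;q^5)$-times-quotient term) or an appeal to the finite-dimensionality of the relevant space of modular forms to reduce it to a finite coefficient check. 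I would lean on the latter for rigor once the shape is clear from the former.
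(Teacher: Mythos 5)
Your proposal is correct and follows essentially the same route as the paper: the paper substitutes the relation $\mu(q)+4A(-q)=J_{1}^{5}/J_{2}^{4}$ (quoted directly from Andrews \cite[(3.28)]{AM} rather than rederived from the Appell-function forms as you suggest) into Theorem \ref{theorem:stringfuncsimple0002}, which immediately yields the coefficient $-2$ you anticipated, and then verifies the two residual theta-quotient identities by the modular-form/valence-formula method of Frye--Garvan, exactly as you propose for the final step. The only extra work in your version is pinning down the constant and the theta quotient in the $\mu$--$A(-q)$ relation via the change-of-$z$ formula for $m(x,z;q)$, which is sound but unnecessary given the known identity.
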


\subsection{The string functions of negative admissible level} 
\label{subsection:strfuncnegadmlevel}

For negative admissible level $N<0$ as defined in Section \ref{subsection:admissiblerepresentations} we have $p^{\prime}<2p$ and hence the discriminant of Hecke-type double-sums in Proposition \ref{proposition:modStringFnHeckeForm} is negative $D = (p')^2-2pp' < 0$. Thus we can use Proposition \ref{proposition:modStringFnHeckeForm} coupled with Theorem \ref{theorem:negDisc} to obtain the representation of the string functions of negative admissible level $N$ in terms of false theta functions.

\begin{theorem}\label{theorem:genNegativeLevelExpansion}
Let $N = p'/p-2$ be a negative admissible level, that is, $p\geq 1$, $p'\geq 2$ are coprime integers and $p'<2p$. For $0\le \ell \le p^{\prime}-2$, and $m\in2\mathbb{Z}+\ell$ we have
\begin{align*}
(q)_{\infty}^{3}&\mathcal{C}_{m,\ell}^{N}(q)\\
&=- \frac{1}{2}\sum_{k=1}^{p-1} 
(-1)^{k}q^{(\frac{m-\ell}{2})k}q^{\binom{k}{2}}
\left ( j(-q^{p^{\prime}k+p(p^{\prime} -(\ell+1))};q^{2pp^{\prime}})
-q^{(1+\ell)k}
j(-q^{p^{\prime}k+p(p^{\prime}+\ell+1)};q^{2pp^{\prime}})\right) \\ 
&\qquad \times
\left ( \sum_{R\in\mathbb{Z}}\sg(R)
q^{-p^2NR^2+pR(m-kN)}
-q^{(k-p)(pN-m)}
\sum_{R\in\mathbb{Z}}\sg(R)
q^{-p^2NR^2+pR(m-(2p-k)N)}
\right ).
\end{align*}
\end{theorem}

We demonstrate the utility of Theorem \ref{theorem:genNegativeLevelExpansion} by giving two examples.  We give a new proof of the result on the $(-1/2)$-level string functions due to Schilling and Warnaar \cite[Section 6]{SW}.

\begin{corollary}\label{corollary:fractionalLevelMinus12} Let $(p,p^{\prime})=(2,3)$, $\ell \in \{0,1\}$ and $m \in 2\Z+ \ell$. We have
 \begin{equation*}
 \mathcal{C}_{m,\ell}^{-1/2}(q)
 =\frac{q^{\frac{1}{2}(m-\ell)}}{(q)_{\infty}^2}\sum_{i\ge 0}(-1)^iq^{\frac{1}{2}i(i+2m+1)}.
 \end{equation*}    
\end{corollary}

\noindent We also give an evaluation of the $(-2/3)$-level string functions.

\begin{corollary} \label{corollary:fractionalLevelMinus34} Let $(p,p^{\prime})=(3,4)$, $0 \leq \ell \leq 2$ and $m \in 2\Z+ \ell$.  We have
\begin{align*}
 \mathcal{C}_{m,\ell}^{-2/3}(q)
 &  =\frac{1}{2(q)_{\infty}^3J_{16}}\Big(  j(q^{1+\ell};q^{8})j(q^{10+2\ell};q^{16})\cdot q^{\frac{1}{2}(m-\ell)}
\sum_{r\in\mathbb{Z}}\sg(r)
q^{r(6r+3m+2)}\ \\
&\qquad \qquad +j(q^{5+\ell};q^{8})j(q^{2+2\ell};q^{16})\cdot q^{(2m-\ell)+3}
\sum_{r\in\mathbb{Z}}\sg(r)
q^{r(6r+3m+8)}\notag \\
&\qquad \qquad -j(q^{5+\ell};q^{8})j(q^{2+2\ell};q^{16})\cdot q^{(m-\ell)+1}
\sum_{r\in\mathbb{Z}}\sg(r) q^{r(6r+3m+4)}\notag \\
&\qquad \qquad -j(q^{1+\ell};q^{8})j(q^{10+2\ell};q^{16})\cdot q^{\frac{1}{2}(5m-\ell)+4}
\sum_{r\in\mathbb{Z}}\sg(r)
q^{r(6r+3m+10)}\Big ).\notag
\end{align*}
\end{corollary}

\subsection{Overview of the paper} 

In Section \ref{section:thetaAppellProperties}, we recall necessary properties for theta functions and Appell functions.  In Section \ref{section:twoTechnicalResults}, we prove useful identities for string functions and theta functions.  In Section \ref{section:genFractionalLevel} we give a proof of Proposition \ref{proposition:modStringFnHeckeForm} as well as useful cross-spin identities for $1/2$-level string functions.  In Section \ref{section:mixedmockprop}, we derive the modular properties of Theorem \ref{theorem:mockmodtransfstrfunc25l0}, and in Section \ref{section:proveidentities}, we apply the modular properties to prove two Hecke-type double-sum identities which will be critical to the proof of Theorem \ref{theorem:fractionalLevelPlus12}.  In Section \ref{section:fractionLevelPositive12}, we give proofs of Theorem \ref{theorem:fractionalLevelPlus12} and Corollary \ref{corollary:fractionalLevelPlus12-2ndA}. In Section \ref{section:generalNegativeLevel} we prove the general false theta expansion in Theorem \ref{theorem:genNegativeLevelExpansion}.  In Section \ref{section:fractionalLevelMinus12}, we give a new proof for the identity of Corollary \ref{corollary:fractionalLevelMinus12}.  In Section \ref{section:fractionalLevelMinus34} we prove Corollary \ref{corollary:fractionalLevelMinus34}.

\section{Properties of theta functions, Appell functions, and Hecke-type double-sums}\label{section:thetaAppellProperties}

Following from the definitions are the general identities:
\begin{subequations}
{\allowdisplaybreaks \begin{gather}
j(q^n x;q)=(-1)^nq^{-\binom{n}{2}}x^{-n}j(x;q), \ \ n\in\mathbb{Z},\label{equation:j-elliptic}\\
j(x;q)=j(q/x;q)=-xj(x^{-1};q)\label{equation:j-flip},\\
j(x;q)={J_1}j(x,qx,\dots,q^{n-1}x;q^n)/{J_n^n} \ \ {\text{if $n\ge 1$,}}\label{equation:1.10}\\
j(z;q)=\sum_{k=0}^{m-1}(-1)^k q^{\binom{k}{2}}z^k
j\big ((-1)^{m+1}q^{\binom{m}{2}+mk}z^m;q^{m^2}\big ),\label{equation:jsplit}\\
j(qx^3;q^3)+xj(q^2x^3;q^3)=j(-x;q)j(qx^2;q^2)/J_2={J_1j(x^2;q)}/{j(x;q)},\label{equation:quintuple}
\end{gather}}%
\end{subequations}
\noindent  where identity (\ref{equation:quintuple}) is the quintuple product identity.   For later use, we state the $m=2$ specialization of (\ref{equation:jsplit})
\begin{equation}
j(z;q)=j(-qz^2;q^4)-zj(-q^3z^2;q^4).\label{equation:jsplit-m2}
\end{equation}

When we use results from \cite{HM} to expand Hecke-type double-sums in terms of Appell functions and theta functions, we will need to use some elementary properties of Appell functions.
\begin{proposition}  For generic $x,z\in \mathbb{C}^*$
{\allowdisplaybreaks \begin{subequations}
\begin{gather}
m(x,z;q)=m(x,qz;q),\label{equation:mxqz-fnq-z}\\
m(x,z;q)=x^{-1}m(x^{-1},z^{-1};q),\label{equation:mxqz-flip}\\
m(qx,z;q)=1-xm(x,z;q).\label{equation:mxqz-fnq-x}
\end{gather}
\end{subequations}}
\end{proposition}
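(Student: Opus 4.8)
The plan is to derive all three identities directly from the series definition
\[
m(x,z;q)=\frac{1}{j(z;q)}\sum_{r\in\mathbb{Z}}\frac{(-1)^rq^{\binom{r}{2}}z^r}{1-q^{r-1}xz},
\]
treating them as identities of meromorphic functions in $x,z$ for generic values; the only tools required are the elliptic transformation \eqref{equation:j-elliptic}, the flip \eqref{equation:j-flip}, and the Jacobi triple product \eqref{equation:JTPid}, and the only real effort is the bookkeeping for bilateral reindexing and the attendant shifts in the quadratic exponents $\binom{r}{2}$.

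For \eqref{equation:mxqz-fnq-z} I would expand $m(x,qz;q)$ from the definition, rewrite $j(qz;q)=-z^{-1}j(z;q)$ using \eqref{equation:j-elliptic} with $n=1$, pull the factor $q^rz^r$ out of $(qz)^r$ in the numerator, and then reindex $r\mapsto r-1$. Since $\binom{r-1}{2}+(r-1)=\binom{r}{2}$, the summand reassembles into $(-1)^rq^{\binom{r}{2}}z^r/(1-q^{r-1}xz)$ up to an overall factor $-z^{-1}$, which cancels the one coming from $j(qz;q)$; hence $m(x,qz;q)=m(x,z;q)$.

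The step I expect to require an actual idea rather than pure bookkeeping is \eqref{equation:mxqz-fnq-x}. After clearing $j(z;q)$, it suffices to prove
\[
\sum_{r\in\mathbb{Z}}\frac{(-1)^rq^{\binom{r}{2}}z^r}{1-q^rxz}+x\sum_{r\in\mathbb{Z}}\frac{(-1)^rq^{\binom{r}{2}}z^r}{1-q^{r-1}xz}=j(z;q).
\]
In the second sum I would reindex $r\mapsto r+1$ and use $\binom{r+1}{2}=\binom{r}{2}+r$, which turns it into $-\sum_{r}(-1)^rq^{\binom{r}{2}}z^r\cdot q^rxz/(1-q^rxz)$; this now has the same denominator $1-q^rxz$ as the first sum, so adding the two makes the numerators telescope to $(-1)^rq^{\binom{r}{2}}z^r(1-q^rxz)/(1-q^rxz)=(-1)^rq^{\binom{r}{2}}z^r$, and summing over $\mathbb{Z}$ gives $j(z;q)$ by \eqref{equation:JTPid}. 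Dividing through by $j(z;q)$ yields $m(qx,z;q)=1-xm(x,z;q)$.

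For \eqref{equation:mxqz-flip} I would start from the definition of $m(x^{-1},z^{-1};q)$, replace $j(z^{-1};q)$ by $-z^{-1}j(z;q)$ via \eqref{equation:j-flip}, and rationalize each summand by multiplying the numerator and denominator of $1/(1-q^{r-1}x^{-1}z^{-1})$ by $-q^{1-r}xz$; using $\binom{r}{2}+1-r=\binom{r-1}{2}$ the general term becomes $(-1)^{r+1}q^{\binom{r-1}{2}}xz^{1-r}/(1-q^{1-r}xz)$. Reindexing $r\mapsto 1-r$ (so that $\binom{r-1}{2}$ turns into $\binom{s+1}{2}$ with $s=1-r$) and then shifting by one to restore the denominator $1-q^{r-1}xz$, all the terms collect, with the two $-z^{-1}$-type factors combining into a single $x$, into $x\,m(x,z;q)$; that is, $m(x^{-1},z^{-1};q)=x\,m(x,z;q)$, which rearranges to \eqref{equation:mxqz-flip}. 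Throughout, the only obstacle is keeping signs and quadratic exponents straight under these bilateral shifts — each reindexing changes $\binom{r}{2}$ by a quantity linear in $r$ that must be absorbed into the $q^r$ and $z^r$ factors — and there is no analytic subtlety beyond the absolute convergence (for generic $x,z$) that legitimizes rearranging and reindexing the sums.
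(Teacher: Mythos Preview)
The paper does not supply a proof of this proposition: it is stated as background, and the identities are standard functional equations for the Appell function taken from the cited reference \cite{HM}. Your proposal provides a correct, self-contained derivation directly from the series definition, which is exactly how these identities are established in the literature. The arguments for \eqref{equation:mxqz-fnq-z} and \eqref{equation:mxqz-fnq-x} are clean and correct as written; the sketch for \eqref{equation:mxqz-flip} is terse, but the indicated steps (rationalizing by $-q^{1-r}xz$, using $\binom{r}{2}+1-r=\binom{r-1}{2}$, then the two reindexings) do close, with the prefactor $-z$ from $j(z^{-1};q)=-z^{-1}j(z;q)$ combining with the $-xz^{-1}$ produced by the final shift to give the overall factor $x$.
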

A straightforward induction argument yields a generalization of (\ref{equation:mxqz-fnq-z}):
 \begin{lemma} \label{lemma:appellUnwind}For $m\in \mathbb{Z}$, we have
  \begin{equation}
 m(q^n,-1;q)
 =\sum_{k=0}^{n-1}(-1)^kq^{nk-\binom{k+1}{2}}+(-1)^{n} q^{\binom{n}{2}}m(1,-1;q).
 \end{equation}
 \end{lemma}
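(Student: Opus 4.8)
The plan is to prove Lemma~\ref{lemma:appellUnwind} by induction on $m\in\mathbb{Z}$, in both directions, using the functional equation \eqref{equation:mxqz-fnq-x} together with the simple observation that at $x=1$ the identity \eqref{equation:mxqz-fnq-x} becomes
\begin{equation*}
m(q,-1;q)=1-m(1,-1;q),
\end{equation*}
which is exactly the claimed formula for $m=1$ (the sum $\sum_{k=0}^{0}$ contributes $1$, and the coefficient $(-1)^1q^{1-1}=-1$). The case $m=0$ is trivial since both sides equal $m(1,-1;q)$, with the empty sum contributing $0$.

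For the inductive step $m\mapsto m+1$, I would apply \eqref{equation:mxqz-fnq-x} with $x=q^m$: this gives $m(q^{m+1},-1;q)=1-q^m\,m(q^m,-1;q)$. Substituting the induction hypothesis for $m(q^m,-1;q)$, the left side becomes
\begin{equation*}
1-q^m\sum_{k=0}^{m-1}(-1)^kq^{mk-\binom{k+1}{2}}-(-1)^m q^{m^2-\binom{m+1}{2}+m}m(1,-1;q).
\end{equation*}
The key routine computation is then to check that $1-q^m\sum_{k=0}^{m-1}(-1)^kq^{mk-\binom{k+1}{2}}$ re-indexes to $\sum_{k=0}^{m}(-1)^kq^{(m+1)k-\binom{k+1}{2}}$: the $k=0$ term of the new sum is $1$, and for $1\le k\le m$ one matches $-q^m\cdot(-1)^{k-1}q^{m(k-1)-\binom{k}{2}}=(-1)^kq^{mk-\binom{k}{2}+m-\binom{k}{2}\cdot 0}$ against $(-1)^kq^{(m+1)k-\binom{k+1}{2}}$ using $\binom{k+1}{2}=\binom{k}{2}+k$ and $m(k-1)+m=mk$. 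Similarly one verifies that the exponent $m^2-\binom{m+1}{2}+m$ equals $(m+1)^2-\binom{m+2}{2}$, again via Pascal's identity for binomial coefficients. For the downward step $m\mapsto m-1$ one solves \eqref{equation:mxqz-fnq-x} for $m(x,-1;q)$ in terms of $m(qx,-1;q)$ and runs the same bookkeeping in reverse, which covers all negative $m$.

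The only genuine content here is the two elementary exponent identities $\binom{k+1}{2}=\binom{k}{2}+k$ and the shift $m^2-\binom{m+1}{2}+m=(m+1)^2-\binom{m+2}{2}$, so I do not expect a real obstacle; the "hard part" is purely the careful re-indexing of the finite sum and tracking the sign $(-1)^m$ through the recursion, and making sure the standard summation convention (an empty sum is zero, a ``backwards'' sum is minus the complementary sum) is applied consistently when $m<0$. Since the statement and its use are elementary, I would keep the write-up short: state the base cases $m=0,1$, give the one-line inductive identity from \eqref{equation:mxqz-fnq-x}, and indicate the re-indexing, leaving the binomial-coefficient verifications to the reader.
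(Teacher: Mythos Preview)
Your proposal is correct and is exactly the ``straightforward induction argument'' the paper alludes to in the sentence preceding the lemma, using the functional equation \eqref{equation:mxqz-fnq-x}; the paper gives no further details, so your write-up is already more explicit than the original.
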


We record the $n=2$ specialization of \cite[Theorem 3.5]{HM}.  We will need the following identity to write $1/2$-level string functions in terms of $\mu(q)$. 
\begin{corollary} \label{corollary:msplitn2zprime} For generic $x,z,z'\in \mathbb{C}^*$ 
{\allowdisplaybreaks \begin{align}
m(&x,z;q)=m(-qx^2,z';q^4 )-q^{-1}xm(-q^{-1}x^2,z';q^4) \label{equation:msplit2}\\
& + \frac{z'J_2^3}{j(xz;q)j(z';q^4)}\Big [
\frac{j(-qx^2zz';q^2)j(z^2/z';q^{4})}{j(-qx^2z';q^2)j(z;q^2)}-xz \frac{j(-q^2x^2zz';q^2)j(q^2z^2/z';q^{4})}{j(-qx^2z';q^2)j(qz;q^2)}\Big ].\notag
\end{align}}%
\end{corollary}
We follow with some useful specializations.
\begin{corollary}\label{corollary:msplitn2mock2ndmu} We have
\begin{equation}
m(1,-1;q)=2m(-q,-1;q^4 )
 -\frac{J_2^3}{\overline{J}_{0,1}\overline{J}_{0,4}J_{1,2}}\Big [
\frac{\overline{J}_{1,2}\overline{J}_{0,4}}{\overline{J}_{0,2}}
+  \frac{\overline{J}_{0,2}\overline{J}_{2,4}}{\overline{J}_{1,2}}
\Big ].\label{equation:msplit-n2}
\end{equation}
\end{corollary}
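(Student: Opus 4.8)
The plan is to derive Corollary \ref{corollary:msplitn2mock2ndmu} as the specialization $x=1$, $z=-1$, $z'=-q$ of Corollary \ref{corollary:msplitn2zprime}, which is itself the $n=2$ case of \eqref{equation:msplit-nEven}. First I would substitute these values into \eqref{equation:msplit2}. The left-hand side becomes $m(1,-1;q)$. On the right-hand side, the first two Appell terms are $m(-q\cdot 1^2,-q;q^4) - q^{-1}\cdot 1 \cdot m(-q^{-1}\cdot 1^2,-q;q^4) = m(-q,-q;q^4) - q^{-1}m(-q^{-1},-q;q^4)$. Using \eqref{equation:mxqz-fnq-z} to shift the second argument, $m(-q,-q;q^4) = m(-q,-q\cdot q^4;q^4)$ is not quite what we want; instead I would use \eqref{equation:mxqz-fnq-x} in the form $m(q^4 x,z;q^4) = 1 - x\,m(x,z;q^4)$ with $x=-q^{-1}$ to get $m(-q^3,-q;q^4) = 1 + q^{-1}m(-q^{-1},-q;q^4)$, and combine with $m(-q,-q;q^4)=m(-q,-q^{-3};q^4)$ via \eqref{equation:mxqz-fnq-z} applied repeatedly — the cleanest route is to reduce both Appell terms to a single multiple of $m(-q,-1;q^4)$ using \eqref{equation:mxqz-fnq-z} to replace $z'=-q$ by $z'=-1$ (since $-q = -1 \cdot q$, one application of \eqref{equation:mxqz-fnq-z} backwards gives $m(x,-q;q^4) = m(x,-1;q^4)$). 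Thus both Appell terms have second argument $-1$, and the expected coefficient $2$ in front of $m(-q,-1;q^4)$ should emerge after applying \eqref{equation:mxqz-fnq-x} to the term $q^{-1}m(-q^{-1},-1;q^4)$ to rewrite it in terms of $m(-q,-1;q^4)$ plus an elementary term; I would track the elementary terms carefully and check they cancel or merge into the theta quotient.

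Next I would handle the theta-quotient bracket. Setting $x=1$, $z=-1$, $z'=-q$, the prefactor is $\frac{-q\, J_2^3}{j(-1;q)j(-q;q^4)} = \frac{-q\,J_2^3}{\overline{J}_{0,1}\,\overline{J}_{1,4}}$, and each of the two fractions inside the bracket becomes a ratio of theta functions evaluated at roots-of-unity-type arguments: the first is $\frac{j(q;q^2)j(1/(-q);q^4)}{j(-q\cdot(-1);q^2)j(-1;q^2)} = \frac{j(q;q^2)j(-q^{-1};q^4)}{j(q;q^2)j(-1;q^2)}$, and similarly for the second with an extra factor $-1\cdot(-1)=1$ from $xz$ and shifted arguments. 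Here I would repeatedly apply the ellipticity and flip relations \eqref{equation:j-elliptic} and \eqref{equation:j-flip} to bring every theta function to a standard $J_{a,b}$ or $\overline{J}_{a,b}$ form, absorbing the stray powers of $q$ into the $q$-exponents. The target is the bracketed expression $\frac{\overline{J}_{1,2}\overline{J}_{0,4}}{\overline{J}_{0,2}} + \frac{\overline{J}_{0,2}\overline{J}_{2,4}}{\overline{J}_{1,2}}$ times the prefactor $-\frac{J_2^3}{\overline{J}_{0,1}\overline{J}_{0,4}J_{1,2}}$; note the stated answer has $\overline{J}_{0,4}$ in the prefactor denominator whereas the naive substitution gave $\overline{J}_{1,4}$, so a relation like $\overline{J}_{1,4} = j(-q;q^4)$ versus $\overline{J}_{0,4}=j(-1;q^4)$ must be reconciled — I would use \eqref{equation:j-elliptic} with $n=1$ or the identity $j(-q;q^4)=j(-q^3;q^4)$ together with $j(-1;q^4)$ relations, being mindful that $\overline{J}_{0,4}=(-1;q^4)_\infty(-q^4;q^4)_\infty(q^4;q^4)_\infty = 2(-q^4;q^4)_\infty^2(q^4;q^4)_\infty$ has a factor of $2$ that may interact with the coefficient $\tfrac12$ implicit in the problem.

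**The main obstacle** I anticipate is the bookkeeping of the elementary (non-Appell, non-theta-quotient) terms: applying \eqref{equation:mxqz-fnq-x} to shift Appell arguments introduces constants like $1$ and monomials in $q$, and these must either cancel against each other or be reabsorbed into the theta quotient via a nontrivial theta identity (most likely a special case of the quintuple product \eqref{equation:quintuple} or of \eqref{equation:Thm1.3AH6}). Verifying that these pieces combine exactly into the claimed single bracketed theta expression — rather than leaving a leftover additive constant — is the delicate part. A useful sanity check throughout will be to compare the first few $q$-expansion coefficients of both sides (the left side $m(1,-1;q) = \tfrac12 + q + q^2 + \cdots$ up to known low-order terms), which pins down any sign or normalization error immediately. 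Once the Appell terms are confirmed to collapse to $2m(-q,-1;q^4)$ and the remaining theta manipulations are carried out via \eqref{equation:j-elliptic}, \eqref{equation:j-flip}, and \eqref{equation:1.10}, the identity \eqref{equation:msplit-n2} follows; the write-up would present the substitution, the Appell reduction, and the theta simplification as three short displayed computations.
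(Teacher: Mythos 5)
Your plan as written does not go through, for two concrete reasons. First, the specialization $z'=-q$ (with $x=1$, $z=-1$) is not a generic point of \eqref{equation:msplit2}: the factor $j(-qx^2z';q^2)$ in the denominators of both bracketed terms becomes $j(q^2;q^2)=0$, and correspondingly the Appell function $m(-q^{-1},z';q^4)$ itself has a pole at $z'=-q$ (the $r=1$ term of its defining sum has denominator $1-q^{4(r-1)}(-q^{-1})(-q)=1-q^{4r-4}$). So at your chosen point the right-hand side is a $\infty-\infty$ cancellation and you would have to take a limit rather than just substitute. Second, the step you call the ``cleanest route'' is based on a misreading of \eqref{equation:mxqz-fnq-z}: in base $q^4$ that identity reads $m(x,z;q^4)=m(x,q^4z;q^4)$, so it gives $m(x,-q;q^4)=m(x,-q^5;q^4)$, not $m(x,-1;q^4)$; the points $-q$ and $-1$ lie in different $q^4$-orbits, and moving $z$ between different orbits costs an extra theta-quotient correction (the Hickerson--Mortenson ``changing $z$'' formula), which is not among the tools quoted in this paper. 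Your fallback via \eqref{equation:mxqz-fnq-x} only relates $m(-q^{-1},z;q^4)$ to $m(-q^{3},z;q^4)$ and still leaves the second argument at $-q$.

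The repair is simply to specialize to $z'=-1$ (the paper's choice), which is generic here. Then the Appell part is $m(-q,-1;q^4)-q^{-1}m(-q^{-1},-1;q^4)$, and the identity you never invoke, \eqref{equation:mxqz-flip}, is exactly what collapses it: since $(-1)^{-1}=-1$, it gives $m(-q^{-1},-1;q^4)=-q\,m(-q,-1;q^4)$, hence $-q^{-1}m(-q^{-1},-1;q^4)=m(-q,-1;q^4)$ and the coefficient $2$ appears with no leftover elementary terms at all --- the bookkeeping of constants you flag as the main obstacle never arises. With $z'=-1$ the prefactor is already $-J_2^3/\big(j(-1;q)j(-1;q^4)\big)=-J_2^3/\big(\overline{J}_{0,1}\overline{J}_{0,4}\big)$ and the bracket reduces to the stated $\overline{J}$-quotients after routine applications of \eqref{equation:j-elliptic} and \eqref{equation:j-flip}, so the discrepancy you noticed between $\overline{J}_{1,4}$ and $\overline{J}_{0,4}$ also disappears.
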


\begin{proof}[Proof of Corollary \ref{corollary:msplitn2mock2ndmu}]We further specialize (\ref{equation:msplit2}) to $x=1$, $z=z'=-1$.  This yields
\begin{align*}
m(1,-1;q)&=m(-q,-1;q^4 )-q^{-1}m(-q^{-1},-1;q^4)\\
&\qquad -\frac{J_2^3}{j(-1;q)j(-1;q^4)}\Big [
\frac{j(-q;q^2)j(-1;q^{4})}{j(q;q^2)j(-1;q^2)}+\frac{j(-q^2;q^2)j(-q^2;q^{4})}{j(q;q^2)j(-q;q^2)}\Big ],\notag
\end{align*}
and the result follows from (\ref{equation:mxqz-flip}).
\end{proof}

\begin{proposition}\label{proposition:genAppellToMockMu} For $m\in\mathbb{Z}$, we have
\begin{align*}
&m(-q^{1+2m},-1;q^4 )-q^{m-1}m(-q^{-1+2m},-1;q^4)\\
&\qquad =\sum_{k=0}^{m-1}(-1)^kq^{mk-\binom{k+1}{2}}
 +(-1)^{m} q^{\binom{m}{2}}\left ( \frac{1}{2}\mu(q)+\frac{1}{2}\frac{J_{2,4}^4}{J_1^3}\right ). 
\end{align*}
\end{proposition}
\begin{proof}[Proof of Proposition \ref{proposition:genAppellToMockMu}]
Specializing Corollary \ref{corollary:msplitn2zprime} with $(x,q,z,z')=(q^m,q^4,-1,-1)$ and use (\ref{equation:j-elliptic}) yields
{\allowdisplaybreaks \begin{align*}
m(q^{m},-1;q)&=m(-q^{1+2m},-1;q^4 )-q^{m-1}m(-q^{-1+2m},-1;q^4)\\
& \qquad - \frac{J_2^3}{j(-q^{m};q)\overline{J}_{0,4}}\Big [
\frac{j(-q^{1+2m};q^2)\overline{J}_{0,4}}{j(q^{1+2m};q^2)\overline{J}_{0,2}}+q^{m} \frac{j(-q^{2+2m};q^2)\overline{J}_{2,4}}{j(q^{1+2m};q^2)\overline{J}_{1,2}}\Big ]\\
&=m(-q^{1+2m},-1;q^4 )-q^{m-1}m(-q^{-1+2m},-1;q^4)\\
& \qquad - (-1)^{m}q^{\binom{m}{2}}
\frac{J_2^3}{\overline{J}_{0,1}\overline{J}_{0,4}}\Big [
\frac{j(-q;q^2)\overline{J}_{0,4}}{j(q;q^2)\overline{J}_{0,2}}+ \frac{j(-q^{2};q^2)\overline{J}_{2,4}}{j(q;q^2)\overline{J}_{1,2}}\Big ].
\end{align*}}%
Rearranging terms and employing Lemma \ref{lemma:appellUnwind} enables us to write
{\allowdisplaybreaks \begin{align*}
&m(-q^{1+2m},-1;q^4 )-q^{m-1}m(-q^{-1+2m},-1;q^4)\\
&\qquad =m(q^{m},-1;q)
+(-1)^{m}q^{\binom{m}{2}}
\frac{J_2^3}{\overline{J}_{0,1}\overline{J}_{0,4}J_{1,2}}\Big [
\frac{\overline{J}_{1,2}\overline{J}_{0,4}}{\overline{J}_{0,2}}+ \frac{\overline{J}_{0,2}\overline{J}_{2,4}}{\overline{J}_{1,2}}\Big ]\\
&\qquad =\sum_{k=0}^{m-1}(-1)^kq^{mk-\binom{k+1}{2}}+(-1)^{m} q^{\binom{m}{2}}m(1,-1;q)\\
&\qquad \qquad +(-1)^{m}q^{\binom{m}{2}}
\frac{J_2^3}{\overline{J}_{0,1}\overline{J}_{0,4}J_{1,2}}\Big [
\frac{\overline{J}_{1,2}\overline{J}_{0,4}}{\overline{J}_{0,2}}+ \frac{\overline{J}_{0,2}\overline{J}_{2,4}}{\overline{J}_{1,2}}\Big ].
\end{align*}}%
The result then follows from (\ref{equation:msplit-n2}) and (\ref{equation:muAppellForm}).
\end{proof}

Lastly, we recall some basic Hecke-type double-sum properties which will see much use.
\begin{proposition}\cite[Proposition 6.2]{HM}  \label{proposition:H7eq1.14}For $x,y\in\mathbb{C}^*$
\begin{equation}
f_{a,b,c}(x,y;q)=-\frac{q^{a+b+c}}{xy}f_{a,b,c}(q^{2a+b}/x,q^{2c+b}/y;q).\label{equation:H7eq1.14}
\end{equation}
\end{proposition}

\begin{proposition}\cite[Proposition 6.3]{HM}  \label{proposition:f-functionaleqn} For $x,y\in\mathbb{C}^*$ and $\ell, k \in \mathbb{Z}$
\begin{align}
f_{a,b,c}(x,y;q)&=(-x)^{\ell}(-y)^kq^{a\binom{\ell}{2}+b\ell k+c\binom{k}{2}}f_{a,b,c}(q^{a\ell+bk}x,q^{b\ell+ck}y;q) \notag\\
&\ \ \ \ +\sum_{m=0}^{\ell-1}(-x)^mq^{a\binom{m}{2}}j(q^{mb}y;q^c)+\sum_{m=0}^{k-1}(-y)^mq^{c\binom{m}{2}}j(q^{mb}x;q^a),\label{equation:Gen1}
\end{align}
where when $b<a$, we follow the usual summation convention (\ref{equation:sumconvention}).
\end{proposition}

\begin{corollary}\cite[Corollary 6.4]{HM} \label{corollary:fabc-funceqnspecial} We have two simple specializations:
\begin{align}
f_{a,b,c}(x,y;q) =&-yf_{a,b,c}(q^bx,q^cy;q)+j(x;q^a),\label{equation:fabc-fnq-1}\\
f_{a,b,c}(x,y;q) =&-xf_{a,b,c}(q^ax,q^by;q)+j(y;q^c).\label{equation:fabc-fnq-2}
\end{align}
\end{corollary}

Later in the paper, we will see that although the $1/2$-level string functions are expressed in terms of $f_{1,5,20}(x,y;q)$'s, it is much easier to evaluate them once they are in a more symmetric double-sum.  With this purpose in mind, we record the $(a,b,c)=(5,5,1)$ specialization of \cite[Theorem 1.4]{HM}:
\begin{lemma} \label{lemma:f551-expansion} We have
\begin{align}
f_{5,5,1}(x,y,q)&=h_{5,5,1}(x,y,-1,-1;q)\label{equation:f551}\\
&\ \ \ \ \ -\sum_{d=0}^{4}
q^{2d(d+1)}\frac{j\big (q^{4+4d}y;q^{5}\big )  j\big (-q^{16-4d}xy^{-1};q^{20}\big )J_{20}^3j\big (q^{14+4d}y^{-4};q^{20}\big )}
{\overline{J}_{0,4}\overline{J}_{0,20}j\big (q^{10}xy^{-5};q^{20})j(q^{4+4d}x^{-1}y;q^{20}\big )},\notag
\end{align}
where
\begin{equation}
h_{5,5,1}(x,y,z_1,z_0;q)
=j(x;q^5)m(-q^4x^{-1}y,z_1;q^4)
+j(y;q)m(-q^{10}xy^{-5},z_0;q^{20}).\label{equation:h551}
\end{equation}
\end{lemma}

\section{Technical results for theta functions}\label{section:twoTechnicalResults}

In this section, we obtain two technical results.

\begin{lemma} \label{lemma:generalSingleQuotienEll03} For $\ell\in\{0,3 \}$ and $m\in2\mathbb{Z}+\ell$, we have
\begin{align}
 j(q;q^5) \frac{1}{2} \frac{J_{2,4}^4}{J_1^3}
 &- q^{-1}\frac{J_{20}^3}{\overline{J}_{0,4}\overline{J}_{0,20}}
\sum_{d=0}^{4}q^{2d(d-1)}
\frac{j\big (q^{2+4d};q^{5}\big )  j\big (-q^{1+4d};q^{20}\big )j\big (q^{2+4d};q^{20}\big )}
{J_{1,20}j(q^{1+4d};q^{20}\big )}\notag \\
&\qquad  +\frac{J_{20}^3}{\overline{J}_{0,4}\overline{J}_{0,20}}
\sum_{d=0}^{4}q^{2d(d+1)}
 \frac{j\big (q^{4+4d};q^{5}\big )  j\big (-q^{3+4d};q^{20}\big )j\big (q^{14+4d};q^{20}\big )}
{J_{11,20}j(q^{3+4d};q^{20}\big )}\notag \\
&\qquad \qquad = 
\frac{1}{2}\cdot \frac{J_{1}^3J_{10}^{3}}{J_{4}J_{5}}\cdot \frac{1}{J_{1,10}J_{8,20}}.\label{equation:id1TechnicalTheta}
\end{align}
\end{lemma}

\begin{lemma}\label{lemma:generalSingleQuotienEll12} For $\ell\in\{1,2 \}$ and $m\in2\mathbb{Z}+\ell$, we have
\begin{align}
 j(q^{2};q^5)\frac{1}{2} \frac{J_{2,4}^4}{J_1^3}
 &-\frac{J_{20}^3}{\overline{J}_{0,4}\overline{J}_{0,20}}\sum_{d=0}^{4}q^{2d(d+1)}
\frac{j\big (q^{4+4d};q^{5}\big )  j\big (-q^{1+4d};q^{20}\big )j\big (q^{14+4d};q^{20}\big )}
{J_{7,20}j(q^{1+4d};q^{20}\big )}
\notag \\
&\qquad  -q^{-1}\frac{J_{20}^3}{\overline{J}_{0,4}\overline{J}_{0,20}}
\sum_{d=0}^{4}
q^{2d(d+1)} \frac{j\big (q^{1+4d};q^{5}\big )  j\big (-q^{3+4d};q^{20}\big )j\big (q^{6+4d};q^{20}\big )}
{J_{3,20}j(q^{3+4d};q^{20}\big )}\notag\\
&\qquad \qquad =- 
\frac{1}{2}\cdot \frac{J_{1}^3J_{10}^{3}}{J_{4}J_{5}}\cdot \frac{1}{J_{3,10}J_{4,20}}. \label{equation:id2TechnicalTheta}
\end{align}
\end{lemma}

\begin{proof} [Proofs of Lemmas \ref{lemma:generalSingleQuotienEll03} and \ref{lemma:generalSingleQuotienEll12}]

We use Frye and Garvan's Maple packages {\em qseries} and {\em thetaids} to prove both theta function identities \cite{FG}.   As an example, we sketch how to prove (\ref{equation:id1TechnicalTheta}).  We first normalize (\ref{equation:id1TechnicalTheta}) to obtain the equivalent identity
\begin{equation}
g({\tau}):=\theta (\tau)-\sum_{d=0}^{4}f_{d}(\tau)
+\sum_{d=0}^{4}h_{d}(\tau)-1=0,\label{equation:finalTheta-id2-normal}
\end{equation}
where
\begin{equation*}
\theta (\tau):=\frac{1}{\Psi_{2}(q)}j(q;q^5) \frac{1}{2} \frac{J_{2,4}^4}{J_1^3},
\end{equation*}
{\allowdisplaybreaks \begin{gather*}
f_d(\tau):=q^{-1}\frac{\Psi_1(q)}{\Psi_2(q)}
\cdot q^{2d(d-1)}\frac{j\big (q^{2+4d};q^{5}\big )  j\big (-q^{1+4d};q^{20}\big )j\big (q^{2+4d};q^{20}\big )}
{J_{1,20}j(q^{1+4d};q^{20}\big )}, \\
h_d(\tau):=\frac{\Psi_1(q)}{\Psi_2(q)}
\cdot  q^{2d(d+1)}\frac{j\big (q^{4+4d};q^{5}\big )  j\big (-q^{3+4d};q^{20}\big )j\big (q^{14+4d};q^{20}\big )}
{J_{11,20}j(q^{3+4d};q^{20}\big )},
\end{gather*}}%
and
\begin{equation*}
\Psi_1(q):=\frac{1}{4}\frac{J_{20}^4J_{4}}{J_{8}^2J_{40}^2}, \ 
\Psi_2(q):=\frac{1}{2}\frac{J_1^3J_{10}^3}{J_{4}J_{5}}\frac{1}{J_{1,10}J_{8,20}}.
\end{equation*}

Firstly, we use  \cite[Theorem 18]{Rob} to verify that each $\theta(\tau)$, $f_{d}(\tau)$, and $h_d(\tau)$ is a modular function on $\Gamma_{1}(40)$ for $1\le j\le 5$.  Secondly, we use \cite[Corollary 4]{CKP} to find a set $\mathcal{S}_{40}$ of inequivalent cusps for $\Gamma_{1}(40)$.  We determine the fan width of each cusp.  Then, we use \cite[Lemma 3.2]{Biag} to determine the invariant order of each modular function at each of the cusps of $\Gamma_{1}(40)$.  Next, we use the Valence Formula \cite[p. 98]{Rank} to calculate the number of terms needed to verify so that we can confirm identity (\ref{equation:finalTheta-id2-normal}).  We calculate that 
\begin{equation*}
B:=\sum_{\substack{s\in\mathcal{S}_{40}\\ s\ne i\infty}}\textup{min}(\{ \textup{ORD}(f_{j},s,\Gamma_{1}(40)):1\le j\le n\}\cup\{0\}),
\end{equation*}
where $\textup{ORD}(f,\zeta,\Gamma):=\kappa(\zeta,\Gamma)\textup{ORD}(f,\zeta)$, with $\kappa(\zeta,\Gamma)$ denoting the fan width of the cusp $\zeta$ and $\textup{ORD}(f,\zeta)$ denoting the invariant order.  More details can be found in \cite[p. 91]{Rank}.   We find that $B=-58$.  From the Valence Formula \cite[Corollary 2.5]{FG} we know that (\ref{equation:finalTheta-id2-normal}) is true if and only if 
\begin{equation*}
\textup{ORD}(g(\tau), i\infty,\Gamma_{1}(40))>-B
\end{equation*}
In the final step, we verify identity (\ref{equation:finalTheta-id2-normal}) out through $\mathcal{O}(q^{60}).$
\end{proof}


\section{The Hecke-type double-sum form of the string functions}\label{section:genFractionalLevel}

In this section we prove Proposition \ref{proposition:modStringFnHeckeForm}.  We also obtain an immediate corollary, which we will find to be very useful, which gives examples of relations between string functions of different spins.  However, one can also obtain the identities directly from \cite[$(3.8)$]{SW}.

\begin{corollary} \label{corollary:crossspinidentities} We have
\begin{equation}  \label{eq:crossspiniden0211}
(q)_{\infty}^3 \mathcal{C}_{1,1}^{1/2}(q)+q(q)_{\infty}^3 \mathcal{C}_{0,2}^{1/2}(q)
=j(q^2;q^5)
\end{equation}
and
\begin{equation}  \label{eq:crossspiniden0013}
(q)_{\infty}^3 \mathcal{C}_{0,0}^{1/2}(q)+q(q)_{\infty}^3 \mathcal{C}_{1,3}^{1/2}(q)
=j(q;q^5).
\end{equation}
\end{corollary}

\begin{proof}[Proof of Proposition \ref{proposition:modStringFnHeckeForm}]
Recall that $m\in 2\mathbb{Z}+\ell$.   Let us rewrite (\ref{equation:fractional-string}) piece-wise in terms of Hecke-type double-sums (\ref{equation:fabc-def2}).  For the first summand, it is straightforward to write
\begin{align*}
 \left \{ \sum_{\substack{i\ge 0 \\ j\ge 0}}-\sum_{\substack{i< 0 \\ j< 0}}\right \}
  (-1)^{i}q^{\frac{1}{2}i(i+m)+p^{\prime}j(pj+i)+\frac{1}{2}(\ell+1)(2pj+i)}=f_{1,p^{\prime},2pp^{\prime}}(q^{1+\frac{m+\ell}{2}},-q^{p(1+p^{\prime}+\ell)};q).
\end{align*}
For the second summand we have
{\allowdisplaybreaks \begin{align*}
& \left \{  \sum_{\substack{i\ge 0 \\ j> 0}}-\sum_{\substack{i< 0 \\ j\le 0}}\right \}
 (-1)^{i}q^{\frac{1}{2}i(i+m)+p^{\prime}j(pj+i)-\frac{1}{2}(\ell+1)(2pj+i)}\\
& = \left \{ \sum_{\substack{i\ge 0 \\ j\ge 0}}-\sum_{\substack{i< 0 \\ j< 0}}\right \}
 (-1)^{i}q^{\frac{1}{2}i(i+m)+p^{\prime}j(pj+i)-\frac{1}{2}(\ell+1)(2pj+i)}\\
 &\qquad -\sum_{\substack{i\ge 0 \\ j= 0}} (-1)^{i}q^{\frac{1}{2}i(i+m)+p^{\prime}j(pj+i)-\frac{1}{2}(\ell+1)(2pj+i)}
  -\sum_{\substack{i< 0 \\ j=0}}
 (-1)^{i}q^{\frac{1}{2}i(i+m)+p^{\prime}j(pj+i)-\frac{1}{2}(\ell+1)(2pj+i)}\\
&= f_{1,p^{\prime},2pp^{\prime}}(q^{\frac{m-\ell}{2}},-q^{p(p^{\prime}-(\ell+1))};q)
-\sum_{i\ge 0 } (-1)^{i}q^{\frac{1}{2}i(i+m)-\frac{1}{2}(\ell+1)i}
  -\sum_{i< 0 }
 (-1)^{i}q^{\frac{1}{2}i(i+m)-\frac{1}{2}(\ell+1)i}\\
&=  f_{1,p^{\prime},2pp^{\prime}}(q^{\frac{m-\ell}{2}},-q^{p(p^{\prime}-(\ell+1))};q)
  -\sum_{i\in\mathbb{Z} } (-1)^{i}q^{\frac{1}{2}i(i+m)-\frac{1}{2}(\ell+1)i}\\
&=  f_{1,p^{\prime},2pp^{\prime}}(q^{\frac{m-\ell}{2}},-q^{p(p^{\prime}-(\ell+1))};q)
 -j(q^{\frac{1}{2}(m-\ell)};q)\\
 &=  f_{1,p^{\prime},2pp^{\prime}}(q^{\frac{m-\ell}{2}},-q^{p(p^{\prime}-(\ell+1))};q),
\end{align*}}%
because $j(q^{n};q)=0$ for $n\in \mathbb{Z}$.
Thus we arrive at the desired form.
\end{proof}

 \begin{proof}[Proof of Corollary \ref{corollary:crossspinidentities}]
 We record certain specializations of functional equation properties found in \cite{HM}.  From Proposition \ref{proposition:H7eq1.14}, we have  
\begin{equation}
f_{1,5,20}(x,y;q)=-\frac{q^{26}}{xy}f_{1,5,20}(q^{7}/x,q^{45}/y;q),\label{equation:fabc-flip}
\end{equation}
and from Corollary \ref{corollary:fabc-funceqnspecial}, we have
\begin{align}
f_{1,5,20}(x,y;q) =&-yf_{1,5,20}(q^5x,q^{20}y;q)+j(x;q),\label{equation:fabc-fnq1}\\
f_{1,5,20}(x,y;q) =&-xf_{1,5,20}(qx,q^5y;q)+j(y;q^{20}).\label{equation:fabc-fnq2}
\end{align}

 We prove (\ref{eq:crossspiniden0211}).  Proposition \ref{proposition:modStringFnHeckeForm} gives
\begin{equation*}
(q)_{\infty}^3 \mathcal{C}_{1,1}^{1/2}(q)
= f_{1,5,20}(q^{2},-q^{14};q)
 -f_{1,5,20}(1,-q^{6};q)
\end{equation*}
and
\begin{equation*}
q(q)_{\infty}^3 \mathcal{C}_{0,2}^{1/2}(q)
=q f_{1,5,20}(q^{2},-q^{16};q)
 -qf_{1,5,20}(q^{-1},-q^{4};q).
\end{equation*}
We begin by using (\ref{equation:fabc-fnq2}) to obtain
\begin{align*}
(q)_{\infty}^3 \mathcal{C}_{1,1}^{1/2}(q)
&=-q^2f_{1,5,20}(q^3,-q^{19};q)+j(-q^{14};q^{20})+f_{1,5,20}(q,-q^{11};q)-j(-q^{6};q^{20})\\
&=-q^2f_{1,5,20}(q^3,-q^{19};q)+f_{1,5,20}(q,-q^{11};q).
\end{align*}
Using (\ref{equation:fabc-fnq2}) again yields
\begin{align*}
(q)_{\infty}^3 \mathcal{C}_{1,1}^{1/2}(q)
&=q^5f_{1,5,20}(q^4,-q^{24};q)-q^2j(-q^{19};q^{20})-qf_{1,5,20}(q^2,-q^{16};q)+j(-q^{11};q^{20})\\
&=q^5f_{1,5,20}(q^4,-q^{24};q)-qf_{1,5,20}(q^2,-q^{16};q)+j(q^2;q^5),
\end{align*}
where in the last equality we used the fact that
\begin{equation*}
j(x;q)=j(-qx^2;q^4)-xj(-q^3x^2;q^4).
\end{equation*}
Applying (\ref{equation:fabc-fnq1}) gives
\begin{align*}
(q)_{\infty}^3 \mathcal{C}_{1,1}^{1/2}(q)
&=qf_{1,5,20}(q^{-1},-q^{4};q)-qf_{1,5,20}(q^2,-q^{16};q)+j(q^2;q^5)\\
&=-q(q)_{\infty}^3 \mathcal{C}_{0,2}^{1/2}(q)+j(q^2;q^5).\qedhere
\end{align*}

To prove (\ref{eq:crossspiniden0013}), we first note that Proposition \ref{proposition:modStringFnHeckeForm} yields
\begin{equation*}
(q)_{\infty}^3 \mathcal{C}_{0,0}^{1/2}(q)
= f_{1,5,20}(q,-q^{12};q)
 -f_{1,5,20}(1,-q^{8};q)
\end{equation*}
and
\begin{equation*}
q(q)_{\infty}^3 \mathcal{C}_{1,3}^{1/2}(q)
=q f_{1,5,20}(q^{3},-q^{18};q)
 -qf_{1,5,20}(q^{-1},-q^{2};q).
\end{equation*}
Using (\ref{equation:fabc-fnq2}) provides
\begin{align*}
(q)_{\infty}^3 \mathcal{C}_{0,0}^{1/2}(q)
&=-qf_{1,5,20}(q^2,-q^{17};q)+j(-q^{12};q^{20})+f_{1,5,20}(q,-q^{13};q)-j(-q^{8};q^{20})\\
&=-qf_{1,5,20}(q^2,-q^{17};q)+f_{1,5,20}(q,-q^{13};q).
\end{align*}
Using (\ref{equation:fabc-fnq2}) again yields
\begin{align*}
(q)_{\infty}^3 \mathcal{C}_{0,0}^{1/2}(q)
&=q^3f_{1,5,20}(q^3,-q^{22};q)-qj(-q^{17};q^{20})-qf_{1,5,20}(q^2,-q^{18};q)+j(-q^{13};q^{20})\\
&=q^3f_{1,5,20}(q^3,-q^{22};q)-qf_{1,5,20}(q^2,-q^{18};q)+j(q;q^5).
\end{align*}
Once again, applying (\ref{equation:fabc-fnq2}) produces
\begin{align*}
(q)_{\infty}^3 &\mathcal{C}_{0,0}^{1/2}(q)\\
&=-q^6f_{1,5,20}(q^4,-q^{27};q)+q^3j(-q^{22};q^{20})+q^3f_{1,5,20}(q^3,-q^{23};q)-qj(-q^{18};q^{20})+j(q;q^5)\\
&=-q^6f_{1,5,20}(q^4,-q^{27};q)+q^3f_{1,5,20}(q^3,-q^{23};q)+j(q;q^5).
\end{align*}
where the last line follows from $j(-qx;q)=-x^{-1}j(-x;q)$.  Applying (\ref{equation:fabc-flip}) gives
\begin{align*}
(q)_{\infty}^3 \mathcal{C}_{0,0}^{1/2}(q)
&=-qf_{1,5,20}(q^{3},-q^{18};q)+q^3f_{1,5,20}(q^4,-q^{22};q)+j(q;q^5)\\
&=-qf_{1,5,20}(q^{3},-q^{18};q)+qf_{1,5,20}(q^{-1},-q^{2};q)-qj(-q^{-1};q)+j(q;q^5)\\
&=-qf_{1,5,20}(q^{3},-q^{18};q)+qf_{1,5,20}(q^{-1},-q^{2};q)+j(q;q^5).
\end{align*}
where in the penultimate line we have used (\ref{equation:fabc-fnq1}).  The result follows.\qedhere
\end{proof}


\section{The mixed mock modular transformations of $1/2$-level string functions} \label{section:mixedmockprop}

To present further results, we need to introduce the notation of Zwegers' thesis \cite[Chapter 2]{Zw02}. Let $A$ be a symmetric $r \times r$-matrix with integer coefficients, which is non-degenerate. We consider the quadratic form $Q : \C^r \rightarrow \C$, $Q(x) = \frac{1}{2} \langle x, Ax \rangle$ and the associated bilinear form $B(x, y) = \langle x, Ay\rangle = Q(x + y) - Q(x) - Q(y)$.

We assume that the largest dimension of a linear subspace of $\R^r$ on which $Q$ is negative definite is equal to $1$. Then the
set of vectors $c \in \R^r$ with $Q(c) < 0$ has two components. If $B(c_1,c_2) < 0$ then $c_1$ and $c_2$ belong to the same component, while if $B(c_1, c_2)> 0$  then $c_1$ and $c_2$ belong to opposite components. Let $C_Q$ be one of the two components. If $c_0$ is a vector in that component, then $C_Q$ is given by:
\begin{equation*}
C_Q := \{c \in \R^r \ | \ Q(c)<0, \ B(c,c_0)<0 \}.   
\end{equation*}
Let $c_1,c_2 \in C_Q$. We define the indefinite  $\vartheta$-function of $Q$ with characteristics $a,b \in \R^r$, with respect to parameters $(c_1, c_2)$ by
\begin{multline*}
\vartheta_{a,b}(\tau) = \vartheta_{a,b}^{Q,(c_1,c_2)}(\tau):= \\
\sum_{\upsilon \in a + \Z^r} \left(E\left(\frac{B(c_1,\upsilon)}{\sqrt{-Q(c_1)}} \sqrt{\im(\tau)} \right) - E\left(\frac{B(c_2,\upsilon)}{\sqrt{-Q(c_2)}} \sqrt{\im(\tau)} \right) \right) e^{2\pi i B(\upsilon,b)} q^{Q(\upsilon)},
\end{multline*}
where
\begin{equation*}
E(z) := \sgn(z)(1-\beta(z^2))
\end{equation*}
and
\begin{equation*}
\beta(x) := \int_{x}^{\infty} u^{-\frac{1}{2}} e^{\pi u} du, \ x \geq 0.
\end{equation*}
Let us define the holomorphic part of indefinite $\vartheta$-function
\begin{equation*}
\vartheta_{a,b}^{\operatorname{hol}}(\tau) = \vartheta_{a,b}^{Q,(c_1,c_2),\operatorname{hol}}(\tau) := \sum_{\upsilon \in a + \Z^r} (\sgn(B(c_1,\upsilon))-\sgn(B(c_2,\upsilon))) e^{2\pi i B(\upsilon,b)} q^{Q(\upsilon)},
\end{equation*}
and
the non-holomorphic part
\begin{equation*}
\vartheta_{a,b}^{\operatorname{nhol}}(\tau) := \vartheta_{a,b}(\tau) - \vartheta_{a,b}^{\operatorname{hol}}(\tau). 
\end{equation*}
Also let us introduce the non-holomorphic Mordell integral \cite[Proposition 1.9]{Zw02}
\begin{equation*}
R_{a,b}(\tau) := -i \int_{-\overline{\tau}}^{\infty} \frac{g_{a,-b}(z)}{\sqrt{-i(z+\tau)}} dz,
\end{equation*}
where we define a unary theta function as
\begin{equation*}
g_{a,b}(\tau) := \sum_{\zeta \in a+\Z}  \zeta e^{2\pi i \zeta b} q^{\frac{\zeta^2}{2}}.
\end{equation*}

In the next proposition we present the string functions $C_{0,0}^{(2,5)}(\tau)$ and $C_{0,2}^{(2,5)}(\tau)$ as the holomorphic parts of indefinite $\vartheta$-functions.
\begin{proposition} \label{proposition:RHSshortidentitiesasholpart}We have
\begin{equation*}
\begin{pmatrix}
\vartheta_{\begin{psmallmatrix}
    0\\
    1 \slash 10
\end{psmallmatrix},\begin{psmallmatrix}
    0\\
    1 \slash 2
\end{psmallmatrix}}\\
\vartheta_{\begin{psmallmatrix}
    0\\
    3 \slash 10
\end{psmallmatrix},\begin{psmallmatrix}
    0\\
    1 \slash 2
\end{psmallmatrix}}
\end{pmatrix}(\tau)
= \eta(\tau)^3 \cdot \begin{pmatrix}
  C^{(2,5)}_{0,0}\\
  C^{(2,5)}_{0,2}
\end{pmatrix}(\tau) 
+ R_{\frac{1}{4},0}(4\tau) \cdot \begin{pmatrix}
 \mathcal{J}_{1,5}\\
 \mathcal{J}_{2,5}
\end{pmatrix}(\tau),
\end{equation*}    
where the quadratic form of the indefinite $\vartheta$-functions is
\begin{equation*}
Q_{L}(x) :=  \frac{1}{2}x_1^2 + 5x_1x_2 + 10x_2^2 
\  \ \ \text{for} \ \ \ x=\begin{pmatrix}
 x_1\\
 x_2
\end{pmatrix} \in \R^2
\end{equation*}
and the parameters are
\begin{equation*}
c_1^{L} = \begin{pmatrix}
-4\\
1
\end{pmatrix}, \ \ \ c_2^{L} = \begin{pmatrix}
-5\\
1
\end{pmatrix}.   
\end{equation*}
\end{proposition}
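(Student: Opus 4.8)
The plan is to recognize the Hecke-type double-sums appearing in the string function formula for $(p,p')=(2,5)$, $m\in\{0,2\}$, $\ell\in\{0,2\}$ (Proposition \ref{proposition:modStringFnHeckeForm}) as holomorphic parts of indefinite $\vartheta$-functions in the sense of Zwegers, and to match the non-holomorphic completion with a single Mordell integral. First I would specialize Proposition \ref{proposition:modStringFnHeckeForm}: for $(m,\ell)=(0,0)$ and $(m,\ell)=(0,2)$ we get
\begin{equation*}
(q)_\infty^3\,\mathcal{C}_{0,\ell}^{1/2}(q) = f_{1,5,20}(q^{1+\ell/2},-q^{12+2\ell};q) - f_{1,5,20}(q^{-\ell/2},-q^{8-2\ell};q),
\end{equation*}
and since $C^{1/2}_{0,\ell} = q^{s_{\lambda,\mu}}\mathcal{C}^{1/2}_{0,\ell}$ with the appropriate exponent, multiplying by $\eta(\tau)^3 = q^{1/8}(q)_\infty^3$ shows that $\eta(\tau)^3 C^{(2,5)}_{0,\ell}(\tau)$ is, up to a rational power of $q$, the $f_{1,5,20}$-difference above. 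The key computation is then to write each $f_{1,5,20}(x,y;q)$ with these arguments as a lattice sum over $a+\Z^2$ for the quadratic form $Q_L(x)=\tfrac12 x_1^2+5x_1x_2+10x_2^2$: expanding $f_{a,b,c}$ from Definition \ref{definition:fabc-def} and collecting $q^{Q_L(\upsilon)}$ with $\upsilon=(r,s)$ or a shift thereof, one reads off the characteristics $a$ (here $(0,1/10)$ resp.\ $(0,3/10)$) and $b=(0,1/2)$, because the $q$-exponent $\tfrac12 i(i+m)+p'j(pj+i)+\dots$ from \eqref{equation:fractional-string} is exactly $Q_L$ evaluated on a shifted lattice; the alternating signs $(-1)^{r+s}$ become the $e^{2\pi i B(\upsilon,b)}$ factor with $b=(0,1/2)$.

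Next I would verify that $Q_L$ has the right signature: its matrix is $\left(\begin{smallmatrix}1&5\\5&20\end{smallmatrix}\right)$ with determinant $20-25=-5<0$, so $Q_L$ has a one-dimensional negative-definite subspace, putting us in Zwegers' setting. One checks $Q_L(c_1^L)=Q_L(-4,1)=8-20+10=-2<0$ and $Q_L(c_2^L)=Q_L(-5,1)=\tfrac{25}{2}-25+10=-\tfrac52<0$, and that $B(c_1^L,c_2^L)<0$ so they lie in the same component $C_{Q_L}$. The sign function $\sgn(B(c_1,\upsilon))-\sgn(B(c_2,\upsilon))$ in $\vartheta^{\operatorname{hol}}_{a,b}$ is supported precisely on the cone where exactly one of the two linear forms $B(c_i,\cdot)$ is positive; the point is that this cone decomposition reproduces the $\{\sum_{r,s\ge0}-\sum_{r,s<0}\}$ structure (minus the correction terms, which are the partial-theta/boundary contributions) after the substitution converting $(r,s)$ to the lattice variable. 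This is the step I expect to be the main obstacle: carefully choosing the affine substitution $\upsilon = \upsilon(r,s)$ and confirming that the Zwegers cone $\{B(c_1,\upsilon)B(c_2,\upsilon)<0\}$ matches the union of quadrants appearing in $f_{1,5,20}$ for both values of $y$ simultaneously, and that the two $f$-terms (with $y=-q^{12+2\ell}$ and $y=-q^{8-2\ell}$) combine so that their difference is exactly $\vartheta^{\operatorname{hol}}_{a,b}$ with no leftover theta terms.

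Finally I would handle the non-holomorphic part. By Zwegers' Proposition 1.9 and the general theory, $\vartheta_{a,b}^{\operatorname{nhol}}(\tau) = \vartheta_{a,b}-\vartheta_{a,b}^{\operatorname{hol}}$ is built from error-function-weighted unary theta pieces, and for a rank-two form with these $c_i$ it collapses to a product of a one-dimensional theta function $g_{a',b'}$ and a Mordell integral $R_{a',b'}$; the arithmetic of $Q_L$ restricted to the rational line spanned by $c_1^L-c_2^L=(1,0)$ produces the theta function with variable $4\tau$ (the $4$ coming from the index $[\Z:\text{projection}]$ type computation, consistent with $J_{1,5},J_{2,5}$ having modular variable $\tau$ but the Mordell integral carrying $4\tau$), matching the stated $R_{1/4,0}(4\tau)\cdot(\mathcal{J}_{1,5},\mathcal{J}_{2,5})^T(\tau)$. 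Concretely I would compute $B(c_1^L,\upsilon)$ and $B(c_2^L,\upsilon)$ explicitly as linear forms in $(x_1,x_2)$, split $\upsilon$ into a component along the fixed null-ish direction and a transverse component, and recognize the resulting single-variable error-integral as $R_{1/4,0}(4\tau)$ up to the unary theta factor $\mathcal{J}_{a,5}(\tau)$; the $\mathcal{J}_{a,5}$ comes from summing $q^{Q_L}$ over the transverse lattice direction, which is a one-dimensional positive-definite theta of level $5$. Assembling these three pieces — the double-sum $=\eta^3 C^{(2,5)}_{0,\ell}$ identification, the $\vartheta^{\operatorname{hol}}$ matching, and the $\vartheta^{\operatorname{nhol}}=R_{1/4,0}(4\tau)\mathcal{J}_{a,5}$ identification — gives the claimed identity, first for the $(0,0)$ entry with $a=(0,1/10)$ and then identically for the $(0,2)$ entry with $a=(0,3/10)$.
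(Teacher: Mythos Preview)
Your approach is essentially the paper's, but two of your expectations are inverted relative to what actually happens.

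First, the step you flag as the main obstacle --- matching the Zwegers cone to the quadrant structure of $f_{1,5,20}$ --- is in fact immediate. One computes directly $B(c_1^L,\upsilon)=\upsilon_1$ and $B(c_2^L,\upsilon)=-5\upsilon_2$, so that $\sgn(B(c_1,\upsilon))-\sgn(B(c_2,\upsilon))=\sgn(\upsilon_1)+\sgn(\upsilon_2)$, which is exactly (twice) the characteristic function of the $\{\sum_{r,s\ge0}-\sum_{r,s<0}\}$ region. Moreover, it is not the \emph{difference} of the two $f$-terms that equals a single $\vartheta^{\mathrm{hol}}_{a,b}$; rather, each $f_{1,5,20}$ individually is a $\vartheta^{\mathrm{hol}}$, with characteristics $(0,\pm 1/10)$ (resp.\ $(0,\pm 3/10)$), and the two signs are combined at the end.

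Second, the non-holomorphic side is where the real work lies, and it is more intricate than your sketch indicates. The paper uses Zwegers' Proposition~4.3 (not 1.9): for $c_1^L$ one has $\langle c_1^L\rangle^\perp_\Z=\{(0,\zeta):\zeta\in\Z\}$ and a set $P_0$ of four coset representatives, producing four terms of the form $R_{j/4,2}(4\tau)$ times a unary theta. One must then check $R_{0,2}(4\tau)=1$, $R_{1/2,2}(4\tau)=0$, and $R_{1/4,2}(4\tau)=-R_{3/4,2}(4\tau)=R_{1/4,0}(4\tau)$, and finally invoke the theta identity $\overline{J}_{7,20}-q\overline{J}_{3,20}=J_{1,5}$ to collapse the two surviving $R$-terms into $R_{1/4,0}(4\tau)\,\mathcal{J}_{1,5}(\tau)$. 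An extra pure-theta piece $\overline{\mathcal{J}}_{8,20}(\tau)$ also appears and must be accounted for when the $(0,1/10)$ and $(0,-1/10)$ pieces are assembled. The $c_2^L$ contribution vanishes because its transverse unary theta is an odd theta-null $\sum_{\zeta\in 1/2+\Z}q^{\zeta^2/2}e^{\pi i\zeta}=0$. None of this is hard, but it is not a single collapsed product as you suggest; the identity $\overline{J}_{7,20}-q\overline{J}_{3,20}=J_{1,5}$ in particular is a genuine ingredient you would otherwise miss.
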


\begin{proof}[Proof of Proposition \ref{proposition:RHSshortidentitiesasholpart}]
In this proof we use the notation $c_1=c_1^{L}$, $c_2=c_2^{L}$. It is easy to check that $Q(c_1)<0$ , $Q(c_2) < 0$ and $B(c_1,c_2)<0$. Also we can calculate
\begin{align*}
B(c_1,\upsilon) = \upsilon_1, \ 
B(c_2,\upsilon) = -5\upsilon_2,
\end{align*}
for $\upsilon = \begin{pmatrix}
    \upsilon_1\\
    \upsilon_2
\end{pmatrix} \in \R^2$. So we can calculate by definition
\begin{align*}
2q^{\frac{1}{10}}f_{1,5,20}(q,-q^{12};q) &= \vartheta_{\begin{psmallmatrix}
    0\\
    1 \slash 10
\end{psmallmatrix},\begin{psmallmatrix}
    0\\
    1 \slash 2
\end{psmallmatrix}}^{\operatorname{hol}}(\tau),\\
2q^{\frac{1}{10}}f_{1,5,20}(1,-q^{8};q) &= \vartheta_{\begin{psmallmatrix}
    0\\
    -1 \slash 10
\end{psmallmatrix},\begin{psmallmatrix}
    0\\
    1 \slash 2
\end{psmallmatrix}}^{\operatorname{hol}}(\tau),\\
2q^{\frac{9}{10}}f_{1,5,20}(q^2,-q^{16};q) &= \vartheta_{\begin{psmallmatrix}
    0\\
    3 \slash 10
\end{psmallmatrix},\begin{psmallmatrix}
    0\\
    1 \slash 2
\end{psmallmatrix}}^{\operatorname{hol}}(\tau),\\
2q^{\frac{9}{10}}f_{1,5,20}(q^{-1},-q^{4};q) &= \vartheta_{\begin{psmallmatrix}
    0\\
    -3 \slash 10
\end{psmallmatrix},\begin{psmallmatrix}
    0\\
    1 \slash 2
\end{psmallmatrix}}^{\operatorname{hol}}(\tau).
\end{align*}
Now let us consider the non-holomorphic parts. We can write
\begin{align}
\begin{split} \label{equation:nonholpartLHS1}
\vartheta_{\begin{psmallmatrix}
    0\\
    1 \slash 10
\end{psmallmatrix},\begin{psmallmatrix}
    0\\
    1 \slash 2
\end{psmallmatrix}}^{\operatorname{nhol}}  =  &-\sum_{\upsilon \in \begin{psmallmatrix}
    0\\
    1 \slash 10
\end{psmallmatrix} + \mathbb{Z}^2} \sgn(B(c_1,\upsilon)) \beta\left(-\frac{B(c_1,\upsilon)^2}{Q(c_1)}\im(\tau)\right) e^{2\pi i B\left(\upsilon,\begin{psmallmatrix}
    0\\
    1 \slash 2
\end{psmallmatrix}\right)} q^{Q(\upsilon)} \\
&+\sum_{\upsilon \in \begin{psmallmatrix}
    0\\
    1 \slash 10
\end{psmallmatrix}+ \mathbb{Z}^2} \sgn(B(c_2,\upsilon)) \beta\left(-\frac{B(c_2,\upsilon)^2}{Q(c_2)}\im(\tau)\right) e^{2\pi i B\left(\upsilon,\begin{psmallmatrix}
    0\\
    1 \slash 2
\end{psmallmatrix}\right)} q^{Q(\upsilon)}.
\end{split}
\end{align}
Using \cite[Proposition 4.3]{Zw02} we see for the first summand in \eqref{equation:nonholpartLHS1}
\begin{align*}
P_0 = \Bigg\{\begin{pmatrix}
    0\\
    1 \slash 10
\end{pmatrix},
\begin{pmatrix}
    -1\\
    1 \slash 10
\end{pmatrix},
\begin{pmatrix}
    -2\\
    1 \slash 10
\end{pmatrix},
\begin{pmatrix}
    -3\\
    1 \slash 10
\end{pmatrix}\Bigg\}, \ 
\langle c_1 \rangle^{\bot}_{\mathbb{Z}} = \Bigg\{ \begin{pmatrix}
0\\
\zeta_2
\end{pmatrix} \ \Bigg| \ \zeta_2 \in \mathbb{Z} \Bigg\},
\end{align*}
and we derive
\begin{multline*}
-\sum_{\upsilon \in \begin{psmallmatrix}
    0\\
    1 \slash 10
\end{psmallmatrix} + \mathbb{Z}^2} \sgn(B(c_1,\upsilon)) \beta\left(-\frac{B(c_1,\upsilon)^2}{Q(c_1)}\im(\tau)\right) e^{2\pi i B\left(\upsilon,\begin{psmallmatrix}
    0\\
    1 \slash 2
\end{psmallmatrix}\right)} q^{Q(\upsilon)} 
\\ =R_{0,2}(4\tau) \cdot \sum_{\zeta \in \frac{1}{10}+\mathbb{Z}} q^{10\zeta^2}
+R_{\frac{1}{4},2}(4\tau) \cdot \sum_{\zeta \in -\frac{3}{20}+\mathbb{Z}} q^{10\zeta^2}\\
+R_{\frac{1}{2},2}(4\tau) \cdot \sum_{\zeta \in -\frac{2}{5}+\mathbb{Z}} q^{10\zeta^2}
+R_{\frac{3}{4},2}(4\tau) \cdot \sum_{\zeta \in -\frac{13}{20}+\mathbb{Z}} q^{10\zeta^2}.  
\end{multline*}
Let us rewrite it using \cite[Proposition 4.2]{Zw02}
\begin{equation*}
R_{a,b}(\tau) =: i q^{-\frac{1}{2}(a-\frac{1}{2})^2} \cdot e^{-2\pi i (a-\frac{1}{2})b} \cdot R\left(\left(a-\frac{1}{2}\right)\tau+b+\frac{1}{2};\tau\right).
\end{equation*}
By applying \cite[Proposition 1.9]{Zw02} we have
\begin{align*}
R_{0,2}(4\tau) &= iq^{-\frac{1}{2}}R\left(-2\tau+\frac{5}{2};4\tau\right) = 1,\\
R_{\frac{1}{4},2}(4\tau) &= -R_{\frac{3}{4},2}(4\tau) = iq^{-\frac{1}{8}}R\left(-\tau+\frac{5}{2};4\tau\right)=R_{\frac{1}{4},0}(4\tau)  = -R_{\frac{3}{4},0}(4\tau),\\
R_{\frac{1}{2},2}(4\tau) &= i R\left(\frac{5}{2};4\tau\right) = 0.
\end{align*}
Using 
\begin{equation*}
\overline{J}_{7,20} - q\overline{J}_{3,20} = J_{1,5}    
\end{equation*}
we arrive at
\begin{multline*}
-\sum_{\upsilon \in \begin{psmallmatrix}
    0\\
    1 \slash 10
\end{psmallmatrix} + \mathbb{Z}^2} \sgn(B(c_1,\upsilon)) \beta\left(-\frac{B(c_1,\upsilon)^2}{Q(c_1)}\im(\tau)\right) e^{2\pi i B\left(\upsilon,\begin{psmallmatrix}
    0\\
    1 \slash 2
\end{psmallmatrix}\right)} q^{Q(\upsilon)} 
\\ = \overline{\mathcal{J}}_{8,20}(\tau) + R_{\frac{1}{4},0}(4\tau) \cdot \mathcal{J}_{1,5}(\tau).  
\end{multline*} 
Using \cite[Proposition 4.3]{Zw02} we see for the second summand in \eqref{equation:nonholpartLHS1}
\begin{align*}
P_0 = \Bigg\{\begin{pmatrix}
    0\\
    1 \slash 10
\end{pmatrix}\Bigg\}, \ 
\langle c_2 \rangle^{\bot}_{\mathbb{Z}} = \Bigg\{ \begin{pmatrix}
\zeta_1\\
0
\end{pmatrix} \ \Bigg| \ \zeta_1 \in \mathbb{Z} \Bigg\},
\end{align*}
and we derive
\begin{multline*}
-\sum_{\upsilon \in \begin{psmallmatrix}
    0\\
    1 \slash 10
\end{psmallmatrix} + \mathbb{Z}^2} \sgn(B(c_2,\upsilon)) \beta\left(-\frac{B(c_2,\upsilon)^2}{Q(c_2)}\im(\tau)\right) e^{2\pi i B\left(\upsilon,\begin{psmallmatrix}
    0\\
    1 \slash 2
\end{psmallmatrix}\right)} q^{Q(\upsilon)} 
\\ =-R_{\frac{1}{10},\frac{5}{2}}(5\tau) \cdot \sum_{\zeta \in \frac{1}{2}+\mathbb{Z}} q^{\frac{1}{2}\zeta^2} e^{\pi i \zeta} = 0.  
\end{multline*}
Finally we have 
\begin{equation*}
\vartheta_{\begin{psmallmatrix}
    0\\
    1 \slash 10
\end{psmallmatrix},\begin{psmallmatrix}
    0\\
    1 \slash 2
\end{psmallmatrix}}^{\operatorname{nhol}}(\tau)  = \overline{\mathcal{J}}_{8,20}(\tau) + R_{\frac{1}{4},0}(4\tau) \cdot \mathcal{J}_{1,5}(\tau).
\end{equation*}
Similarly we can find
\begin{align*}
\vartheta_{\begin{psmallmatrix}
    0\\
    -1 \slash 10
\end{psmallmatrix},\begin{psmallmatrix}
    0\\
    1 \slash 2
\end{psmallmatrix}}^{\operatorname{nhol}}(\tau)  &= \overline{\mathcal{J}}_{8,20}(\tau) - R_{\frac{1}{4},0}(4\tau) \cdot \mathcal{J}_{1,5}(\tau),\\
\vartheta_{\begin{psmallmatrix}
    0\\
    3 \slash 10
\end{psmallmatrix},\begin{psmallmatrix}
    0\\
    1 \slash 2
\end{psmallmatrix}}^{\operatorname{nhol}}(\tau)  &= \overline{\mathcal{J}}_{4,20}(\tau) + R_{\frac{1}{4},0}(4\tau) \cdot \mathcal{J}_{2,5}(\tau),\\
\vartheta_{\begin{psmallmatrix}
    0\\
    -3 \slash 10
\end{psmallmatrix},\begin{psmallmatrix}
    0\\
    1 \slash 2
\end{psmallmatrix}}^{\operatorname{nhol}}(\tau)  &= \overline{\mathcal{J}}_{4,20}(\tau) - R_{\frac{1}{4},0}(4\tau) \cdot \mathcal{J}_{2,5}(\tau).
\end{align*}
After summing up holomorphic and non-holomorphic parts we obtain the desired result.
\end{proof}

\begin{remark} \label{remark:LHStransfproperties} Note that from \cite[Corollary 2.9]{Zw02} we have the transformation properties on $\Gamma(1)$
\begin{equation*}
\begin{pmatrix}
\vartheta_{\begin{psmallmatrix}
    0\\
    1 \slash 10
\end{psmallmatrix},\begin{psmallmatrix}
    0\\
    1 \slash 2
\end{psmallmatrix}}\\
\vartheta_{\begin{psmallmatrix}
    0\\
    3 \slash 10
\end{psmallmatrix},\begin{psmallmatrix}
    0\\
    1 \slash 2
\end{psmallmatrix}}
\end{pmatrix}(\tau+1) = \begin{pmatrix}
\zeta_{10} & 0\\
0 & \zeta_{10}^{-1} \\
\end{pmatrix}
\begin{pmatrix}
\vartheta_{\begin{psmallmatrix}
    0\\
    1 \slash 10
\end{psmallmatrix},\begin{psmallmatrix}
    0\\
    1 \slash 2
\end{psmallmatrix}}\\
\vartheta_{\begin{psmallmatrix}
    0\\
    3 \slash 10
\end{psmallmatrix},\begin{psmallmatrix}
    0\\
    1 \slash 2
\end{psmallmatrix}}
\end{pmatrix}(\tau)    
\end{equation*}
and
\begin{equation*}
\begin{pmatrix}
\vartheta_{\begin{psmallmatrix}
    0\\
    1 \slash 10
\end{psmallmatrix},\begin{psmallmatrix}
    0\\
    1 \slash 2
\end{psmallmatrix}}\\
\vartheta_{\begin{psmallmatrix}
    0\\
    3 \slash 10
\end{psmallmatrix},\begin{psmallmatrix}
    0\\
    1 \slash 2
\end{psmallmatrix}}
\end{pmatrix}\left(-\frac{1}{\tau}\right) = (-i\tau) \cdot \frac{2}{\sqrt{5}}\begin{pmatrix}
\sin\left(\frac{2\pi}{5}\right) & -\sin\left(\frac{\pi}{5}\right) \\
-\sin\left(\frac{\pi}{5}\right)  & -\sin\left(\frac{2\pi}{5}\right)  \\
\end{pmatrix} \begin{pmatrix}
\vartheta_{\begin{psmallmatrix}
    0\\
    1 \slash 10
\end{psmallmatrix},\begin{psmallmatrix}
    0\\
    1 \slash 2
\end{psmallmatrix}}\\
\vartheta_{\begin{psmallmatrix}
    0\\
    3 \slash 10
\end{psmallmatrix},\begin{psmallmatrix}
    0\\
    1 \slash 2
\end{psmallmatrix}}
\end{pmatrix}(\tau).
\end{equation*}
\end{remark}

Now we are able to give a proof of Theorem \ref{theorem:mockmodtransfstrfunc25l0}.

\begin{proof}[Proof of Theorem \ref{theorem:mockmodtransfstrfunc25l0}]
From \cite[Proposition 1.3]{Zw02} and \cite[p. 6]{Zw19} one can derive the transformation properties of theta functions
\begin{equation*}
\begin{pmatrix}
\mathcal{J}_{1,5}\\
\mathcal{J}_{2,5}
\end{pmatrix}(\tau+1)  = 
\begin{pmatrix}
\zeta_{40}^{9} & 0\\
0 & \zeta_{40}\\
\end{pmatrix} \begin{pmatrix}
\mathcal{J}_{1,5}\\
\mathcal{J}_{2,5}
\end{pmatrix}(\tau)
\end{equation*}
and
\begin{equation*}
\begin{pmatrix}
\mathcal{J}_{1,5}\\
\mathcal{J}_{2,5}
\end{pmatrix}\left(-\frac{1}{\tau}\right)  =  \sqrt{-i\tau} \cdot \frac{2}{\sqrt{5}} 
\begin{pmatrix}
\sin\left(\frac{2\pi}{5}\right) & -\sin\left(\frac{\pi}{5}\right) \\
-\sin\left(\frac{\pi}{5}\right)  & -\sin\left(\frac{2\pi}{5}\right)  \\
\end{pmatrix}\begin{pmatrix}
\mathcal{J}_{1,5}\\
\mathcal{J}_{2,5}
\end{pmatrix}(\tau).
\end{equation*}
Also we can calculate \cite[Proposition 4.2]{Zw02},
\begin{multline*}
F(\tau) := R_{\frac{1}{4},0}(4\tau) = -i \int_{-4\overline{\tau}}^{i\infty} \frac{g_{\frac{1}{4},0}(w)}{\sqrt{-i(w+4\tau)}} dw = -2i \int_{-\overline{\tau}}^{i\infty}  \frac{g_{\frac{1}{4},0}(4w)}{\sqrt{-i(w+\tau)}} dw \\
= -\frac{i}{2} \int_{-\overline{\tau}}^{i\infty}  \frac{\eta(w)^3}{\sqrt{-i(w+\tau)}} dw,
\end{multline*}
where from the definition we know
\begin{equation*}
g_{\frac{1}{4},0}(4w) = \frac{1}{4} \eta(w)^3.
\end{equation*}
By a direct calculation we obtain
\begin{equation*}
F\left(\tau\right) = - \frac{i}{2} \int_{0}^{i\infty} \frac{\eta(w)^3}{\sqrt{-i(w+\tau)}} dw + \frac{1}{\sqrt{-i\tau} } F\left(-\frac{1}{\tau}\right) .   
\end{equation*}
From Proposition \ref{proposition:RHSshortidentitiesasholpart} and Remark \ref{remark:LHStransfproperties} we have the desired result.
\end{proof}


\section{Identities between Hecke-type double sums} \label{section:proveidentities}

Recall that string functions can be written in terms of Hecke-type sums,
\begin{align*}
\eta(\tau)^3 C_{0,0}^{1/2}(\tau) &= q^{\frac{1}{10}} \left(f_{1,5,20}(q,-q^{12};q)-f_{1,5,20}(1,-q^{8};q) \right),\\
\eta(\tau)^3 C_{0,2}^{1/2}(\tau) &= q^{\frac{1}{10}} \left(f_{1,5,20}(q^2,-q^{16};q)-f_{1,5,20}(q^{-1},-q^{4};q) \right).
\end{align*}

The goal of this section is to prove the the following identities.

\begin{proposition} \label{proposition:hecketypesumsidentities} We have
\begin{align}
\begin{split} \label{equation:strfunc2500short2}
\eta(\tau)^3 C_{0,0}^{1/2}(\tau) &= H_{0,0}^{\operatorname{hol}}(\tau),
\end{split}\\
\begin{split}\label{equation:strfunc2502short2}
\eta(\tau)^3 C_{0,2}^{1/2}(\tau) &= H_{0,2}^{\operatorname{hol}}(\tau),
\end{split}
\end{align}
where
\begin{align}
\begin{split}\label{equation:id1RHSmockForm}
H_{0,0}^{\operatorname{hol}}(\tau) &= q^{\frac{1}{10}} \cdot \left( f_{5,5,1}(q^4,q;q)-qf_{5,5,1}(q^6,q^3;q) \right),
\end{split}\\
\begin{split}\label{equation:id2RHSmockForm}
H_{0,2}^{\operatorname{hol}}(\tau) &= q^{\frac{1}{10}} \cdot \left( q^{-1}f_{5,5,1}(q^3,1;q)-qf_{5,5,1}(q^7,q^4;q) \right).
\end{split}
\end{align}    
\end{proposition}

At first, for the proof of Proposition \ref{proposition:hecketypesumsidentities} we need to present the right-hand side of identities \eqref{equation:strfunc2500short2} and \eqref{equation:strfunc2502short2} as the holomorphic part of indefinite $\vartheta$-functions. Let us define
\begin{align*}
H_{0,0}(\tau) &:= \frac{1}{2} \left(e^{-\frac{3\pi i}{10}}\vartheta_{\begin{psmallmatrix}
    1 \slash 20\\
    1 \slash 4
\end{psmallmatrix},\begin{psmallmatrix}
    1 \slash 10\\
    0
\end{psmallmatrix}}(\tau)-e^{-\frac{7\pi i}{10}}\vartheta_{\begin{psmallmatrix}
    9 \slash 20\\
    1 \slash 4
\end{psmallmatrix},\begin{psmallmatrix}
    1 \slash 10\\
    0
\end{psmallmatrix}}(\tau) \right),\\
H_{0,2}(\tau) &:= \frac{1}{2} \left(e^{-\frac{\pi i}{10}}\vartheta_{\begin{psmallmatrix}
    -3 \slash 20\\
    1 \slash 4
\end{psmallmatrix},\begin{psmallmatrix}
    1 \slash 10\\
    0
\end{psmallmatrix}}(\tau)-e^{-\frac{9\pi i}{10}}\vartheta_{\begin{psmallmatrix}
    13 \slash 20\\
    1 \slash 4
\end{psmallmatrix},\begin{psmallmatrix}
    1 \slash 10\\
    0
\end{psmallmatrix}}(\tau) \right),
\end{align*}
where the quadratic form of the indefinite $\vartheta$-functions is
\begin{equation*}
Q_{R}(x) := \frac{5}{2}x_1^2 + 5x_1x_2 + \frac{1}{2}x_2^2 
\  \ \ \text{for} \ \ \ x=\begin{pmatrix}
 x_1\\
 x_2
\end{pmatrix} \in \R^2
\end{equation*}
and the parameters are
\begin{equation*}
c_1^{R} = \begin{pmatrix}
-1\\
5
\end{pmatrix}, \ \ \ c_2^{R} = \begin{pmatrix}
-1\\
1
\end{pmatrix}.
\end{equation*} 
\begin{proposition} \label{proposition:LHSshortidentitiesasholpart} We have
\begin{equation*}
\begin{pmatrix}
H_{0,0}\\
H_{0,2}
\end{pmatrix}(\tau)
= 
\begin{pmatrix}
H^{\operatorname{hol}}_{0,0}\\
H^{\operatorname{hol}}_{0,2}
\end{pmatrix}(\tau) + R_{\frac{1}{4},0}(4\tau) \cdot
\begin{pmatrix}
\mathcal{J}_{1,5} \\
\mathcal{J}_{2,5}
\end{pmatrix}(\tau).
\end{equation*}    
\end{proposition}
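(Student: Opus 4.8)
The plan is to run the argument in parallel with the proof of Proposition \ref{proposition:RHSshortidentitiesasholpart}, now for the right-hand quadratic form $Q_R$, whose Gram matrix is $A_R = \left(\begin{smallmatrix} 5 & 5 \\ 5 & 1 \end{smallmatrix}\right)$, together with the parameters $c_1^R, c_2^R$. First I would record the linear-algebra data that feeds both the holomorphic splitting and the application of \cite[Proposition 4.3]{Zw02}: one checks directly that $Q_R(c_1^R) = -10 < 0$, $Q_R(c_2^R) = -2 < 0$ and $B(c_1^R, c_2^R) = -20 < 0$, so that $c_1^R$ and $c_2^R$ lie in the same component $C_{Q_R}$ and the indefinite $\vartheta$-functions are defined, and one computes $B(c_1^R, \upsilon) = 20\upsilon_1$ and $B(c_2^R, \upsilon) = -4\upsilon_2$ for $\upsilon = \left(\begin{smallmatrix}\upsilon_1 \\ \upsilon_2\end{smallmatrix}\right) \in \R^2$.

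Next I would match the holomorphic parts. For $\upsilon \in a + \Z^2$ with $a$ one of the four characteristics occurring in $H_{0,0}$ and $H_{0,2}$, the sign combination $\sgn(B(c_1^R,\upsilon)) - \sgn(B(c_2^R,\upsilon))$ reduces to $\sg(r) + \sg(s)$ (in the two cases with a negative first characteristic this needs a shift of the summation index together with the fact that $j(1;q) = 0$), and expanding $Q_R(\upsilon)$ and $B(\upsilon, b)$ with $b = \left(\begin{smallmatrix}1/10 \\ 0\end{smallmatrix}\right)$ rewrites each $\vartheta^{\operatorname{hol}}_{a,b}(\tau)$, up to an explicit phase and power of $q$, as twice a Hecke-type double-sum of shape $f_{5,5,1}$. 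Collecting the four evaluations shows that the holomorphic part of $\left(\begin{smallmatrix} H_{0,0} \\ H_{0,2}\end{smallmatrix}\right)(\tau)$ is exactly the vector of Hecke-type expressions on the right of Proposition \ref{proposition:masterHeckeIdentitiesMockForm}, hence equals $\left(\begin{smallmatrix} H^{\operatorname{hol}}_{0,0} \\ H^{\operatorname{hol}}_{0,2}\end{smallmatrix}\right)(\tau)$.

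It remains to evaluate the non-holomorphic parts and show they equal $R_{\frac14,0}(4\tau)\left(\begin{smallmatrix}\mathcal{J}_{1,5}\\\mathcal{J}_{2,5}\end{smallmatrix}\right)(\tau)$. Here I would proceed exactly as in the proof of Proposition \ref{proposition:RHSshortidentitiesasholpart}: using $E(z) - \sgn(z) = -\sgn(z)\beta(z^2)$, split each $\vartheta^{\operatorname{nhol}}_{a,b}$ into a $c_1^R$-contribution and a $c_2^R$-contribution; for the $c_1^R$-contribution apply \cite[Proposition 4.3]{Zw02} to sum over the cosets of $\langle c_1^R\rangle^{\bot}_{\Z}$, producing a finite sum of terms of shape $R_{a',b'}(4\tau)\cdot(\text{unary theta in the }\zeta\text{-direction})$; evaluate the resulting Mordell integrals $R_{a',b'}(4\tau)$ via \cite[Proposition 4.2]{Zw02} and \cite[Proposition 1.9]{Zw02} (most of them vanish, and the survivors are $\pm R_{\frac14,0}(4\tau)$); and reassemble the surviving unary theta sums into $\mathcal{J}_{1,5}$ and $\mathcal{J}_{2,5}$ using Jacobi-triple-product splittings of the type $\overline{J}_{7,20} - q\overline{J}_{3,20} = J_{1,5}$ already employed in the earlier proof. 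For the $c_2^R$-contribution one observes, as before, that $\langle c_2^R\rangle^{\bot}_{\Z}$ meets $a + \Z^2$ in a single coset on which the associated unary theta $\sum_{\zeta \in 1/2 + \Z}\zeta(\cdots)$ is zero by antisymmetry, so this contribution vanishes. Summing the holomorphic and non-holomorphic parts yields the proposition.

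I expect the last step to be the main obstacle: the bookkeeping in \cite[Proposition 4.3]{Zw02} has to be carried out afresh for the new quadratic form and the specific half-integer characteristics, and in particular one must verify that any extra holomorphic $\overline{\mathcal{J}}$-type theta term — of the kind that surfaced in the $c_1$-analysis of Proposition \ref{proposition:RHSshortidentitiesasholpart} — either does not appear here or cancels between the two characteristics entering each of $H_{0,0}$ and $H_{0,2}$; this cancellation is forced by the already-established matching of the holomorphic parts with $H^{\operatorname{hol}}_{0,0}$ and $H^{\operatorname{hol}}_{0,2}$, but it must still be exhibited term by term. The evaluation of the individual Mordell integrals $R_{a',b'}(4\tau)$ at their specific arguments is routine but is where the bulk of the work lies.
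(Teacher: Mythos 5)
Your overall strategy is exactly the paper's: verify the component conditions for $c_1^R,c_2^R$, match the holomorphic parts of the four indefinite theta functions with the $f_{5,5,1}$ double-sums of Proposition \ref{proposition:masterHeckeIdentitiesMockForm} via the sign decomposition $\sgn(20\upsilon_1)+\sgn(\upsilon_2)$, and then evaluate the non-holomorphic parts with \cite[Proposition 4.3]{Zw02} and \cite[Propositions 1.9, 4.2]{Zw02}. The preliminary data ($Q_R(c_1^R)=-10$, $Q_R(c_2^R)=-2$, $B(c_1^R,c_2^R)=-20$, $B(c_1^R,\upsilon)=20\upsilon_1$, $B(c_2^R,\upsilon)=-4\upsilon_2$) and the holomorphic matching are correct.

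However, in the non-holomorphic analysis you have the roles of $c_1^R$ and $c_2^R$ reversed, apparently by patterning the computation too closely on the $Q_L$ case of Proposition \ref{proposition:RHSshortidentitiesasholpart}. Decomposing $\upsilon$ along $c_1^R=\begin{psmallmatrix}-1\\5\end{psmallmatrix}$ gives perpendicular lattice $\langle c_1^R\rangle^{\bot}_{\Z}=\Z\cdot\begin{psmallmatrix}0\\1\end{psmallmatrix}$, on which $Q_R$ restricts to $\tfrac12\zeta^2$; for each of the four characteristics the resulting unary theta is $\sum_{\zeta\in\frac12+\Z}q^{\zeta^2/2}e^{\pi i\zeta}=0$ (antisymmetric under $\zeta\to-\zeta$), so the \emph{$c_1^R$-contribution vanishes identically} — it cannot produce the $\mathcal{J}_{a,5}$ terms, and no $\overline{J}_{7,20}-q\overline{J}_{3,20}=J_{1,5}$ splitting is available or needed. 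Conversely, $\langle c_2^R\rangle^{\bot}_{\Z}=\Z\cdot\begin{psmallmatrix}1\\0\end{psmallmatrix}$ carries the restricted form $\tfrac52\zeta^2$, and the perpendicular theta runs over $\zeta\in\pm\tfrac{3}{10}+\Z$ or $\pm\tfrac{1}{10}+\Z$; it is \emph{not} antisymmetric but equals, up to phase, $\mathcal{J}_{1,5}$ or $\mathcal{J}_{2,5}$, and the accompanying Mordell integral is $R_{\frac14,0}(4\tau)$. So the surviving term comes entirely from the $c_2^R$-piece, with $P_0$ a singleton for both vectors, and none of the anticipated extra $\overline{\mathcal{J}}$-type terms or cancellations between the two characteristics of $H_{0,0}$ (resp.\ $H_{0,2}$) actually arise on this side. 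This is a fixable bookkeeping error rather than a wrong method — carrying out \cite[Proposition 4.3]{Zw02} for $Q_R$ would force the correction — but as written the two concrete claims at the heart of your non-holomorphic step are both false and the argument would not go through literally.
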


\begin{proof}[Proof of Proposition \ref{proposition:LHSshortidentitiesasholpart}]
In this proof we use the notation $c_1^{R}=c_1$ and $c_2^{R}=c_2$. It is easy to check that $Q(c_1)<0$ , $Q(c_2) < 0$ and $B(c_1,c_2)<0$. Also we can calculate
\begin{align*}
B(c_1,\upsilon) &= 20\upsilon_1,\\ 
B(c_2,\upsilon) &= -4\upsilon_2,
\end{align*}
for $\upsilon = \begin{pmatrix}
    \upsilon_1\\
    \upsilon_2
\end{pmatrix} \in \R^2$. So we can calculate by definition
\begin{align*}
2q^{\frac{1}{10}}e^{\frac{3\pi i}{10}} \cdot f_{5,5,1}(q^4,q;q) &= \vartheta_{\begin{psmallmatrix}
    1 \slash 20\\
    1 \slash 4
\end{psmallmatrix},\begin{psmallmatrix}
    1 \slash 10\\
    0
\end{psmallmatrix}}^{\operatorname{hol}}(\tau),\\
2q^{\frac{1}{10}}e^{\frac{7\pi i}{10}} \cdot qf_{5,5,1}(q^6,q^3;q) &= \vartheta_{\begin{psmallmatrix}
    9 \slash 20\\
    1 \slash 4
\end{psmallmatrix},\begin{psmallmatrix}
    1 \slash 10\\
    0
\end{psmallmatrix}}^{\operatorname{hol}}(\tau),\\
2q^{\frac{9}{10}}e^{\frac{\pi i}{10}} \cdot q^{-1}f_{5,5,1}(q^3,1;q) &= \vartheta_{\begin{psmallmatrix}
    -3 \slash 20\\
    1 \slash 4
\end{psmallmatrix},\begin{psmallmatrix}
    1 \slash 10\\
    0
\end{psmallmatrix}}^{\operatorname{hol}}(\tau),\\
2q^{\frac{9}{10}}e^{\frac{9\pi i}{10}} \cdot qf_{5,5,1}(q^7,q^4;q) &= \vartheta_{\begin{psmallmatrix}
    13 \slash 20\\
    1 \slash 4
\end{psmallmatrix},\begin{psmallmatrix}
    1 \slash 10\\
    0
\end{psmallmatrix}}^{\operatorname{hol}}(\tau).
\end{align*}   
Now let us consider the non-holomorphic parts. We can write
\begin{multline} \label{equation:nonholpartRHS1}
\vartheta_{\begin{psmallmatrix}
    1 \slash 20\\
    1 \slash 4
\end{psmallmatrix},\begin{psmallmatrix}
    1 \slash 10\\
    0
\end{psmallmatrix}}^{\operatorname{nhol}}(\tau)\\
=  -\sum_{\upsilon \in \begin{psmallmatrix}
    1 \slash 20\\
    1 \slash 4
\end{psmallmatrix} + \mathbb{Z}^2} \sgn(B(c_1,\upsilon)) \beta\left(-\frac{B(c_1,\upsilon)^2}{Q(c_1)}\im(\tau)\right) e^{2\pi i B\left(\upsilon,\begin{psmallmatrix}
    1 \slash 10\\
    0
\end{psmallmatrix}\right)} q^{Q(\upsilon)} \\
+\sum_{\upsilon \in \begin{psmallmatrix}
    1 \slash 20\\
    1 \slash 4
\end{psmallmatrix}+ \mathbb{Z}^2} \sgn(B(c_2,\upsilon)) \beta\left(-\frac{B(c_2,\upsilon)^2}{Q(c_2)}\im(\tau)\right) e^{2\pi i B\left(\upsilon,\begin{psmallmatrix}
    1 \slash 10\\
    0
\end{psmallmatrix}\right)} q^{Q(\upsilon)}.
\end{multline}
Using \cite[Proposition 4.3]{Zw02} we see for the first summand in \eqref{equation:nonholpartRHS1}
\begin{align*}
P_0 = \Bigg\{\begin{pmatrix}
    1 \slash 20\\
    1 \slash 4
\end{pmatrix},\Bigg\}, \ 
\langle c_1 \rangle^{\bot}_{\mathbb{Z}} = \Bigg\{ \begin{pmatrix}
0\\
\zeta_2
\end{pmatrix} \ \Bigg| \ \zeta_2 \in \mathbb{Z} \Bigg\},
\end{align*}
and we derive
\begin{multline*}
-\sum_{\upsilon \in \begin{psmallmatrix}
    1 \slash 20\\
    1 \slash 4
\end{psmallmatrix} + \mathbb{Z}^2} \sgn(B(c_1,\upsilon)) \beta\left(-\frac{B(c_1,\upsilon)^2}{Q(c_1)}\im(\tau)\right) e^{2\pi i B\left(\upsilon,\begin{psmallmatrix}
    1 \slash 10\\
    0
\end{psmallmatrix}\right)} q^{Q(\upsilon)} 
\\ = R_{-\frac{1}{20},-2}(20\tau) \cdot \sum_{\zeta \in \frac{1}{2}+\mathbb{Z}} q^{ \frac{\zeta^2}{2}}e^{\pi i \zeta} = 0.
\end{multline*}
Using \cite[Proposition 4.3]{Zw02} we see for the second summand in \eqref{equation:nonholpartRHS1}
\begin{align*}
P_0 = \Bigg\{\begin{pmatrix}
    1 \slash 20\\
    1 \slash 4
\end{pmatrix}\Bigg\},\ 
\langle c_2 \rangle^{\bot}_{\mathbb{Z}} = \Bigg\{ \begin{pmatrix}
\zeta_1\\
0
\end{pmatrix} \ \Bigg| \ \zeta_1 \in \mathbb{Z} \Bigg\},
\end{align*}
and we derive
\begin{multline*}
-\sum_{\upsilon \in \begin{psmallmatrix}
    1 \slash 20\\
    1 \slash 4
\end{psmallmatrix} + \mathbb{Z}^2} \sgn(B(c_2,\upsilon)) \beta\left(-\frac{B(c_2,\upsilon)^2}{Q(c_2)}\im(\tau)\right) e^{2\pi i B\left(\upsilon,\begin{psmallmatrix}
    1 \slash 10\\
    0
\end{psmallmatrix}\right)} q^{Q(\upsilon)} 
\\ = R_{\frac{1}{4},0}(4\tau) \cdot \sum_{\zeta \in \frac{3}{10}+\mathbb{Z}} q^{\frac{5\zeta^2}{2}}e^{5\pi i \zeta} =  R_{\frac{1}{4},0}(4\tau)\cdot e^{\frac{3\pi i}{10}}\mathcal{J}_{1,5}(\tau).
\end{multline*}
Finally we have
\begin{equation*}
 \vartheta_{\begin{psmallmatrix}
    1 \slash 20\\
    1 \slash 4
\end{psmallmatrix},\begin{psmallmatrix}
    1 \slash 10\\
    0
\end{psmallmatrix}}^{\operatorname{nhol}}(\tau) =  e^{\frac{3\pi i}{10}} R_{\frac{1}{4},0}(4\tau)\cdot \mathcal{J}_{1,5}(\tau).  
\end{equation*}
Similarly we can find
\begin{align*}
 \vartheta_{\begin{psmallmatrix}
    9 \slash 20\\
    1 \slash 4
\end{psmallmatrix},\begin{psmallmatrix}
    1 \slash 10\\
    0
\end{psmallmatrix}}^{\operatorname{nhol}}(\tau) &= -e^{\frac{7\pi i}{10}} R_{\frac{1}{4},0}(4\tau) \cdot \mathcal{J}_{1,5}(\tau),\\
 \vartheta_{\begin{psmallmatrix}
    -3 \slash 20\\
    1 \slash 4
\end{psmallmatrix},\begin{psmallmatrix}
    1 \slash 10\\
    0
\end{psmallmatrix}}^{\operatorname{nhol}}(\tau) &= e^{\frac{\pi i}{10}} R_{\frac{1}{4},0}(4\tau) \cdot \mathcal{J}_{2,5}(\tau),\\
 \vartheta_{\begin{psmallmatrix}
    13 \slash 20\\
    1 \slash 4
\end{psmallmatrix},\begin{psmallmatrix}
    1 \slash 10\\
    0
\end{psmallmatrix}}^{\operatorname{nhol}}(\tau) &= -e^{\frac{9\pi i}{10}} R_{\frac{1}{4},0}(4\tau) \cdot \mathcal{J}_{2,5}(\tau).
\end{align*}  
After summing up holomorphic and non-holomorphic parts we obtain the desired result.
\end{proof}

At second, for the proof of Proposition \ref{proposition:hecketypesumsidentities} we need to derive the transformation properties of the indefinite $\vartheta$-functions $H_{0,0}(\tau)$ and $H_{0,2}(\tau)$ on $\Gamma(1)$.
\begin{proposition} \label{proposition:RHStransfproperties} We have
\begin{equation*}
\begin{pmatrix}
H_{0,0}\\
H_{0,2}
\end{pmatrix}(\tau+1) 
\\=  \begin{pmatrix}
\zeta_{10} & 0\\
0 & \zeta_{10}^{-1} \\
\end{pmatrix}
\begin{pmatrix}
H_{0,0}\\
H_{0,2}
\end{pmatrix}(\tau)
\end{equation*}
and
\begin{equation*}
\begin{pmatrix}
H_{0,0}\\
H_{0,2}
\end{pmatrix}\left(-\frac{1}{\tau}\right)
\\=  
(-i\tau) \cdot \frac{2}{\sqrt{5}}\begin{pmatrix}
\sin\left(\frac{2\pi}{5}\right) & -\sin\left(\frac{\pi}{5}\right) \\
-\sin\left(\frac{\pi}{5}\right)  & -\sin\left(\frac{2\pi}{5}\right)  \\
\end{pmatrix}\begin{pmatrix}
H_{0,0}\\
H_{0,2}
\end{pmatrix}(\tau).
\end{equation*}    
\end{proposition}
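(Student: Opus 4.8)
The plan is to compute the two transformations directly from the series definition of the indefinite $\vartheta$-functions $\vartheta_{a,b}^{Q_R,(c_1^R,c_2^R)}$, using Zwegers' transformation formulas \cite[Theorem 2.6, Corollary 2.9]{Zw02}. Since each of $H_{0,0}$ and $H_{0,2}$ is by definition a fixed linear combination of two such $\vartheta$-functions with specific characteristics, it suffices to track how the characteristics $a\in\R^2$ transform. First I would treat $\tau\mapsto\tau+1$: by the elliptic/translation property of $\vartheta_{a,b}^{Q}$, the effect is multiplication of $\vartheta_{a,b}(\tau)$ by $e^{2\pi i(Q_R(a)+\langle a,b\rangle_{\text{correction}})}$-type phase (the exact shape is $\vartheta_{a,b}(\tau+1)=e^{-2\pi i Q(a)}\vartheta_{a,b+a+\text{(shift)}}(\tau)$ in Zwegers' normalization). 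Plugging in $a=(1/20,1/4)^{T}$, $(9/20,1/4)^{T}$, etc., and $Q_R(x)=\tfrac52 x_1^2+5x_1x_2+\tfrac12 x_2^2$, one computes the phases $Q_R(1/20,1/4)$, $Q_R(9/20,1/4)$, $Q_R(-3/20,1/4)$, $Q_R(13/20,1/4)$ explicitly; combined with the external exponential prefactors $e^{-3\pi i/10}$ etc. in the definitions of $H_{0,0},H_{0,2}$, these should collapse to the scalars $\zeta_{10}$ and $\zeta_{10}^{-1}$. One must also check that the $b$-shift produced by the translation does not change the value of the relevant $\vartheta$ (it shifts $b$ by a vector that is either in $\Z^2$ or in a direction along which the theta is periodic), which is a routine verification using $B(c_i,\cdot)$ computed in the proof of Proposition \ref{proposition:LHSshortidentitiesasholpart}.

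For the $S$-transformation $\tau\mapsto -1/\tau$ I would apply \cite[Corollary 2.9]{Zw02}, which expresses $\vartheta_{a,b}^{Q,(c_1,c_2)}(-1/\tau)$ as $(-i\tau)^{r/2}/\sqrt{|\det A|}$ times a finite sum over the dual characteristics of $\vartheta_{b,-a}^{Q,(c_1,c_2)}(\tau)$-type terms, with a Gauss-sum matrix $\big(e^{-2\pi i B(\nu,\mu)}\big)_{\nu,\mu}$ as coefficients. Here $r=2$, $\det A_R=5\cdot1-25 = -20$, so $|\det A_R|=20$ and $1/\sqrt{|\det A_R|}$ combines with the factor of $\tfrac12$ in the definitions of $H_{0,0},H_{0,2}$ and with the $2\cos/\sin$ identities to give exactly $\tfrac{2}{\sqrt5}$ times the advertised sine matrix. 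The parameters $(c_1^R,c_2^R)$ are preserved under $S$ up to the same component of $C_{Q_R}$ (since $S$ acts trivially on the real vectors $c_i$), so no extra sign bookkeeping from the error functions is needed — the non-holomorphic completions transform covariantly along with the holomorphic parts. The key computation is that the finite character sum over $a+\Z^2/(A_R\Z^2)$ (which has $20$ classes) organizes into the $2\times2$ block acting on $(H_{0,0},H_{0,2})^{T}$: the off-diagonal sine terms arise because the characteristics $1/20,9/20$ versus $-3/20,13/20$ get mixed by the quadratic Gauss sum, and one recognizes the resulting exponential sums as $\tfrac{2}{\sqrt5}\sin(2\pi/5)$ and $-\tfrac{2}{\sqrt5}\sin(\pi/5)$ via the classical evaluation $\sum_{j}e^{2\pi i j^2/5}=\sqrt5$ and the relation between those Gauss-sum entries and $\sin(\pi/5),\sin(2\pi/5)$.

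The main obstacle will be the bookkeeping in the $S$-transformation: correctly enumerating the coset representatives for the dual lattice of $Q_R$, matching each dual characteristic to one of the four $\vartheta$'s appearing in $H_{0,0}$ and $H_{0,2}$ (several will vanish or coincide after using the elliptic transformation $j(q^n x;q)=\dots$, i.e.\ the $\vartheta$-periodicity), and verifying that the surviving $2\times2$ block of Gauss-sum coefficients equals the sine matrix rather than some other unitary matrix with the same absolute values. A useful consistency check, which I would include, is that this must be compatible with Proposition \ref{proposition:LHSshortidentitiesasholpart} together with Remark \ref{remark:LHStransfproperties}: since $H_{0,0}-H_{0,0}^{\operatorname{hol}}$ and $H_{0,2}-H_{0,2}^{\operatorname{hol}}$ equal $R_{1/4,0}(4\tau)$ times $\mathcal J_{1,5},\mathcal J_{2,5}$, and those theta functions already transform under $\Gamma(1)$ by precisely the sine matrix (as recorded in the proof of Theorem \ref{theorem:mockmodtransfstrfunc25l0}), the transformation of the full $H_{0,0},H_{0,2}$ is forced to have the same linear part; so the computation with $\vartheta$-functions is really a confirmation, and the $\tau+1$ scalars $\zeta_{10},\zeta_{10}^{-1}$ are pinned down by matching the $q$-expansion leading term $q^{1/10}$ against $\mathcal J_{1,5},\mathcal J_{2,5}$.
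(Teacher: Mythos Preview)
Your proposal is correct and follows essentially the same route as the paper: both the $T$- and $S$-transformations are obtained by applying \cite[Corollary 2.9]{Zw02} to the four $\vartheta$-functions with quadratic form $Q_R$, enumerating the $20$ cosets of $A^{-1}\Z^2/\Z^2$, and then using the $\vartheta$-periodicities to collapse the $20$-term sum to the stated $2\times 2$ sine matrix. The paper carries out the coset enumeration and coefficient computation explicitly rather than invoking Gauss-sum evaluations in closed form, and it does not use your consistency check via Proposition~\ref{proposition:LHSshortidentitiesasholpart}, but the substance is the same.
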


\begin{proof}[Proof of Proposition \ref{proposition:RHStransfproperties}] Using \cite[Corollary 2.9]{Zw02} it is straightforward to check the transformation property $\tau \rightarrow \tau+1$. Now let us prove the transformation property $\tau \rightarrow -\frac{1}{\tau}$. The matrix of the quadratic form $Q_R$ is
\begin{equation*}
A = \begin{pmatrix}
5 & 5 \\
5 & 1 
\end{pmatrix}.
\end{equation*}
So we have
\begin{equation*}
A^{-1}\mathbb{Z}^2 \ (\operatorname{mod}\mathbb{Z}^2)= \Bigg\{\begin{pmatrix}
\frac{a}{5}-\frac{b}{4}\\
\frac{b}{4}
\end{pmatrix}, \ \ 0\leq a \leq 4, \ \ 0\leq b \leq 3 \Bigg\}.    
\end{equation*}
We apply \cite[Corollary 2.9]{Zw02} and arrive at
\begin{equation*}
 \vartheta_{\begin{psmallmatrix}
1 \slash 10 + a \slash 5-b \slash 4\\
b \slash 4
\end{psmallmatrix}, \begin{psmallmatrix}
    1 \slash 10\\
    0
\end{psmallmatrix}}\left(-\frac{1}{\tau}\right) = \frac{\tau}{\sqrt{20}}\sum_{\substack{0\leq a' \leq 4\\0\leq b' \leq 3}} M_{a,b,a',b'} \cdot \vartheta_{\begin{psmallmatrix}
1 \slash 10 + a' \slash 5-b' \slash 4\\
b' \slash 4
\end{psmallmatrix}, \begin{psmallmatrix}
    1 \slash 10\\
    0
\end{psmallmatrix}}(\tau),
\end{equation*}
where
\begin{equation*}
M_{a,b,a',b'} := \exp \left(2\pi i \left[B\left(\begin{psmallmatrix}
1 \slash 10 + a \slash 5-b \slash 4\\
b \slash 4
\end{psmallmatrix}, \begin{psmallmatrix}
    1 \slash 10\\
    0
\end{psmallmatrix}\right)-B\left(\begin{psmallmatrix}
1 \slash 10 + a' \slash 5-b' \slash 4\\
b' \slash 4
\end{psmallmatrix}, \begin{psmallmatrix}
1 \slash 5 + a \slash 5-b \slash 4\\
b \slash 4
\end{psmallmatrix}\right) \right] \right).
\end{equation*}
So
\begin{align*}
H_{0,0}\left(-\frac{1}{\tau}\right) &= \frac{1}{2} \left(e^{-\frac{3\pi i}{10}}\vartheta_{\begin{psmallmatrix}
    1 \slash 20\\
    1 \slash 4
\end{psmallmatrix},\begin{psmallmatrix}
    1 \slash 10\\
    0
\end{psmallmatrix}}\left(-\frac{1}{\tau}\right)-e^{-\frac{7\pi i}{10}}\vartheta_{\begin{psmallmatrix}
    9 \slash 20\\
    1 \slash 4
\end{psmallmatrix},\begin{psmallmatrix}
    1 \slash 10\\
    0
\end{psmallmatrix}}\left(-\frac{1}{\tau}\right) \right)  \\
&= \frac{\tau}{\sqrt{5}} \sum_{\substack{0\leq a' \leq 4\\0\leq b' \leq 3}} \Tilde{M}_{a',b'} \cdot \vartheta_{\begin{psmallmatrix}
1 \slash 10 + a' \slash 5-b' \slash 4\\
b' \slash 4
\end{psmallmatrix}, \begin{psmallmatrix}
    1 \slash 10\\
    0
\end{psmallmatrix}}(\tau),
\end{align*}
where
\begin{equation*}
\Tilde{M}_{a',b'} = \frac{1}{4} \left( e^{-\frac{3\pi i}{10}} \cdot M_{1,1,a',b'} - e^{-\frac{7\pi i}{10}}  \cdot  M_{3,1,a',b'} \right).
\end{equation*}
Using \cite[Corollary 2.9]{Zw02} we can write
\begin{align*}
\sum_{\substack{0\leq a' \leq 4\\0\leq b' \leq 3}} \Tilde{M}_{a',b'} &\cdot \vartheta_{\begin{psmallmatrix}
1 \slash 10 + a' \slash 5-b' \slash 4\\
b' \slash 4
\end{psmallmatrix}, \begin{psmallmatrix}
    1 \slash 10\\
    0
\end{psmallmatrix}}(\tau) = \\
&\left(\Tilde{M}_{0,0} - e^{-\frac{\pi i}{5}}\Tilde{M}_{4,0}\right) \cdot \vartheta_{\begin{psmallmatrix}
1 \slash 10\\
0
\end{psmallmatrix}, \begin{psmallmatrix}
    1 \slash 10\\
    0
\end{psmallmatrix}}(\tau)+\left(\Tilde{M}_{1,0} - e^{-\frac{3\pi i}{5}}\Tilde{M}_{3,0}\right) \cdot \vartheta_{\begin{psmallmatrix}
3 \slash 10\\
0
\end{psmallmatrix}, \begin{psmallmatrix}
    1 \slash 10\\
    0
\end{psmallmatrix}}(\tau)\\
&+\Tilde{M}_{2,0} \cdot \vartheta_{\begin{psmallmatrix}
1 \slash 2\\
0
\end{psmallmatrix}, \begin{psmallmatrix}
    1 \slash 10\\
    0
\end{psmallmatrix}}(\tau)+(\Tilde{M}_{0,1}- e^{-\frac{\pi i}{5}}\Tilde{M}_{4,3})\cdot \vartheta_{\begin{psmallmatrix}
-3 \slash 20\\
1 \slash 4\\
\end{psmallmatrix}, \begin{psmallmatrix}
    1 \slash 10\\
    0
\end{psmallmatrix}}(\tau)\\
&+(\Tilde{M}_{1,1}-e^{-\frac{3\pi i}{5}} \Tilde{M}_{3,3})\cdot \vartheta_{\begin{psmallmatrix}
1 \slash 20\\
1 \slash 4\\
\end{psmallmatrix}, \begin{psmallmatrix}
    1 \slash 10\\
    0
\end{psmallmatrix}}(\tau)+(\Tilde{M}_{2,1}+ \Tilde{M}_{2,3})\cdot \vartheta_{\begin{psmallmatrix}
1 \slash 4\\
1 \slash 4\\
\end{psmallmatrix}, \begin{psmallmatrix}
    1 \slash 10\\
    0
\end{psmallmatrix}}(\tau)\\
&+(\Tilde{M}_{3,1}- e^{-\frac{7\pi i}{5}}\Tilde{M}_{1,3})\cdot \vartheta_{\begin{psmallmatrix}
9 \slash 20\\
1 \slash 4\\
\end{psmallmatrix}, \begin{psmallmatrix}
    1 \slash 10\\
    0
\end{psmallmatrix}}(\tau)+(\Tilde{M}_{4,1}-e^{-\frac{9\pi i}{5}} \Tilde{M}_{0,3})\cdot \vartheta_{\begin{psmallmatrix}
13 \slash 20\\
1 \slash 4\\
\end{psmallmatrix}, \begin{psmallmatrix}
    1 \slash 10\\
    0
\end{psmallmatrix}}(\tau)\\
&+ \left(\Tilde{M}_{0,2} - e^{-\frac{\pi i}{5}}\Tilde{M}_{4,2}\right) \cdot \vartheta_{\begin{psmallmatrix}
-1 \slash 5\\
1 \slash 2
\end{psmallmatrix}, \begin{psmallmatrix}
    1 \slash 10\\
    0
\end{psmallmatrix}}(\tau)+\left(\Tilde{M}_{1,2} - e^{-\frac{3\pi i}{5}}\Tilde{M}_{3,2}\right) \cdot \vartheta_{\begin{psmallmatrix}
-2 \slash 5\\
1 \slash 2
\end{psmallmatrix}, \begin{psmallmatrix}
    1 \slash 10\\
    0
\end{psmallmatrix}}(\tau)\\
&+\Tilde{M}_{2,2} \cdot \vartheta_{\begin{psmallmatrix}
0\\
1\slash 2
\end{psmallmatrix}, \begin{psmallmatrix}
    1 \slash 10\\
    0
\end{psmallmatrix}}(\tau).
\end{align*}
The coefficients vanish and by collecting terms we have
\begin{equation*}
\sum_{\substack{0\leq a' \leq 4\\0\leq b' \leq 3}} \Tilde{M}_{a',b'} \cdot \vartheta_{\begin{psmallmatrix}
1 \slash 10 + a' \slash 5-b' \slash 4\\
b' \slash 4
\end{psmallmatrix}, \begin{psmallmatrix}
    1 \slash 10\\
    0
\end{psmallmatrix}}(\tau) = -i \sin\left(\frac{2\pi}{5}\right)\cdot  H_{0,0}(\tau) +  i \sin\left(\frac{\pi}{5}\right)\cdot H_{0,2}(\tau).   
\end{equation*}
So we obtain the transformation property
\begin{equation*}
H_{0,0}\left(-\frac{1}{\tau}\right) = (-i\tau) \cdot \frac{2}{\sqrt{5}} \left( \sin\left(\frac{\pi}{5}\right)\cdot H_{0,0}(\tau)-\sin\left(\frac{2\pi}{5}\right)\cdot H_{0,2}(\tau)\right).  
\end{equation*}
Similarly we can obtain
\begin{equation*}
H_{0,2}\left(-\frac{1}{\tau}\right) = (-i\tau) \cdot \frac{2}{\sqrt{5}} \left( -\sin\left(\frac{2\pi}{5}\right)\cdot H_{0,0}(\tau)-\sin\left(\frac{\pi}{5}\right)\cdot H_{0,2}(\tau)\right).  \qedhere
\end{equation*}
\end{proof}

Using the previous results we are able to prove \eqref{equation:strfunc2500short2} and \eqref{equation:strfunc2502short2}. 

\begin{proof}[Proof of Proposition \ref{proposition:hecketypesumsidentities}]
Let us denote
\begin{equation*}
G(\tau) = \begin{pmatrix}
G_1\\
G_2 
\end{pmatrix}(\tau) = \eta(\tau)^3 \cdot \begin{pmatrix}
  C^{1/2}_{0,0}\\
  C^{1/2}_{0,2}
\end{pmatrix}(\tau) -  \begin{pmatrix}
  H^{\operatorname{hol}}_{0,0}\\
  H^{\operatorname{hol}}_{0,2}
\end{pmatrix}(\tau) .    
\end{equation*}
By Proposition \ref{proposition:RHSshortidentitiesasholpart} and Proposition \ref{proposition:LHSshortidentitiesasholpart} the non-holomorphic parts of the summands cancel each other and we have 
\begin{equation*}
G(\tau) = \begin{pmatrix}
G_1\\
G_2 
\end{pmatrix}(\tau) = \begin{pmatrix}
\vartheta_{\begin{psmallmatrix}
    0\\
    1 \slash 10
\end{psmallmatrix},\begin{psmallmatrix}
    0\\
    1 \slash 2
\end{psmallmatrix}}\\
\vartheta_{\begin{psmallmatrix}
    0\\
    3 \slash 10
\end{psmallmatrix},\begin{psmallmatrix}
    0\\
    1 \slash 2
\end{psmallmatrix}}
\end{pmatrix}(\tau) -  \begin{pmatrix}
  H_{0,0}\\
  H_{0,2}
\end{pmatrix}(\tau) .    
\end{equation*}
Hence by Remark \ref{remark:LHStransfproperties} and Proposition \ref{proposition:RHStransfproperties} we have the modular transformation property
\begin{equation*}
\begin{pmatrix}
G_{0,0}\\
G_{0,2}
\end{pmatrix}\left(-\frac{1}{\tau}\right)
\\=  
(-i\tau) \cdot \frac{2}{\sqrt{5}}\begin{pmatrix}
\sin\left(\frac{2\pi}{5}\right) & -\sin\left(\frac{\pi}{5}\right) \\
-\sin\left(\frac{\pi}{5}\right)  & -\sin\left(\frac{2\pi}{5}\right)  \\
\end{pmatrix}\begin{pmatrix}
G_{0,0}\\
G_{0,2}
\end{pmatrix}(\tau). 
\end{equation*}
We have that $G$ transforms on $\Gamma(1)$ with the same representation as
\begin{equation*}
\eta(\tau) \cdot \begin{pmatrix}
\mathcal{J}_{5,1}\\
\mathcal{J}_{5,2}
\end{pmatrix}(\tau),   
\end{equation*}
Using \cite[Lemma 2.1]{Biag} we derive that $G_1$ and $G_2$ are holomorphic modular forms of weight $1$ on $\Gamma := \Gamma_1(5)$ with some multiplier system. Let us show that $G(\tau)$ is holomorphic at the cusps of $\Gamma$. In the notation of \cite{FG} we can calculate
\begin{equation*}
\operatorname{ORD}(G_i,\infty,\Gamma) > 0. 
\end{equation*}
Using the transformation properties of $G$ on $\Gamma(1)$ we know for any cusp $\gamma(\infty) \ (\operatorname{mod}\Gamma)$ that
\begin{equation*}
\operatorname{ORD}(G_i,\gamma(\infty),\Gamma) = \operatorname{ORD}(G_i|_{1}\gamma,\infty,\Gamma) > \min(\operatorname{ORD}(G_1,\infty,\Gamma),\operatorname{ORD}(G_2,\infty,\Gamma)) > 0,
\end{equation*}
where we use the slash operator
\begin{equation*}
     (f |_{k} \gamma)(\tau) =   (ad-bc)^{\frac{k}{2}}(c\tau+d)^{-k}f(\gamma\tau) \ \ \text{for} \ \ \gamma = \begin{pmatrix}
        a & b\\
        c & d
    \end{pmatrix}. 
\end{equation*}
By calculating
\begin{equation*}
[\widehat{\Gamma(1)}:\widehat{\Gamma_1(5)}] =  [\widehat{\Gamma(1)}:\Gamma_1(5)] = \frac{1}{2}[\Gamma(1):\Gamma_1(5)] = 12
\end{equation*}
and applying the Valence formula \cite[Theorem 2.4]{FG} we just need to verify that $\operatorname{ORD}(G_i,\infty,\Gamma) > 1$ in a computing environment and we obtain $G_1=G_2=0$.
\end{proof}


\section{The mock theta function expression for $1/2$-level string functions} \label{section:fractionLevelPositive12}

We start by giving a sketch the proof, where in the sketch we focus on the case $\ell=0$.  The general string functions are defined in terms of $f_{1,5,20}(x,y;q)$'s, for example
\begin{equation*}
(q)_{\infty}^3 \mathcal{C}_{2k,0}^{1/2}(q)= 
F_{0}(k):=f_{1,5,20}(q^{1+k},-q^{12};q) -f_{1,5,20}(q^k,-q^{8};q),
\end{equation*}
which as pointed out in the introduction is difficult to evaluate in terms of a workable expression consisting of Appell functions and theta functions.  This leads us to find double-sum expressions of a more symmetric form, which are easier to evaluate.  For example, we will show
\begin{equation*}
 (q)_{\infty}^3 \mathcal{C}_{2k,0}^{1/2}(q) 
=q^{k}f_{5,5,1}(q,q^{4k-2};q)
+q^{3k}f_{5,5,1}(q^6,q^{4k+1};q)(k)=:G_{0}(k),
\end{equation*}
which one can express in terms of Appell functions and theta functions, see Lemma \ref{lemma:f551-expansion}, and for the general form see \cite[Theorem $1.4$]{HM}.  Showing that $F_{0}(k)=G_{0}(k)$ follows from demonstrating that both functions $F_{0}(k)$ and $G_{0}(k)$ satisfy the same recurrence relation and the same initial condition.  In particular, we will demonstrate that
\begin{equation*}
F_{0}(k)-G_{0}(k)
=q^{-4(k+1)+2}\left (F_{0}(k+1)-G_{0}(k+1) \right ) 
\end{equation*}
and that
\begin{equation*}
F_{0}(0)=G_{0}(0).
\end{equation*}

We proceed to the general proof.  For even spin $\ell$, we define the following:
\begin{align}
 (q)_{\infty}^3 \mathcal{C}_{2k,0}^{1/2}(q) 
=
F_{0}(k)
&:=f_{1,5,20}(q^{1+k},-q^{12};q) -f_{1,5,20}(q^k,-q^{8};q),\\
G_{0}(k)
&:=q^{k}f_{5,5,1}(q,q^{4k-2};q)
+q^{3k}f_{5,5,1}(q^6,q^{4k+1};q),\notag\\
 (q)_{\infty}^3 \mathcal{C}_{2k,2}^{1/2}(q)=
F_{2}(k)
& :=f_{1,5,20}(q^{2+k},-q^{16};q)-f_{1,5,20}(q^{k-1},-q^{4};q),\\
G_{2}(k)
&:=q^{k-1}f_{5,5,1}(q^3,q^{4k};q)
+q^{3k+1}f_{5,5,1}(q^{8},q^{4k+3};q).\notag
\end{align}
For odd spin $\ell$, we define
\begin{align}
 (q)_{\infty}^3 \mathcal{C}_{2k+1,1}^{1/2}(q) =
 F_{1}(k)
& :=f_{1,5,20}(q^{2+k},-q^{14};q) - f_{1,5,20}(q^k,-q^{6};q),\\
G_{1}(k)&:=q^{k}f_{5,5,1}(q^3,q^{4k+2};q)
+q^{3k+3}f_{5,5,1}(q^8,q^{4k+5};q),\notag\\
 (q)_{\infty}^3 \mathcal{C}_{2k+1,3}^{1/2}(q) =
 F_{3}(k)
& :=f_{1,5,20}(q^{3+k},-q^{18};q) - f_{1,5,20}(q^{k-1},-q^{2};q),\\
G_{3}(k)&:=q^{k-1}f_{5,5,1}(q,q^{4k};q)
 +q^{3k}f_{5,5,1}(q^6,q^{4k+3};q).\notag
\end{align}

We have two types of recurrence relations for our four families of string functions parametrized by $\ell\in\{0,1,2,3\}$.  They read
\begin{proposition} \label{proposition:recRelationSpin02}
For $\ell\in\{0,2\}$ we have the following recurrence relation:
\begin{equation}
F_{\ell}(k)-G_{\ell}(k)=q^{-4(k+1)+2}
\left (F_{\ell}(k+1) - G_{\ell}(k+1)\right ). 
\label{equation:recRelationSpin02}
\end{equation}
\end{proposition}

\begin{proposition} \label{proposition:recRelationSpin13}
For $\ell\in\{1,3\}$ we have the following recurrence relation:
\begin{equation}
F_{\ell}(k)-G_{\ell}(k)=q^{-4(k+1)}
\left ( F_{\ell}(k+1)-G_{\ell}(k+1)\right ).
\label{equation:recRelationSpin13}
\end{equation}
\end{proposition}
We will prove Propositions \ref{proposition:recRelationSpin02} and \ref{proposition:recRelationSpin13} in Section \ref{section:recRelations}.  

Note that the recurrence relations can be written in terms of the $k=0$ evaluations.

\begin{remark} For $\ell\in\{0,2\}$ we have that
\begin{equation*}
F_{\ell}(k)-G_{\ell}(k)
=q^{2k^2}\left ( F_{\ell}(0)-G_{\ell}(0)\right).
\end{equation*}
For $\ell\in\{1,3\}$ we have that
\begin{equation*}
F_{\ell}(k)-G_{\ell}(k)
=q^{2k^2+2k}\left ( F_{\ell}(0)-G_{\ell}(0)\right).
\end{equation*}
\end{remark}

We see that both types of expressions satisfy the same initial conditions:
\begin{proposition}\label{proposition:FGidk0} For $\ell\in\{ 0,1,2,3\}$ we have
\begin{equation}
F_{\ell}(0)=G_{\ell}(0).\label{equation:FGidk0}
\end{equation}
\end{proposition}
We will prove Proposition \ref{proposition:FGidk0} in Section \ref{section:initValues}.  As an immediate corollary, we have that both expressions satisfy the same recurrence relations and the same initial conditions, and are thus equal:
\begin{proposition} For $\ell\in\{ 0,1,2,3\}$ we have
\begin{equation}
F_{\ell}(k)=G_{\ell}(k).
\end{equation}
\end{proposition}

Now it is possible to compute nice compact expressions for the general expressions of the four families of string functions in terms of Appell functions and theta functions:
\begin{proposition} \label{proposition:G14-fullAppellTheta} We have the following
\begin{align}
 (q)_{\infty}^3 \mathcal{C}_{2k,0}^{1/2}(q) 
 &=q^{k}j(q;q^5)\left ( m(-q^{1+4k},-1;q^4)-q^{2k-1}m(-q^{-1+4k},-1;q^4) \right )\label{equation:G0-fullAppellTheta}\\
 &\qquad -q^{2k^2-1}\frac{J_{20}^3}{\overline{J}_{0,4}\overline{J}_{0,20}}
\sum_{d=0}^{4}q^{2d(d-1)}
\frac{j\big (q^{2+4d};q^{5}\big )  j\big (-q^{1+4d};q^{20}\big )j\big (q^{2+4d};q^{20}\big )}
{J_{1,20}j(q^{1+4d};q^{20}\big )}\notag \\
&\qquad +q^{2k^2}\frac{J_{20}^3}{\overline{J}_{0,4}\overline{J}_{0,20}}
\sum_{d=0}^{4}q^{2d(d+1)}
 \frac{j\big (q^{4+4d};q^{5}\big )  j\big (-q^{3+4d};q^{20}\big )j\big (q^{14+4d};q^{20}\big )}
{J_{11,20}j(q^{3+4d};q^{20}\big )},\notag
\end{align}

\begin{align}
 (q)_{\infty}^3 \mathcal{C}_{2k,2}^{1/2}(q)
 &=q^{k-1}j(q^2;q^5)\left ( m(-q^{1+4k},-1;q^4)-q^{2k-1}m(-q^{-1+4k},-1;q^4)\right )\label{equation:G2-fullAppellTheta}\\
 &\qquad -q^{2k^2-1}\frac{J_{20}^3}{\overline{J}_{0,4}\overline{J}_{0,20}}\sum_{d=0}^{4}q^{2d(d+1)}
\frac{j\big (q^{4+4d};q^{5}\big )  j\big (-q^{1+4d};q^{20}\big )j\big (q^{14+4d};q^{20}\big )}
{J_{7,20}j(q^{1+4d};q^{20}\big )}\notag\\
&\qquad -q^{2k^2-2}\frac{J_{20}^3}{\overline{J}_{0,4}\overline{J}_{0,20}}
\sum_{d=0}^{4}
q^{2d(d-1)} \frac{j\big (q^{1+4d};q^{5}\big )  j\big (-q^{3+4d};q^{20}\big )j\big (q^{6+4d};q^{20}\big )}
{J_{3,20}j(q^{3+4d};q^{20}\big )},\notag
\end{align}

\begin{align}
 (q)_{\infty}^3 &\mathcal{C}_{2k+1,1}^{1/2}(q) 
 \label{equation:G1-fullAppellTheta}\\
 &=q^kj(q^2;q^5)\left ( m(-q^{1+2(2k+1)},-1;q^4)-q^{(2k+1)-1}m(-q^{-1+2(2k+1)},-1;q^4)\right ) \notag\\
 &\qquad +q^{2k^2+2k-1}\frac{J_{20}^3}{\overline{J}_{0,4}\overline{J}_{0,20}}
\sum_{d=0}^{4}q^{2d(d-1)}
\frac{j\big (q^{1+4d};q^{5}\big )  j\big (-q^{3+4d};q^{20}\big )j\big (q^{6+4d};q^{20}\big )}
{J_{3,20}j(q^{3+4d};q^{20}\big )}\notag\\
&\qquad +q^{2k^2+2k}\frac{J_{20}^3}{\overline{J}_{0,4}\overline{J}_{0,20}}
\sum_{d=0}^{4}
q^{2d(d+1)}
 \frac{j\big (q^{4+4d};q^{5}\big ) j\big (-q^{1+4d};q^{20}\big )j\big (q^{14+4d};q^{20}\big )}
{J_{7,20}j(q^{1+4d};q^{20}\big )},\notag
\end{align}

\begin{align}
 (q)_{\infty}^3& \mathcal{C}_{2k+1,3}^{1/2}(q) 
 \label{equation:G3-fullAppellTheta}\\
 &=q^{k-1}j(q;q^5)\left ( m(-q^{1+2(2k+1)},-1;q^4)-q^{(2k+1)-1}m(-q^{-1+2(2k+1)},-1;q^4)\right )\notag \\
 &\qquad -q^{2k^2+2k-1}\frac{J_{20}^3}{\overline{J}_{0,4}\overline{J}_{0,20}}
\sum_{d=0}^{4}q^{2d(d+1)}
 \frac{j\big (q^{4+4d};q^{5}\big )  j\big (-q^{3+4d};q^{20}\big )j\big (q^{14+4d};q^{20}\big )}
{J_{11,20}j\big (q^{3+4d};q^{20}\big )}\notag\\
&\qquad +q^{2k^2+2k-2}\frac{J_{20}^3}{\overline{J}_{0,4}\overline{J}_{0,20}}
\sum_{d=0}^{4}q^{2d(d-1)}
 \frac{j\big (q^{2+4d};q^{5}\big )  j\big (-q^{1+4d};q^{20}\big )j\big (q^{2+4d};q^{20}\big )}
{J_{1,20}j\big (q^{1+4d};q^{20}\big)}.\notag
\end{align}
\end{proposition}
We will prove Proposition \ref{proposition:G14-fullAppellTheta} in Section \ref{section:f551Evaluations}.  The proof of Theorem \ref{theorem:fractionalLevelPlus12} is now straightforward.

\begin{proof}[Proof of Theorem \ref{theorem:fractionalLevelPlus12}]
The first step is to use Proposition \ref{proposition:genAppellToMockMu} in order to write the Appell function expressions in (\ref{equation:G0-fullAppellTheta}), (\ref{equation:G2-fullAppellTheta}), (\ref{equation:G1-fullAppellTheta}), and (\ref{equation:G3-fullAppellTheta}) in terms of $\mu(q)$.  We then rewrite the resulting collection of theta functions as as single quotient using Lemma {\ref{lemma:generalSingleQuotienEll03}} for $\ell\in\{0,3\}$ or Lemma {\ref{lemma:generalSingleQuotienEll12}} for $\ell\in\{1,2\}$.\qedhere

\end{proof}

\begin{proof}[Proof of Corollary \ref {corollary:fractionalLevelPlus12-2ndA}]   From \cite[$(3.28)$]{AM}, we have
\begin{equation}
\mu(q)+4A(-q)=\frac{J_{1}^5}{J_{2}^4}.\label{equation:muA-id}
\end{equation}
Substituting \eqref{equation:muA-id} into Theorem \ref{theorem:fractionalLevelPlus12}, we see that it suffices to prove
\begin{align*}
\frac{1}{2}j(q;q^5)\frac{J_{1}^5}{J_{2}^4}
+\frac{1}{2}\cdot \frac{J_{1}^3J_{10}^3}{J_{4}J_{5}}\cdot \frac{1}{J_{1,10}J_{8,20}}
= \frac{J_{1}^4J_{4}J_{8,20}}{J_{2}^4},\\
\frac{1}{2q}j(q^2;q^5)\frac{J_{1}^5}{J_{2}^4}
-\frac{1}{2q}\cdot \frac{J_{1}^3J_{10}^3}{J_{4}J_{5}}\cdot \frac{1}{J_{3,10}J_{4,20}}
= -\frac{J_{1}^4J_{4}J_{4,20}}{J_{2}^4},
\end{align*}
 both of which are easily shown using the methods of \cite{FG} as demonstrated in the proof of Lemma {\ref{lemma:generalSingleQuotienEll03}}.
 \end{proof}


\subsection{The recurrence relations}  \label{section:recRelations}  We prove the four families of recurrence relations found Propositions \ref{proposition:recRelationSpin02} and \ref{proposition:recRelationSpin13}.  They will immediately follow from the following lemma:
\begin{lemma}\label{lemma:recRelationSpin0213} There holds
\begin{align}
F_0(k)&=q^{-4(k+1)+2}F_0(k+1)
+q^{-k}j(q;q^5)-q^{-3k-1}j(q;q^5),
\label{equation:F0-functionalEquation}\\
G_0(k)&=q^{-4(k+1)+2}G_0(k+1) 
-q^{-3k-1}j(q;q^5) +q^{-k}j(q;q^5),
\label{equation:G0-functionalEquation}\\
F_2(k)&=q^{-4(k+1)+2}F_2(k+1)
+q^{-1-k}j(q^2;q^5)-q^{-3k-2}j(q^2;q^5),
\label{equation:F2-functionalEquation}\\
G_2(k)&=q^{-4(k+1)+2}G_2(k+1)
-q^{-3k-2}j(q^{2};q^{5}) +q^{-k-1}j(q^{2};q^{5}),
\label{equation:G2-functionalEquation}\\
F_{1}(k)&=q^{-4(k+1)}F_{1}(k+1)
+q^{-1-k}j(q^{2};q^5) -q^{-3k-3}j(q^{2};q^5),
\label{equation:F1-functionalEquation}\\
G_{1}(k)&=q^{-4(k+1)}G_{1}(k+1)
-q^{-3k-3}j(q^{2};q^{5})  +q^{-k-1}j(q^{2};q^5),
\label{equation:G1-functionalEquation}\\
F_{3}(k)&=q^{-4(k+1)}F_{3}(k+1)
+ q^{-2-k}j(q;q^5) -q^{-3k-4}j(q;q^{5}),
\label{equation:F3-functionalEquation}\\
G_{3}(k)&=q^{-4(k+1)}G_{3}(k+1) 
-q^{-3k-4}j(q;q^{5}) +q^{-k-2}j(q;q^{5})
\label{equation:G3-functionalEquation}.
\end{align}
\end{lemma}

\begin{proof} [Proof of Lemma \ref{lemma:recRelationSpin0213}]

The proofs are all very similar, so we will only demonstrate the first two identities (\ref{equation:F0-functionalEquation}) and (\ref{equation:G0-functionalEquation}).  We recall that
\begin{gather*}
F_{0}(k):=f_{1,5,20}(q^{1+k},-q^{12};q) -f_{1,5,20}(q^k,-q^{8};q),\\
G_{0}(k):=q^{k}f_{5,5,1}(q,q^{4k-2};q)+q^{3k}f_{5,5,1}(q^6,q^{4k+1};q),
\end{gather*}
and specialize Proposition \ref{proposition:f-functionaleqn} with $(\ell,k)=(1,-1)$ to get
{\allowdisplaybreaks \begin{align*}
G_0(k)&=q^{k}\Big [ (-q)(-q^{4k-2})^{-1}q^{-4}f_{5,5,1}(q,q^{4}q^{4k-2};q)
+j(q^{4k-2};q)\\
&\qquad \qquad +\sum_{m=0}^{-2}(-q^{4k-2})^mq^{\binom{m}{2}}j(q^{5m+1};q^5)\Big ] \\
&\qquad + q^{3k}\Big [ (-q^6)(-q^{4k+1})^{-1}q^{-4}f_{5,5,1}(q^6,q^{4}q^{4k+1};q)
+j(q^{4k+1};q)\\
&\qquad \qquad +\sum_{m=0}^{-2}(-q^{4k+1})^mq^{\binom{m}{2}}j(q^{5m+6};q^5)\Big ].
\end{align*}}%
Simplifying and recalling the summation convention (\ref{equation:sumconvention}) yields
\begin{align*}
G_0(k)&=q^{-3(k+1)+2}f_{5,5,1}(q,q^{4(k+1)-2};q) -q^k\sum_{m=-1}^{-1}(-q^{4k-2})^mq^{\binom{m}{2}}j(q^{5m+1};q^5) \\
&\qquad + q^{-(k+1)+2}f_{5,5,1}(q^6,q^{4(k+1)+1};q)
-q^{3k}\sum_{m=-1}^{-1}(-q^{4k+1})^mq^{\binom{m}{2}}j(q^{5m+6};q^5),
\end{align*}
or equivalently,
\begin{equation*}
G_0(k)=q^{-4(k+1)+2}G_0(k+1) -q^{-3k-1}j(q;q^5) +q^{-k}j(q;q^5),
\end{equation*}
which is (\ref{equation:G0-functionalEquation}).  Now, specializing Proposition \ref{proposition:f-functionaleqn} with $(\ell,k)=(-4,1)$ results in
{\allowdisplaybreaks \begin{align*}
F_0(k)&=(-q^{1+k})^{-4}(q^{12})q^{\binom{5}{2}-20}f_{1,5,20}(q^{1+(k+1)},-q^{12};q)\\
&\qquad +\sum_{m=0}^{-5}(-q^{1+k})^{m}q^{\binom{m}{2}}j(-q^{5m+12};q^{20})+j(q^{1+k};q)\\
&\qquad - (-q^{k})^{-4}(q^{8})q^{\binom{5}{2}-20}f_{1,5,20}(q^{(k+1)},-q^{8};q)\\
&\qquad -\sum_{m=0}^{-5}(-q^{k})^{m}q^{\binom{m}{2}}j(-q^{5m+8};q^{20})-j(q^{k};q).
\end{align*}}%
Noting that $j(q^n;q)=0$ for $n\in\mathbb{Z}$ and again recalling the summation convention (\ref{equation:sumconvention}) gives
{\allowdisplaybreaks \begin{align*}
F_0(k)&=q^{-4(k+1)+2}F_0(k+1)\\
&\qquad -\sum_{m=-4}^{-1}(-q^{1+k})^{m}q^{\binom{m}{2}}j(-q^{5m+12};q^{20})
+\sum_{m=-4}^{-1}(-q^{k})^{m}q^{\binom{m}{2}}j(-q^{5m+8};q^{20})\\
&=q^{-4(k+1)+2}F_0(k+1)\\
&\qquad -\Big [ (-q^{1+k})^{-1}q^{\binom{-1}{2}}j(-q^{-5+12};q^{20})
+(-q^{1+k})^{-2}q^{\binom{-2}{2}}j(-q^{-10+12};q^{20})\\
&\qquad \qquad (-q^{1+k})^{-3}q^{\binom{-3}{2}}j(-q^{-15+12};q^{20})
+(-q^{1+k})^{-4}q^{\binom{-4}{2}}j(-q^{-20+12};q^{20})\Big ] \\
&\qquad +\Big [ (-q^{k})^{-1}q^{\binom{-1}{2}}j(-q^{-5+8};q^{20})
+ (-q^{k})^{-2}q^{\binom{-2}{2}}j(-q^{-10+8};q^{20})\\
&\qquad \qquad + (-q^{k})^{-3}q^{\binom{-3}{2}}j(-q^{-15+8};q^{20})
+ (-q^{k})^{-4}q^{\binom{-4}{2}}j(-q^{-20+8};q^{20})
\Big ]. 
\end{align*}}%
Using (\ref{equation:j-elliptic}) and simplifying yields
{\allowdisplaybreaks \begin{align*}
F_0(k)&=q^{-4(k+1)+2}F_0(k+1)\\
&\qquad -\Big [ -q^{-k}j(-q^{7};q^{20})
+q^{-2k+1}j(-q^{2};q^{20})
-q^{-3k}j(-q^{3};q^{20})
+q^{-4k-2}j(-q^{8};q^{20})\Big ] \\
&\qquad +\Big [ -q^{-k+1}j(-q^{3};q^{20})
+ q^{-2k+1}j(-q^{2};q^{20})
 -q^{-3k-1}j(-q^{7};q^{20})
+ q^{-4k-2}j(-q^{12};q^{20})
\Big ]\\
&=q^{-4(k+1)+2}F_0(k+1)\\
&\qquad  +q^{-k}j(-q^{7};q^{20})
+q^{-3k}j(-q^{3};q^{20}) -q^{-k+1}j(-q^{3};q^{20})
 -q^{-3k-1}j(-q^{7};q^{20}).
\end{align*}}%
Using (\ref{equation:jsplit-m2}) brings us to
 \begin{equation*}
F_0(k)=q^{-4(k+1)+2}F_0(k+1)
   +q^{-k}j(q;q^5)-q^{-3k-1}j(q;q^5),
\end{equation*}
which is (\ref{equation:F0-functionalEquation}).
\end{proof}


\subsection{The initial values} \label{section:initValues}

We record the appropriate specialization of Proposition \ref{proposition:H7eq1.14}
\begin{equation}
f_{5,5,1}(x,y;q)=-\frac{q^{11}}{xy}f_{5,5,1}(q^{15}/x,q^{7}/y;q),
\label{equation:f551-fnqEqnFlip}
\end{equation} 
and the appropriate specializations of Corollary \ref{corollary:fabc-funceqnspecial}
\begin{align}
f_{5,5,1}(x,y;q)&=-yf_{5,5,1}(q^5x,qy;q)+j(x;q^5),
\label{equation:f551-fnqEqn1}\\
f_{5,5,1}(x,y;q)&=-xf_{5,5,1}(q^5x,q^5y;q)+j(y;q).
\label{equation:f551-fnqEqn2}
\end{align}

\begin{proof}[Proof of Proposition \ref{proposition:FGidk0}]
We point out that the two identities \eqref{equation:strfunc2500short2} and \eqref{equation:strfunc2502short2} found in Proposition \ref{proposition:hecketypesumsidentities} are required for all four initial conditions.  The proofs are all similar, so we will only prove (\ref{equation:FGidk0}) in the cases $\ell=0$ and $\ell=1$.  By definition and \eqref{equation:strfunc2500short2}, we have
\begin{align*}
F_{0}(0)
&=f_{1,5,20}(q,-q^{12};q) -f_{1,5,20}(q,-q^{8};q)\\
&=f_{5,5,1}(q^4,q;q)-qf_{5,5,1}(q^6,q^3;q).
\end{align*}
Using (\ref{equation:f551-fnqEqn2}) and the fact that $j(q^n;q)=0$ for $n\in\mathbb{Z}$ gives
\begin{align*}
F_{0}(0)
&=-q^4f_{5,5,1}(q^9,q^6;q)+j(q;q)+ f_{5,5,1}(q,q^{-2};q)-j(q^{-2};q)\\
&=-q^4f_{5,5,1}(q^9,q^6;q)+ f_{5,5,1}(q,q^{-2};q).
\end{align*}
Applying (\ref{equation:f551-fnqEqnFlip}) gives the result
\begin{equation*}
F_{0}(0)=f_{5,5,1}(q^6,q;q)+ f_{5,5,1}(q,q^{-2};q)=G_{0}(0).
\end{equation*}


For the case $\ell=1$, we rearrange (\ref{eq:crossspiniden0211}) to write
\begin{equation*} 
(q)_{\infty}^3 \mathcal{C}_{1,1}^{1/2}(q)
=-q(q)_{\infty}^3 \mathcal{C}_{0,2}^{1/2}(q)+j(q^2;q^5).
\end{equation*}
From the definition and (\ref{equation:strfunc2502short2}), we can write
\begin{align*}
F_{1}(0)
&=f_{1,5,20}(q^{2},-q^{14};q) - f_{1,5,20}(1,-q^{6};q)\\
&=-q\left ( q^{-1}f_{5,5,1}(q^3,1;q)+qf_{5,5,1}(q^8,q^3;q)\right ) +j(q^2;q^5)\\
&=-f_{5,5,1}(q^3,1;q)-q^2f_{5,5,1}(q^8,q^3;q) +j(q^2;q^5).
\end{align*}
Using (\ref{equation:f551-fnqEqn2}) and noting that $j(q^n;q)=0$ for $n\in\mathbb{Z}$ yields
\begin{align*}
F_{1}(0)
&=q^3f_{5,5,1}(q^8,q^5;q)-j(1;q)-q^2f_{5,5,1}(q^8,q^3;q) +j(q^2;q^5)\\
&=q^3f_{5,5,1}(q^8,q^5;q)-q^2f_{5,5,1}(q^8,q^3;q) +j(q^3;q^5).
\end{align*}
Using (\ref{equation:f551-fnqEqn1}) gives the result
\begin{equation*}
F_{1}(0)=q^3f_{5,5,1}(q^8,q^5;q)+f_{5,5,1}(q^3,q^2;q)=G_{1}(0).\qedhere
\end{equation*}
\end{proof}


\subsection{The symmetric double-sum evaluations} \label{section:f551Evaluations}  We evaluate the symmetric double-sums in terms of Appell functions and theta functions.

\begin{proof}[Proof of Proposition \ref{proposition:G14-fullAppellTheta}]

The proofs are all similar, so we will only do the details for the first two expansions.  We first prove (\ref{equation:G0-fullAppellTheta}).  We consider the form
\begin{equation*}
 (q)_{\infty}^3 \mathcal{C}_{2k,0}^{1/2}(q)
 =q^{k}f_{5,5,1}(q,q^{4k-2};q)+q^{3k}f_{5,5,1}(q^6,q^{4k+1};q),
\end{equation*}
We consider the first double-sum.  Using Lemma \ref{lemma:f551-expansion}, we have
\begin{align}
h_{5,5,1}(q,q^{4k-2},-1,-1;q)
&=j(q;q^5)m(-q^{1+4k},-1;q^4)+j(q^{4k-2};q)m(-q^{21-20k},-1;q^{20})
\notag \\
&=j(q;q^5)m(-q^{1+4k},-1;q^4),
\label{equation:G0-fullAppellThetaMsum1}
\end{align}
and
\begin{align}
h_{5,5,1}(q^6,q^{4k+1},-1,-1;q)
&=j(q^6;q^5)m(-q^{4k-1},-1;q^4)+j(q^{4k+1};q)m(-q^{11-20k},-1;q^{20})
\notag \\
&=-q^{-1}j(q;q^5)m(-q^{4k-1},-1;q^4).
\label{equation:G0-fullAppellThetaMsum2}
\end{align}
For the first double-sum, we have
\begin{align*}
f_{5,5,1}&(q,q^{4k-2};q)-h_{5,5,1}(q,q^{4k-2},-1,-1;q)\\
&=-\frac{J_{20}^3}{\overline{J}_{0,4}\overline{J}_{0,20}}
\sum_{d=0}^{4}
q^{2d(d+1)}\frac{j\big (q^{2+4d+4k};q^{5}\big )  j\big (-q^{19-4d-4k};q^{20}\big )j\big (q^{22+4d-16k};q^{20}\big )}
{j\big (q^{21-20k};q^{20})j(q^{1+4d+4k};q^{20}\big )}.
\end{align*}
The sum is over residue classes of $d$ mod $5$, so we can replace $d$ with $d-k$ and use (\ref{equation:j-elliptic}) to get
{\allowdisplaybreaks \begin{align*}
f_{5,5,1}&(q,q^{4k-2};q)-h_{5,5,1}(q,q^{4k-2},-1,-1;q)\\
&=-\frac{J_{20}^3}{\overline{J}_{0,4}\overline{J}_{0,20}}
\sum_{d=0}^{4}
q^{2(d-k)(d-k+1)}\frac{j\big (q^{2+4d};q^{5}\big )  j\big (-q^{19-4d};q^{20}\big )j\big (q^{22+4d-20k};q^{20}\big )}
{j\big (q^{21-20k};q^{20})j(q^{1+4d};q^{20}\big )}\\
&=-\frac{J_{20}^3}{\overline{J}_{0,4}\overline{J}_{0,20}}
\sum_{d=0}^{4}(-1)^{k-1}q^{-20\binom{-k+1}{2}+(k-1)(2+4d)}
(-1)^{k-1}q^{20\binom{-k+1}{2}-(k-1)}
q^{2(d-k)(d-k+1)}\\
&\qquad \qquad \times \frac{j\big (q^{2+4d};q^{5}\big )  j\big (-q^{19-4d};q^{20}\big )j\big (q^{2+4d};q^{20}\big )}
{j\big (q;q^{20})j(q^{1+4d};q^{20}\big )}\\
&=-q^{2k^2-k-1}\frac{J_{20}^3}{\overline{J}_{0,4}\overline{J}_{0,20}J_{1,20}}
\sum_{d=0}^{4}q^{2d^2-2d}
\frac{j\big (q^{2+4d};q^{5}\big )  j\big (-q^{19-4d};q^{20}\big )j\big (q^{2+4d};q^{20}\big )}
{j(q^{1+4d};q^{20}\big )}.
\end{align*}}%
Using (\ref{equation:j-flip}), we arrive at
{\allowdisplaybreaks \begin{align}
f_{5,5,1}&(q,q^{4k-2};q)-h_{5,5,1}(q,q^{4k-2},-1,-1;q)
\label{equation:G0-fullAppellThetaSum1}\\
&=-q^{2k^2-k-1}
\frac{J_{20}^3}{\overline{J}_{0,4}\overline{J}_{0,20}J_{1,20}}
\sum_{d=0}^{4}q^{2d(d-1)}
\frac{j\big (q^{2+4d};q^{5}\big )  j\big (-q^{1+4d};q^{20}\big )j\big (q^{2+4d};q^{20}\big )}
{j(q^{1+4d};q^{20}\big )}.\notag
\end{align}}%

For the second double-sum, we again replace $d$ with $d-k$, and then use (\ref{equation:j-elliptic}) and (\ref{equation:j-flip}) to produce
{\allowdisplaybreaks \begin{align*}
f_{5,5,1}&(q^6,q^{4k+1};q) - h_{5,5,1}(q^6,q^{4k+1},-1,-1;q)\\
&=-\frac{J_{20}^3}{\overline{J}_{0,4}\overline{J}_{0,20}}
\sum_{d=0}^{4}
q^{2d(d+1)}\frac{j\big (q^{5+4d+4k};q^{5}\big )  j\big (-q^{21-4d-4k};q^{20}\big )j\big (q^{10+4d-16k};q^{20}\big )}
{j\big (q^{11-20k};q^{20})j(q^{-1+4d+4k};q^{20}\big )}\\
&=-\frac{J_{20}^3}{\overline{J}_{0,4}\overline{J}_{0,20}}
\sum_{d=0}^{4}
q^{2(d-k)(d-k+1)}\frac{j\big (q^{5+4d};q^{5}\big )  j\big (-q^{21-4d};q^{20}\big )j\big (q^{10+4d-20k};q^{20}\big )}
{j\big (q^{11-20k};q^{20})j(q^{-1+4d};q^{20}\big )}\\
&=-\frac{J_{20}^3}{\overline{J}_{0,4}\overline{J}_{0,20}}
\sum_{d=0}^{4}(-1)^{k}q^{-20\binom{-k}{2}+k(10+4d)}(-1)^kq^{20\binom{-k}{2}-k(11)}
q^{2(d-k)(d-k+1)}\\
&\qquad \qquad \times \frac{j\big (q^{5+4d};q^{5}\big )  j\big (-q^{21-4d};q^{20}\big )j\big (q^{10+4d};q^{20}\big )}
{j\big (q^{11};q^{20})j(q^{-1+4d};q^{20}\big )}\\
&=-q^{2k^2-3k}\frac{J_{20}^3}{\overline{J}_{0,4}\overline{J}_{0,20}J_{11,20}}
\sum_{d=0}^{4}q^{2d(d+1)}
 \frac{j\big (q^{5+4d};q^{5}\big )  j\big (-q^{21-4d};q^{20}\big )j\big (q^{10+4d};q^{20}\big )}
{j(q^{-1+4d};q^{20}\big )}\\
&=q^{2k^2-3k}\frac{J_{20}^3}{\overline{J}_{0,4}\overline{J}_{0,20}}
\sum_{d=0}^{4}q^{2d(d-1)}
 \frac{j\big (q^{4d};q^{5}\big )  j\big (-q^{-1+4d};q^{20}\big )j\big (q^{10+4d};q^{20}\big )}
{J_{11,20}j(q^{-1+4d};q^{20}\big )}.
\end{align*}}%
Again noting that the sums are over residue classes of $d$ mod $5$, we can replace $d$ with $d+1$ to arrive at
{\allowdisplaybreaks \begin{align}
f_{5,5,1}&(q^6,q^{4k+1};q) - h_{5,5,1}(q^6,q^{4k+1},-1,-1;q)
\label{equation:G0-fullAppellThetaSum2}\\
&=q^{2k^2-3k}
\frac{J_{20}^3}{\overline{J}_{0,4}\overline{J}_{0,20}J_{11,20}}
\sum_{d=0}^{4}q^{2d(d+1)}
 \frac{j\big (q^{4+4d};q^{5}\big )  j\big (-q^{3+4d};q^{20}\big )j\big (q^{14+4d};q^{20}\big )}
{j(q^{3+4d};q^{20}\big )}.\notag
\end{align}}%
Combining (\ref{equation:G0-fullAppellThetaMsum1}), (\ref{equation:G0-fullAppellThetaMsum2}),  (\ref{equation:G0-fullAppellThetaSum1}), and (\ref{equation:G0-fullAppellThetaSum2}) gives (\ref{equation:G0-fullAppellTheta})


We prove (\ref{equation:G2-fullAppellTheta}).  We consider the form
\begin{equation*}
 (q)_{\infty}^3 \mathcal{C}_{2k,2}^{1/2}(q)
 =q^{k-1}f_{5,5,1}(q^3,q^{4k};q)+q^{3k+1}f_{5,5,1}(q^{8},q^{4k+3};q).
\end{equation*}
Using Lemma \ref{lemma:f551-expansion}, we have
\begin{align}
h_{5,5,1}(q^3,q^{4k},-1,-1;q)
&=j(q^3;q^5)m(-q^{1+4k},-1;q^4)+j(q^{4k};q)m(-q^{13-20k},-1;q^{20})
\notag \\
&=j(q^3;q^5)m(-q^{1+4k},-1;q^4)
\label{equation:G2-fullAppellThetaMsum1}
\end{align}
and
\begin{align}
h_{5,5,1}(q^{8},q^{4k+3},-1,-1;q)
&=j(q^8;q^5)m(-q^{-1+4k},-1;q^4)+j(q^{4k+3};q)m(-q^{3-20k},-1;q^{20})
\notag \\
&=-q^{-3}j(q^3;q^5)m(-q^{-1+4k},-1;q^4).
\label{equation:G2-fullAppellThetaMsum2}
\end{align}
For the first double-sum, we then have
\begin{align*}
f_{5,5,1}&(q^3,q^{4k},-1,-1;q)-h_{5,5,1}(q^3,q^{4k},-1,-1;q)\\
&=-\frac{J_{20}^3}{\overline{J}_{0,4}\overline{J}_{0,20}}\sum_{d=0}^{4}
q^{2d(d+1)}\frac{j\big (q^{4+4d+4k};q^{5}\big )  j\big (-q^{19-4d-4k};q^{20}\big )j\big (q^{14+4d-16k};q^{20}\big )}
{j\big (q^{13-20k};q^{20})j(q^{1+4d+4k};q^{20}\big )}.
\end{align*}
We note that the sum is over residue classes of $d$ mod $5$, so we can  replacing $d$ with $d-k$ and use (\ref{equation:j-elliptic}) to get
{\allowdisplaybreaks \begin{align*}
f_{5,5,1}&(q^3,q^{4k},-1,-1;q)-h_{5,5,1}(q^3,q^{4k},-1,-1;q)\\
&=-\frac{J_{20}^3}{\overline{J}_{0,4}\overline{J}_{0,20}}\sum_{d=0}^{4}
q^{2(d-k)(d-k+1)}\frac{j\big (q^{4+4d};q^{5}\big )  j\big (-q^{19-4d};q^{20}\big )j\big (q^{14+4d-20k};q^{20}\big )}
{j\big (q^{13-20k};q^{20})j(q^{1+4d};q^{20}\big )}\\
&=-\frac{J_{20}^3}{\overline{J}_{0,4}\overline{J}_{0,20}}\sum_{d=0}^{4}(-1)^{k}q^{-20\binom{-k}{2}}q^{k(14+4d)}
(-1)^{k}q^{20\binom{-k}{2}-13k}
q^{2(d-k)(d-k+1)}\\
&\qquad \qquad \times \frac{j\big (q^{4+4d};q^{5}\big )  j\big (-q^{19-4d};q^{20}\big )j\big (q^{14+4d};q^{20}\big )}
{j\big (q^{13};q^{20})j(q^{1+4d};q^{20}\big )}\\
&=-q^{2k^2-k}\frac{J_{20}^3}{\overline{J}_{0,4}\overline{J}_{0,20}J_{13,20}}\sum_{d=0}^{4}q^{2d(d+1)}
\frac{j\big (q^{4+4d};q^{5}\big )  j\big (-q^{19-4d};q^{20}\big )j\big (q^{14+4d};q^{20}\big )}
{j(q^{1+4d};q^{20}\big )}.
\end{align*}}%
We then use (\ref{equation:j-flip}) to arrive at
{\allowdisplaybreaks \begin{align}
f_{5,5,1}&(q^3,q^{4k},-1,-1;q)-h_{5,5,1}(q^3,q^{4k},-1,-1;q)
\label{equation:G2-fullAppellThetaSum1}\\
&=-q^{2k^2-k}\frac{J_{20}^3}{\overline{J}_{0,4}\overline{J}_{0,20}}\sum_{d=0}^{4}q^{2d(d+1)}
\frac{j\big (q^{4+4d};q^{5}\big )  j\big (-q^{1+4d};q^{20}\big )j\big (q^{14+4d};q^{20}\big )}
{J_{7,20}j(q^{1+4d};q^{20}\big )}.\notag 
\end{align}}%

For the other double-sum, we argue in the same way to obtain
{\allowdisplaybreaks \begin{align*}
f_{5,5,1}&(q^8,q^{4k+3},-1,-1;q)-h_{5,5,1}(q^8,q^{4k+3},-1,-1;q)\\
&=-\frac{J_{20}^3}{\overline{J}_{0,4}\overline{J}_{0,20}}\sum_{d=0}^{4}
q^{2d(d+1)}\frac{j\big (q^{7+4d+4k};q^{5}\big )  j\big (-q^{21-4d-4k};q^{20}\big )j\big (q^{2+4d-16k};q^{20}\big )}
{j\big (q^{3-20k};q^{20})j(q^{-1+4d+4k};q^{20}\big )}\\
&=-\frac{J_{20}^3}{\overline{J}_{0,4}\overline{J}_{0,20}}\sum_{d=0}^{4}
q^{2(d-k)(d-k+1)}\frac{j\big (q^{7+4d};q^{5}\big )  j\big (-q^{21-4d};q^{20}\big )j\big (q^{2+4d-20k};q^{20}\big )}
{j\big (q^{3-20k};q^{20})j(q^{-1+4d};q^{20}\big )}\\
&=-\frac{J_{20}^3}{\overline{J}_{0,4}\overline{J}_{0,20}}
\sum_{d=0}^{4}
(-1)^{k}q^{-20\binom{-k}{2}+k(2+4d)}(-1)^{k}q^{20\binom{-k}{2}-3k}
q^{2(d-k)(d-k+1)}\\
&\qquad \qquad \times \frac{j\big (q^{7+4d};q^{5}\big )  j\big (-q^{21-4d};q^{20}\big )j\big (q^{2+4d};q^{20}\big )}
{j\big (q^{3};q^{20})j(q^{-1+4d};q^{20}\big )}\\
&=-q^{2k^2-3k}\frac{J_{20}^3}{\overline{J}_{0,4}\overline{J}_{0,20}J_{3,20}}
\sum_{d=0}^{4}
q^{2d(d+1)} \frac{j\big (q^{7+4d};q^{5}\big )  j\big (-q^{21-4d};q^{20}\big )j\big (q^{2+4d};q^{20}\big )}
{j(q^{-1+4d};q^{20}\big )}.
\end{align*}}%
Using (\ref{equation:j-elliptic}) and replacing $d$ with $d+1$ yields
{\allowdisplaybreaks \begin{align*}
f_{5,5,1}&(q^8,q^{4k+3},-1,-1;q)-h_{5,5,1}(q^8,q^{4k+3},-1,-1;q)\\
&=q^{2k^2-3k}\frac{J_{20}^3}{\overline{J}_{0,4}\overline{J}_{0,20}J_{3,20}}
\sum_{d=0}^{4}
q^{2d(d-1)-2} \frac{j\big (q^{2+4d};q^{5}\big )  j\big (-q^{-1+4d};q^{20}\big )j\big (q^{2+4d};q^{20}\big )}
{j(q^{-1+4d};q^{20}\big )}\\
&=q^{2k^2-3k}\frac{J_{20}^3}{\overline{J}_{0,4}\overline{J}_{0,20}J_{3,20}}
\sum_{d=0}^{4}
q^{2d(d+1)-2} \frac{j\big (q^{6+4d};q^{5}\big )  j\big (-q^{3+4d};q^{20}\big )j\big (q^{6+4d};q^{20}\big )}
{j(q^{3+4d};q^{20}\big )}.
\end{align*}}%
Using (\ref{equation:j-elliptic}) one last time brings us to 
{\allowdisplaybreaks \begin{align}
f_{5,5,1}&(q^8,q^{4k+3},-1,-1;q)-h_{5,5,1}(q^8,q^{4k+3},-1,-1;q)
\label{equation:G2-fullAppellThetaSum2}\\
&=-q^{2k^2-3k-3}\frac{J_{20}^3}{\overline{J}_{0,4}\overline{J}_{0,20}}
\sum_{d=0}^{4}
q^{2d(d-1)} \frac{j\big (q^{1+4d};q^{5}\big )  j\big (-q^{3+4d};q^{20}\big )j\big (q^{6+4d};q^{20}\big )}
{J_{3,20}j(q^{3+4d};q^{20}\big )}.\notag
\end{align}}%
Combining (\ref{equation:G2-fullAppellThetaMsum1}), (\ref{equation:G2-fullAppellThetaMsum2}),  (\ref{equation:G2-fullAppellThetaSum1}), and (\ref{equation:G2-fullAppellThetaSum2}) gives (\ref{equation:G2-fullAppellTheta}). \qedhere
\end{proof}


\section{The false theta function expansion of negative level string functions}\label{section:generalNegativeLevel}

Proposition \ref{proposition:modStringFnHeckeForm} and Theorem \ref{theorem:negDisc} immediately give 
\begin{align*}
(q)_{\infty}^{3}\mathcal{C}_{m,\ell}^{N}(q)
&=\frac{1}{2}\sum_{t=0}^{2pp^{\prime}-1}(-1)^tq^{(\frac{m-\ell}{2})t}q^{\binom{t}{2}}\\
&\qquad \times \left ( q^{(1+\ell) t}
j(-q^{p^{\prime}t+p(p^{\prime}+\ell+1)};q^{2pp^{\prime}})
-j(-q^{p^{\prime}t+p(p^{\prime}-(\ell+1))};q^{2pp^{\prime}})\right ) \\
&\qquad \qquad \times \sum_{r\in\mathbb{Z}}\sg(r)
\left (q^{(pp^{\prime})^2N+pp^{\prime}m-tpp^{\prime}N}\right )^r
q^{-2(pp^{\prime})^2N\binom{r+1}{2}}.
\end{align*}
We write $t=2p i + k$, $0\le i \le p^{\prime}-1$, $0\le k\le 2p-1$.  Hence
\begin{equation*}
\sum_{t=0}^{2pp^{\prime}-1}
=\sum_{i=0}^{p^{\prime}-1}\sum_{k=0}^{2p-1}.
\end{equation*}
Upon substitution, we then have
{\allowdisplaybreaks \begin{align*}
(q)_{\infty}^{3}\mathcal{C}_{m,\ell}^{N}(q)
&=\frac{1}{2}\sum_{i=0}^{p^{\prime}-1}\sum_{k=0}^{2p-1}(-1)^{2pi+k}q^{(\frac{m-\ell}{2})(2pi+k)}q^{\binom{2pi+k}{2}}\\
&\qquad \times \Big ( q^{(1+\ell) (2pi+k)}j(-q^{p^{\prime}(2pi+k)+p(p^{\prime}+\ell+1)};q^{2pp^{\prime}})\\
&\qquad \qquad -j(-q^{p^{\prime}(2pi+k)+p(p^{\prime} -(\ell+1))};q^{2pp^{\prime}})\Big ) \\
&\qquad \qquad \times \sum_{r\in\mathbb{Z}}\sg(r)
\left (q^{(pp^{\prime})^2N+pp^{\prime}m-(2pi+k)pp^{\prime}N}\right )^r
q^{-2(pp^{\prime})^2N\binom{r+1}{2}}.
\end{align*}}%
Using the quasi-elliptic transformation property (\ref{equation:j-elliptic}) allows us to pull the $i$ outside of the theta function.  This gives
{\allowdisplaybreaks \begin{align*}
(q)_{\infty}^{3}\mathcal{C}_{m,\ell}^{N}(q)
&=\frac{1}{2}\sum_{k=0}^{2p-1}\sum_{i=0}^{p^{\prime}-1}(-1)^{k}
q^{(\frac{m-\ell}{2})2pi}
q^{(\frac{m-\ell}{2})k}
q^{\binom{2pi}{2}+2pik+\binom{k}{2}}\\
&\qquad \times \Big ( q^{2pi+k+\ell 2pi+\ell k}
q^{-2pp^{\prime}\binom{i}{2}-i(kp^{\prime}+p(p^{\prime}+\ell+1))}
j(-q^{p^{\prime}k+p(p^{\prime}+\ell+1)};q^{2pp^{\prime}})\\
&\qquad \qquad 
-q^{-2pp^{\prime}\binom{i}{2}-i(kp^{\prime}+p(p^{\prime}-(\ell+1)))}
j(-q^{p^{\prime}k+p(p^{\prime} -(\ell+1))};q^{2pp^{\prime}})\Big ) \\
&\qquad \qquad \times \sum_{r\in\mathbb{Z}}\sg(r)
\left (q^{(pp^{\prime})^2N+pp^{\prime}m-(2pi+k)pp^{\prime}N}\right )^r
q^{-2(pp^{\prime})^2N\binom{r+1}{2}}.
\end{align*}}%
Now we pull out the terms not dependent on $r$ or $i$.  This reads
{\allowdisplaybreaks \begin{align*}
(q)_{\infty}^{3}\mathcal{C}_{m,\ell}^{N}(q)
&=\frac{1}{2}\sum_{k=0}^{2p-1}
(-1)^{k} 
q^{(\frac{m-\ell}{2})k}q^{\binom{k}{2}}q^{(1+\ell )k}
j(-q^{p^{\prime}k+p(p^{\prime}+\ell+1)};q^{2pp^{\prime}})\\
&\qquad \qquad \times\sum_{i=0}^{p^{\prime}-1}
q^{(1+\ell )2pi}q^{(\frac{m-\ell}{2})2pi}q^{\binom{2pi}{2}+2pik} 
q^{-2pp^{\prime}\binom{i}{2}-i(kp^{\prime}+p(p^{\prime}+\ell+1))} \\
&\qquad \qquad \qquad \times \sum_{r\in\mathbb{Z}}\sg(r)
\left (q^{(pp^{\prime})^2N+pp^{\prime}m-(2pi+k)pp^{\prime}N}\right )^r
q^{-2(pp^{\prime})^2N\binom{r+1}{2}}\\
&\qquad -\frac{1}{2}\sum_{k=0}^{2p-1}
(-1)^{k}q^{(\frac{m-\ell}{2})k}q^{\binom{k}{2}}j(-q^{p^{\prime}k+p(p^{\prime} -(\ell+1))};q^{2pp^{\prime}})\\
&\qquad \qquad \times 
\sum_{i=0}^{p^{\prime}-1}
q^{(\frac{m-\ell}{2})2pi}q^{\binom{2pi}{2}+2pik} 
q^{-2pp^{\prime}\binom{i}{2}-i(kp^{\prime}+p(p^{\prime}-(\ell+1)))} \\
&\qquad \qquad \qquad \times \sum_{r\in\mathbb{Z}}\sg(r)
\left (q^{(pp^{\prime})^2N+pp^{\prime}m-(2pi+k)pp^{\prime}N}\right )^r
q^{-2(pp^{\prime})^2N\binom{r+1}{2}}.
\end{align*}}%
We then regroup terms in the exponents according to $R:=p^{\prime}r+i$ and recall that $N=\frac{p^{\prime}}{p}-2$, so $pN=p^{\prime}-2p$.  This brings us to a nice mid-way point:
{\allowdisplaybreaks \begin{align}
(q)_{\infty}^{3}\mathcal{C}_{m,\ell}^{N}(q)
&=-\frac{1}{2}\sum_{k=0}^{2p-1}
(-1)^{k}q^{(\frac{m-\ell}{2})k}q^{\binom{k}{2}}
\label{equation:genFalseThmMidWayPoint}\\
&\qquad \times \left ( j(-q^{p^{\prime}k+p(p^{\prime} -(\ell+1))};q^{2pp^{\prime}})
-q^{(1+\ell)k}
j(-q^{p^{\prime}k+p(p^{\prime}+\ell+1)};q^{2pp^{\prime}})\right)
\notag \\ 
&\qquad \times
\sum_{R\in\mathbb{Z}}\sg(R)
q^{-p^2NR^2+pR(m-kN)}.\notag
\end{align}}%


We work on rewriting (\ref{equation:genFalseThmMidWayPoint}).  We first show that for $k=0$ and $k=p$ that the theta expression vanishes.  For $k=0$, we use (\ref{equation:j-flip}) to see
\begin{align*}
& j(-q^{p^{\prime}k+p(p^{\prime} -(\ell+1))};q^{2pp^{\prime}})
-q^{(1+\ell)k}
j(-q^{p^{\prime}k+p(p^{\prime}+\ell+1)};q^{2pp^{\prime}})\\
&\qquad \to j(-q^{p(p^{\prime} -(\ell+1))};q^{2pp^{\prime}})
-j(-q^{p(p^{\prime}+\ell+1)};q^{2pp^{\prime}})\\
&\qquad \qquad = j(-q^{p(p^{\prime} +(\ell+1))};q^{2pp^{\prime}})
-j(-q^{p(p^{\prime}+\ell+1)};q^{2pp^{\prime}})=0.
\end{align*}
For $k=p$, we use (\ref{equation:j-flip}) and (\ref{equation:j-elliptic}) to get
\begin{align*}
& j(-q^{p^{\prime}k+p(p^{\prime} -(\ell+1))};q^{2pp^{\prime}})
-q^{(1+\ell)k}
j(-q^{p^{\prime}k+p(p^{\prime}+\ell+1)};q^{2pp^{\prime}})\\
&\to j(-q^{2pp^{\prime} -p(\ell+1)};q^{2pp^{\prime}})
-q^{(1+\ell)p}
j(-q^{2pp^{\prime}+p(\ell+1)};q^{2pp^{\prime}})\\
&\qquad = j(-q^{p(\ell+1)};q^{2pp^{\prime}})
-q^{(1+\ell)p}q^{-p(1+\ell)}
j(-q^{p(\ell+1)};q^{2pp^{\prime}})=0.
\end{align*}
Hence we can rewrite (\ref{equation:genFalseThmMidWayPoint}) as
{\allowdisplaybreaks \begin{align}
(q)_{\infty}^{3}\mathcal{C}_{m,\ell}^{N}(q)
&=-\frac{1}{2}\left ( \sum_{k=1}^{p-1}+\sum_{k=p+1}^{2p-1}\right ) 
(-1)^{k}q^{(\frac{m-\ell}{2})k}q^{\binom{k}{2}}
\label{equation:genFalseThmMidWayPoint2}\\
&\qquad \times \left ( j(-q^{p^{\prime}k+p(p^{\prime} -(\ell+1))};q^{2pp^{\prime}})
-q^{(1+\ell)k}
j(-q^{p^{\prime}k+p(p^{\prime}+\ell+1)};q^{2pp^{\prime}})\right)
\notag\\ 
&\qquad \times
\sum_{R\in\mathbb{Z}}\sg(R)
q^{-p^2NR^2+pR(m-kN)}.\notag
\end{align}}%
Let us focus on the second sum, which we will denote by
\begin{align*}
A(m,\ell,p,p^{\prime})
&:=\sum_{k=p+1}^{2p-1}(-1)^{k}
q^{(\frac{m-\ell}{2})k}q^{\binom{k}{2}}\\
&\qquad \times \left ( j(-q^{p^{\prime}k+p(p^{\prime} -(\ell+1))};q^{2pp^{\prime}})
-q^{(1+\ell)k}
j(-q^{p^{\prime}k+p(p^{\prime}+\ell+1)};q^{2pp^{\prime}})\right) \\ 
&\qquad \times
\sum_{R\in\mathbb{Z}}\sg(R)
q^{-p^2NR^2+pR(m-kN)}.
\end{align*}
We replace $k$ with $2p-k$ and then apply (\ref{equation:j-elliptic}).  This yields
{\allowdisplaybreaks \begin{align*}
A(m,\ell,p,p^{\prime}) &=\sum_{k=1}^{p-1} 
(-1)^{2p-k}q^{(\frac{m-\ell}{2})(2p-k)}q^{\binom{2p-k}{2}}\\
&\qquad \times \left ( j(-q^{2pp^{\prime}-p^{\prime}k+p(p^{\prime} -(\ell+1))};q^{2pp^{\prime}})
-q^{(1+\ell)(2p-k)}
j(-q^{2pp^{\prime}-p^{\prime}k+p(p^{\prime}+\ell+1)};q^{2pp^{\prime}})\right) \\ 
&\qquad \times
\sum_{R\in\mathbb{Z}}\sg(R)
q^{-p^2NR^2+pR(m-(2p-k)N)}\\
&=\sum_{k=1}^{p-1} 
(-1)^{2p-k}q^{(\frac{m-\ell}{2})(2p-k)}q^{\binom{2p-k}{2}}\\
&\qquad \times \Big ( 
q^{p^{\prime}k-p(p^{\prime} -(\ell+1))}
j(-q^{2pp^{\prime}-p^{\prime}k+p(p^{\prime} -(\ell+1))}; q^{2pp^{\prime}})\\
&\qquad \qquad -q^{(1+\ell)(2p-k)}
q^{p^{\prime}k-p(p^{\prime} +(\ell+1))}
j(-q^{2pp^{\prime} -p^{\prime}k+p(p^{\prime} +\ell+1)}; q^{2pp^{\prime}})\Big) \\ 
&\qquad \times
\sum_{R\in\mathbb{Z}}\sg(R)
q^{-p^2NR^2+pR(m-(2p-k)N)}.
\end{align*}}%
We then pull out a common factor and use (\ref{equation:j-flip}) to get 
{\allowdisplaybreaks \begin{align*}
A(m,\ell,p,p^{\prime}) 
&=\sum_{k=1}^{p-1} 
(-1)^{k}q^{(\frac{m-\ell}{2})(2p-k)}q^{\binom{2p-k}{2}}
q^{(1+\ell)(2p-k)}q^{p^{\prime}k-p(p^{\prime}+\ell+1)}\\
&\qquad \times \Big ( 
q^{(1+\ell)k}
j(-q^{p^{\prime}k+p(p^{\prime} +(\ell+1))};q^{2pp^{\prime}})
-j(-q^{p^{\prime}k+p(p^{\prime}-(\ell+1))};q^{2pp^{\prime}})\Big) \\ 
&\qquad \times
\sum_{R\in\mathbb{Z}}\sg(R)
q^{-p^2NR^2+pR(m-(2p-k)N)}.
\end{align*}}%
Rewriting the exponents and reordering the terms brings us to the final form:
\begin{align}
A(m,\ell,p,p^{\prime})  
&=-\sum_{k=1}^{p-1} 
(-1)^{k}q^{(\frac{m-\ell}{2})k}q^{\binom{k}{2}}
q^{(k-p)(p^{\prime}-m-2p)}
\label{equation:AfunFinalForm}\\
&\qquad \times \Big ( 
j(-q^{p^{\prime}k+p(p^{\prime}-\ell+1)};q^{2pp^{\prime}})-q^{(1+\ell)k}
j(-q^{p^{\prime}k+p(p^{\prime} +(\ell+1))};q^{2pp^{\prime}})
\Big)
\notag\\ 
&\qquad \times
\sum_{R\in\mathbb{Z}}\sg(R)
q^{-p^2NR^2+pR(m-(2p-k)N)}.
\notag
\end{align}
Substituting (\ref{equation:AfunFinalForm}) into (\ref{equation:genFalseThmMidWayPoint2}) and recalling that $pN=p^{\prime}-2p$ gives the result:
\begin{align*}
(q)_{\infty}^{3}&\mathcal{C}_{m,\ell}^{N}(q)\\
&=- \frac{1}{2}\sum_{k=1}^{p-1} 
(-1)^{k}q^{(\frac{m-\ell}{2})k}q^{\binom{k}{2}}
\left ( j(-q^{p^{\prime}k+p(p^{\prime} -(\ell+1))};q^{2pp^{\prime}})
-q^{(1+\ell)k}
j(-q^{p^{\prime}k+p(p^{\prime}+\ell+1)};q^{2pp^{\prime}})\right) \\ 
&\qquad \times
\left ( \sum_{R\in\mathbb{Z}}\sg(R)
q^{-p^2NR^2+pR(m-kN)}
-q^{(k-p)(pN-m)}
\sum_{R\in\mathbb{Z}}\sg(R)
q^{-p^2NR^2+pR(m-(2p-k)N)}
\right ).
\end{align*}

\section{The false theta function expansion of $(-1/2)$-level string functions}\label{section:fractionalLevelMinus12}

We set $(p,p^{\prime})=(2,3)$, so $N=-1/2$.  Here $\ell\in\{0,1\}$ and $m\equiv \ell \pmod 2$.  Theorem \ref{theorem:genNegativeLevelExpansion} immediately gives

\begin{align*}
(q)_{\infty}^{3}\mathcal{C}_{m,\ell}^{-1/2}(q)
&=\frac{1}{2}q^{(\frac{m-\ell}{2})}
\left ( j(-q^{7 -2\ell};q^{12})
-q^{(1+\ell)}
j(-q^{11+2\ell};q^{12})\right) \\ 
&\qquad \times
\left ( \sum_{R\in\mathbb{Z}}\sg(R)
q^{2R^2+R(2m+1)}
-q^{1+m}
\sum_{R\in\mathbb{Z}}\sg(R)
q^{2R^2+R(2m+3)}
\right ).
\end{align*}
Using (\ref{equation:j-flip}) and then (\ref{equation:jsplit-m2}) yields
\begin{align*}
(q)_{\infty}^{3}\mathcal{C}_{m,\ell}^{-1/2}(q)
&=\frac{1}{2}q^{(\frac{m-\ell}{2})}
\left ( j(-q^{5 +2\ell};q^{12})
-q^{(1+\ell)}
j(-q^{11+2\ell};q^{12})\right) \\ 
&\qquad \times
\left ( \sum_{R\in\mathbb{Z}}\sg(R)
q^{2R^2+R(2m+1)}
-q^{1+m}
\sum_{R\in\mathbb{Z}}\sg(R)
q^{2R^2+R(2m+3)}
\right )\\
&=\frac{1}{2}q^{(\frac{m-\ell}{2})}
j(q^{1+\ell};q^3) \\ 
&\qquad \times
\left ( \sum_{R\in\mathbb{Z}}\sg(R)
q^{\frac{1}{2}2R(2R+2m+1)}
-\sum_{R\in\mathbb{Z}}\sg(R)
q^{\frac{1}{2}(2R+1)(2R+2m+2)}
\right ).
\end{align*}
Which is just
\begin{align*}
(q)_{\infty}^{3}\mathcal{C}_{m,\ell}^{-1/2}(q)
&=\frac{1}{2}q^{(\frac{m-\ell}{2})}
j(q^{1+\ell};q^3)
\sum_{i\in\mathbb{Z}}\sg(i)(-1)^{i}
q^{\frac{1}{2}i(i+2m+1)}.
\end{align*}

We recall that $\ell \in \{ 0,1 \}$.  Thus
 \begin{equation*}
 j(q^{1+\ell};q^3)=j(q;q^3)=j(q^2;q^3)=(q;q^3)_{\infty}(q^2;q^3)_{\infty}(q^3;q^3)_{\infty}=(q;q)_{\infty},
 \end{equation*}
and we arrive at
  \begin{equation*}
 \mathcal{C}_{m,\ell}^{-1/2}(q)
 =\frac{1}{2}\frac{q^{\frac{1}{2}(m-\ell)}}{(q)_{\infty}^2} 
   \sum_{i\in\mathbb{Z}}\sg(i)(-1)^iq^{\frac{1}{2}i(i+2m+1)}.
 \end{equation*}
 
We still have some more work to do.  We write
  {\allowdisplaybreaks \begin{align*}
 \mathcal{C}_{m,\ell}^{-1/2}(q)
 &=\frac{1}{2}\frac{q^{\frac{1}{2}(m-\ell)}}{(q)_{\infty}^2} 
 \left (   \sum_{i\ge0 } (-1)^iq^{\frac{1}{2}i(i+2m+1)} -  \sum_{i<0} (-1)^iq^{\frac{1}{2}i(i+2m+1)}  \right )\\
&=\frac{1}{2}\frac{q^{\frac{1}{2}(m-\ell)}}{(q)_{\infty}^2} 
 \left (   2\sum_{i\ge0 } (-1)^iq^{\frac{1}{2}i(i+2m+1)} 
 - \sum_{i\in\mathbb{Z}} (-1)^iq^{\frac{1}{2}i(i+2m+1)}  \right ).
 \end{align*}}%
Using the Jacobi triple product identity (\ref{equation:JTPid}) and noting that $j(q^n;q)=0$ for all $n\in\mathbb{Z}$ gives the result:
{\allowdisplaybreaks \begin{align*}
 \mathcal{C}_{m,\ell}^{-1/2}(q)
&=\frac{1}{2}\frac{q^{\frac{1}{2}(m-\ell)}}{(q)_{\infty}^2} 
 \left (   2\sum_{i\ge0 } (-1)^iq^{\frac{1}{2}i(i+2m+1)} 
 - \sum_{i\in\mathbb{Z}} (-1)^iq^{\frac{1}{2}i(i-1)+i(m+1)}  \right )\\
&= \frac{1}{2}\frac{q^{\frac{1}{2}(m-\ell)}}{(q)_{\infty}^2} 
 \left (   2\sum_{i\ge0 } (-1)^iq^{\frac{1}{2}i(i+2m+1)} 
 - j(q^{m+1};q) \right )\\
&= \frac{q^{\frac{1}{2}(m-\ell)}}{(q)_{\infty}^2} 
  \sum_{i\ge0 } (-1)^iq^{\frac{1}{2}i(i+2m+1)}. 
\end{align*}}%


\section{The false theta function expansion of $(-2/3)$-level string functions}\label{section:fractionalLevelMinus34}
We set $(p,p^{\prime})=(3,4)$, $N=-2/3$.  Theorem \ref{theorem:genNegativeLevelExpansion} immediately gives
\begin{align*}
(q)_{\infty}^{3}\mathcal{C}_{m,\ell}^{-2/3}(q)
&=- \frac{1}{2}\sum_{k=1}^{2} 
(-1)^{k}q^{(\frac{m-\ell}{2})k}q^{\binom{k}{2}}
\left ( j(-q^{4k+9 -3\ell};q^{24})
-q^{(1+\ell)k}
j(-q^{4k+15+3\ell};q^{24})\right) \\ 
&\qquad \times
\left ( \sum_{R\in\mathbb{Z}}\sg(R)
q^{6R^2+R(3m+2k)}
-q^{(k-3)(-2-m)}
\sum_{R\in\mathbb{Z}}\sg(R)
q^{6R^2+R(3m+12-2k)}
\right ).
\end{align*}
Expanding the sum over $k$ gives
\begin{align*}
(q)_{\infty}^{3}\mathcal{C}_{m,\ell}^{-2/3}(q)
&=\frac{1}{2}q^{(\frac{m-\ell}{2})}
\left ( j(-q^{13 -3\ell};q^{24})
-q^{(1+\ell)}
j(-q^{19+3\ell};q^{24})\right) \\ 
&\qquad \times
\left ( \sum_{R\in\mathbb{Z}}\sg(R)
q^{12R^2+R(3m+2)}
-q^{2(2+m)}
\sum_{R\in\mathbb{Z}}\sg(R)
q^{6R^2+R(3m+10)}
\right )\\
&\qquad -  \frac{1}{2}q^{(m-\ell)+1}
\left ( j(-q^{17 -3\ell};q^{24})
-q^{(1+\ell)2}
j(-q^{23+3\ell};q^{24})\right) \\ 
&\qquad \times
\left ( \sum_{R\in\mathbb{Z}}\sg(R)
q^{6R^2+R(3m+4)}
-q^{2+m}
\sum_{R\in\mathbb{Z}}\sg(R)
q^{12R^2+R(3m+8)}
\right ).
\end{align*}
We rewrite the two theta function differences in order to use the quintuple product identity.  For the first difference, we use (\ref{equation:j-flip}) and then (\ref{equation:quintuple}) to get
\begin{align*}
j(-q^{13 -3\ell};q^{24})-q^{(1+\ell)}j(-q^{19+3\ell};q^{24})
&=j(-q^{11 +3\ell};q^{24})-q^{(1+\ell)}j(-q^{19+3\ell};q^{24})\\
&=\frac{j(q^{1+\ell};q^{8})j(q^{10+2\ell};q^{16})}{J_{16}}.
\end{align*}
Similarly, using (\ref{equation:j-elliptic}) and then (\ref{equation:j-flip}) yields
\begin{align*}
j(-q^{17 -3\ell};q^{24})
-q^{(1+\ell)2}j(-q^{23+3\ell};q^{24})
&=j(-q^{17 -3\ell};q^{24})
-q^{3-\ell}j(-q^{-1+3\ell};q^{24})\\
&=j(-q^{17 -3\ell};q^{24})
-q^{3-\ell}j(-q^{25-3\ell};q^{24}).
\end{align*}
The quintuple product identity and then (\ref{equation:j-flip}) gives
\begin{equation*}
j(-q^{17 -3\ell};q^{24})
-q^{(1+\ell)2}j(-q^{23+3\ell};q^{24})
=\frac{j(q^{3-\ell};q^{8}j(q^{14-2\ell};q^{16}))}{J_{16}}
=\frac{j(q^{5+\ell};q^{8})j(q^{2+2\ell};q^{16})}{J_{16}}.
\end{equation*}
Substituting back in, we obtain our result:
\begin{align*}
(q)_{\infty}^{3}\mathcal{C}_{m,\ell}^{-2/3}(q)
&=q^{(\frac{m-\ell}{2})}
\frac{j(q^{1+\ell};q^{8})j(q^{10+2\ell};q^{16})}{J_{16}} \\ 
&\qquad \times
\left ( \sum_{R\in\mathbb{Z}}\sg(R)
q^{6R^2+R(3m+2)}
-q^{2(2+m)}
\sum_{R\in\mathbb{Z}}\sg(R)
q^{6R^2+R(3m+10)}
\right )\\
&\qquad -  q^{(m-\ell)+1}
\frac{j(q^{5+\ell};q^{8})j(q^{2+2\ell};q^{16})}{J_{16}} \\ 
&\qquad \times
\left ( \sum_{R\in\mathbb{Z}}\sg(R)
q^{6R^2+R(3m+4)}
-q^{2+m}
\sum_{R\in\mathbb{Z}}\sg(R)
q^{6R^2+R(3m+8)}
\right ).
\end{align*}

\section*{Acknowledgements}
This work was supported  by the Ministry of Science and Higher Education of the Russian Federation (agreement no. 075-15-2022-287). The first author was also supported in part by the M{\"o}bius Contest Foundation for Young Scientists.

\end{document}